\newtheorem{lemma}{Lemma}
\newtheorem{theorem}{Theorem}
\newtheorem{corollary}{Corollary}
\newtheorem{definition}{Definition}
\title{Spectral Radius of $\{0, 1\}$-Tensor with Prescribed Number of Ones}
\author{
Shuliang Bai \thanks{University of South Carolina, Columbia, SC 29208,
({\tt sbai@math.sc.edu}).} \and 
Linyuan Lu
\thanks{University of South Carolina, Columbia, SC 29208,
({\tt lu@math.sc.edu}). This author was supported in part by NSF
grant DMS 1600811 and ONR grant N00014-17-1-2842.} }
\begin{document}
\maketitle

\begin{abstract}
  For any $r$-order $\{0, 1\}$-tensor $A$ with $e$ ones,
we prove that the spectral radius of $A$ is at most $e^{\frac{r-1}{r}}$
with the equality holds if and only if $e={k^r}$ for some integer $k$ and
all ones  forms a principal sub-tensor ${\bf 1}_{k\times  \cdots \times k}$.
We also prove a stability result for general tensor $A$ with $e$ ones where
$e=k^r+l$ with relatively small $l$. Using the stability result,
we completely characterized 
the tensors achieving the maximum spectral radius
among all
$r$-order $\{0, 1\}$-tensor $A$ with $k^r+l$ ones, for $-r-1\leq l \leq r$,
and  $k$ sufficiently large. 
\end{abstract}

\section{Introduction}
For a real nonnegative square matrix $A$ the spectral radius $\rho(A)$
is the largest eigenvalue of $A$ in modulus,
which is real as guaranteed by
the Perron-Frobenius theorem. The problem of finding the maximal spectral radius for
all $\{0,1\}$-matrices with prescribed
number of ones was introduced by Brualdi and Hoffman \cite{Brualdi} in 1985.
Let $g(e)$ be the maximal spectral radius of $A$ among all $\{0,1\}$-matrices 
$A$ with $e$ ones. They proved that for each positive integer $k$, 
$g(k^2)=g(k^2+1)=k$.
When $e=k^2$, the equality holds if $A$ is essentially a
$k\times k$ all-$1$-matrix (inserted by possibly extra rows/columns of $0$'s).  
When $e=k^2+1$ and $k\geq 3$, the equality is attained for only
when a useless additional $1$ is put at any place else to a $k\times k$ all-$1$-matrix.
(But for $k=1$, or $2$, there is another
$A$ with $\rho(A)=k$.)  Friedland \cite{Friedland} solved another cases
when $e=k^2-1$, $e=k^2-4$,
or $e=k^2+l$ for a fixed $l$ and $k$ sufficiently large. In all cases, the matrices with maximum spectral radius are characterized.

In this paper, we consider a similar problem for 
$\{0,1\}$-tensor (of order $r>2$) with
a fixed number of $1$'s. We ask which tensor attains the maximum spectral radius.

An $n$-dimension $r$-order tensor $A$ in real field $\mathbb{R}$ is 
a multi-dimensional array consisting of $n^r$ entries: 
$$a_{i_1\cdots i_r}\in \mathbb{R},~ \mbox{ where indexes }
i_1,i_2,\ldots, i_r \mbox{ ranges from 1 to n}.$$
 $A$ is called {\em nonnegative} if every element $a_{i_1\cdots i_r}\geq 0$;
 it is called {\em symmetric} if its entries are invariant under any permutation of their indices, i.e. $a_{i_1\cdots i_r}=a_{i_{\sigma(1)}\cdots i_{\sigma(r)}}$ for all $\sigma\in \mathfrak{S}_r$, where $\mathfrak{S}_r$ is a symmetric group on $[r]$.   For every $i \in [n]$, the {\em $i$th slice $A_i$} is an sub-tensor of $A$
 consisting of all elements  $a_{ii_2\cdots i_r}$ with the first index
 being fixed to $i$.

 For a tensor $A$ of order $r\geq 2$ and dimension $n \geq 2$,
a pair $(\lambda, \mathbf{x})\in \mathbb{C}\times (\mathbb{C}^n\setminus\{\mathbf{0}\})$ is called an {\em eigenvalue} and an {\em eigenvector} of $A$, if they satisfy $$A\mathbf{x}^{r-1}=\lambda \mathbf{x}^{[r-1]}~~~~\text{where}~~~~\mathbf{x}^{[r-1]}=(x_1^{r-1},\ldots, x_n^{r-1})^T. $$ 
 That is, for all $i=1,2,\ldots, n$,
 \begin{align}\label{eigenvalue}
 \sum\limits_{i_2,...,i_r=1}^{n} a_{ ii_2 \cdots i_r} x_{i_2} \cdots x_{i_r}=\lambda x_{i}^{r-1}.
 \end{align}
 
 The spectral radius $\rho(A)$ is defined to be the largest modulus of 
 eigenvalues of $A$. $$\rho(A)=\max \{|\lambda| :\text{ $\lambda$ is an eigenvalue of $A$} \}.$$

If $\mathbf{x}$ is a real eigenvector of $A$, clearly the corresponding
eigenvalue $\lambda$ is also real. In this case, $\mathbf{x}$ is called an $H$-eigenvector and $\lambda$  an $H$-eigenvalue.
Furthermore, if $\mathbf{x}\in\mathbb{R}^n_+$,
where $\mathbb{R}^n_+ = \{x \in \mathbb{R}^n : x \geq 0\}$, 
then $\lambda$ is an $H^+$-eigenvalue of $A$. 
If $\mathbf{x}\in \mathbb{R}^n_{++}$, 
where $\mathbb{R}^n_{+ +}= \{x \in \mathbb{R}^n : x > 0\}$,
then $\lambda$ is said to be
an $H^{++}$-eigenvalue of $A$.

The classical Perron-Frobenius theorem for matrix has been generalized to nonnegative tensors:
\begin{theorem}\label{propertiesfornongegative}
\textbf{(Perron-Frobenius theorem for nonnegative tensors)}
\begin{enumerate} 
\item (Yang and Yang 2010 \cite{YangYang}) If $A$ is nonnegative tensor of order $r$ and dimension $n$, 
then the spectral radius $\rho(A)$ is an $H^+$-eigenvalue of $A$. 
\item (Frieland, Gaubert and Han 2011 \cite{FriedlandGaubetHan}) If furthermore $A$ is weakly irreducible, 
then  $\rho(A)$ is the unique  $H^{++}$-eigenvalue of $A$, 
with the unique eigenvector $\mathbf{x}\in \mathbb{R}^n_{++}$, up to a positive scaling coefficient.
\item (Chang, Pearson and Zhang 2008 \cite{ChangPearsonZhang}) If moreover $A$ is irreducible, then  $\rho(A)$ is the unique  $H^{+}$-eigenvalue of $A$, 
with the unique eigenvector $\mathbf{x}\in \mathbb{R}^n_{+}$, up to a positive scaling coefficient.
\end{enumerate}
\end{theorem}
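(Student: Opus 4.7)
Since the three parts are attributed to three different papers, I would treat them in sequence, building up from the strictly positive case to the fully nonnegative case. The overall strategy is to replace the linear-algebraic spectral theory (which breaks down for $r>2$ because the eigen-equation $A\mathbf{x}^{r-1}=\lambda \mathbf{x}^{[r-1]}$ is no longer linear in $\mathbf{x}$) with a nonlinear fixed-point / variational formulation on the positive cone.

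For part (1), my plan is to first treat the case where every entry $a_{i_1\cdots i_r}$ is strictly positive, and then pass to a limit. In the strictly positive case, the normalized map
$$\phi(\mathbf{x}) \;=\; \frac{(A\mathbf{x}^{r-1})^{[1/(r-1)]}}{\bigl\|(A\mathbf{x}^{r-1})^{[1/(r-1)]}\bigr\|_1}$$
is continuous on the closed standard simplex $\Delta=\{\mathbf{x}\in\mathbb{R}^n_+:\sum x_i=1\}$, and Brouwer's fixed-point theorem supplies a fixed point $\mathbf{x}^*$; rearranging $\phi(\mathbf{x}^*)=\mathbf{x}^*$ shows $\mathbf{x}^*$ is an $H^+$-eigenvector with positive eigenvalue. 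To identify the associated eigenvalue with $\rho(A)$, I would invoke a Collatz--Wielandt style inequality
$$\rho(A) \;=\; \max_{\mathbf{x}>\mathbf{0}}\,\min_{i}\, \frac{(A\mathbf{x}^{r-1})_i}{x_i^{r-1}},$$
proved separately by a straightforward comparison argument using homogeneity. For a general nonnegative tensor, I would approximate $A$ by $A+\varepsilon J$ (with $J$ the all-ones tensor); the positive Perron pairs $(\lambda_\varepsilon,\mathbf{x}_\varepsilon)$ live in a compact set, and extracting a subsequential limit as $\varepsilon\to 0$ yields an $H^+$-eigenvector of $A$ with eigenvalue $\rho(A)$ by continuity.

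For parts (2) and (3), the task is to upgrade nonnegativity of the Perron eigenvector to strict positivity and to obtain uniqueness. I would encode weak irreducibility as strong connectivity of the directed graph $G(A)$ whose arcs are $i\to j$ whenever some nonzero entry $a_{i\,i_2\cdots i_r}$ has $j\in\{i_2,\ldots,i_r\}$. If a Perron eigenvector $\mathbf{x}$ had a zero coordinate, then its support $S$ would be a proper subset of $[n]$, and strong connectivity of $G(A)$ would force a nonzero entry of $A$ coupling $S$ with its complement, contradicting the eigen-equation evaluated on an index in $S^c$. For uniqueness up to positive scaling, I would adapt the Hilbert-projective-metric argument from nonlinear Perron--Frobenius theory: the operator $\mathbf{x}\mapsto(A\mathbf{x}^{r-1})^{[1/(r-1)]}$ is order-preserving and homogeneous of degree $1$, hence non-expansive in the Hilbert metric, and weak irreducibility forces strict contractivity on the part of the cone lying off the Perron ray. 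Part (3) strengthens the hypothesis so that no proper coordinate subspace is invariant under $A$; this is exactly what is needed to exclude additional $H^+$-eigenvectors (rather than only $H^{++}$ ones), so the same scheme applies with ``weakly irreducible'' upgraded to ``irreducible''.

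The main obstacle I anticipate is the Hilbert-metric step: for $r=2$ the projective-contraction rate is classical, but for $r>2$ the degree-$(r-1)$ nonlinearity requires a careful multiplicative estimate, and one must show the contraction is \emph{strict} whenever the argument is not proportional to the Perron vector, using weak irreducibility. A subsidiary technical nuisance is that the limit $\varepsilon\to 0$ in part (1) preserves membership in $\mathbb{R}^n_+$ but not in $\mathbb{R}^n_{++}$, so one cannot shortcut (2) and (3) by the limiting argument alone.
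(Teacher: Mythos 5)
This theorem is quoted in the paper as known background (attributed to Yang--Yang, Friedland--Gaubert--Han, and Chang--Pearson--Zhang) and is not proved there, so there is no in-paper argument to compare against; your proposal is a sketch of the standard literature proofs, and its outline of part (1) --- Brouwer's theorem on the simplex for strictly positive tensors, a Collatz--Wielandt identification of the eigenvalue, and a compactness limit as $\varepsilon\to 0$ --- is sound. The genuine gap is in your positivity argument for part (2). For $r>2$, strong connectivity of $G(A)$ only guarantees a nonzero entry $a_{i\,i_2\cdots i_r}$ with $i$ in the zero set of $\mathbf{x}$ and \emph{some} $i_l$ in the support $S$; the eigen-equation at $i$ reads $0=\sum a_{i\,i_2\cdots i_r}x_{i_2}\cdots x_{i_r}$, and that monomial can still vanish because another index $i_m$ lies outside $S$. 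So a single arc leaving the zero set is not a contradiction. This is precisely the distinction between parts (2) and (3): under irreducibility the definition supplies an entry with \emph{all} of $i_2,\ldots,i_r$ in $S$, and your one-step argument works; under weak irreducibility it fails, and one needs an iterated peeling of the zero set into layers $J_0\supseteq J_1\supseteq\cdots$ that stabilize to $\emptyset$ --- exactly what the paper carries out in the proof of its Lemma \ref{rho(A)>k} when perturbing a nonnegative vector into the open cone. Your remark that ``the same scheme applies'' to part (3) therefore has the logic reversed.

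Separately, the uniqueness step is asserted rather than available by your route: the map $\mathbf{x}\mapsto(A\mathbf{x}^{r-1})^{[1/(r-1)]}$ is indeed order-preserving and homogeneous of degree one, hence non-expansive in Hilbert's projective metric, but weak irreducibility does not force strict contractivity off the Perron ray --- already for $r=2$ a cyclic permutation matrix is irreducible, acts as a Hilbert-metric isometry, and still has a unique positive eigenvector, so uniqueness cannot be extracted from a contraction estimate of this kind. The cited proof instead passes to the log-conjugate topical map and combines the Collatz--Wielandt characterization with strong connectivity of the associated digraph; you would need to supply that (or an equivalent) to close parts (2) and (3). Note finally that for the purposes of this paper the existence half of part (1) is independently furnished by Lemma \ref{rho(A)>k}.
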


In this paper, we will focus on $\{0, 1\}$-tensors, in which every entry
$a_{i_1\cdots i_r}$ is either $0$ or $1$.  An $n$-dimension $r$-order $\{0, 1\}$-tensor $A$ can be viewed
as a general linear-ordered hypergraph $H=(V,E)$, where $V=[n]$ and $E$ consists
of all $r$-tuples $(i_1,i_2,\ldots, i_r)$ such that $a_{i_1i_2\cdots i_r}=1$.
A $\{0, 1\}$-tensor is always nonnegative,  
thus the spectral radius $\rho(A)$ is an $H^+$-eigenvalue, and 
the associated eigenvector $\mathbf{x}\in \mathbb{R}^n_{+}$.

Consider the set of all $\{0, 1\}$-tensors with a fixed number
of 1's. For fixed integer $r\geq 3$ and $e\geq 1$, let
$${\cal T}^r_{n,e}=\{\mbox{ all  $\{0, 1\}$-tensors of order $r$ and dimension $n$
  with exactly $e$ 1's}\},$$
and $${\cal T}^r_{e}=\cup_n{\cal T}^r_{n,e}.$$

Now we consider the objective function
$$g_r(e)=\max_{A\in {\cal T}^r_e}\rho(A).$$
For a fixed $r$ and $e$, we say $A\in {\cal T}^r_e$ is a {\em maximum tensor} if
$\rho(A)=g_r(e)$.
We are interested in the following questions:
\begin{enumerate}
\item {\it What are the values of $g_r(e)$? Can we prove a tight upper bound?}
\item {\it What does $A$ look like if $\rho(A)$ is very closed to $g_r(e)$?}
\end{enumerate}

There are several operations on ${\cal T}^r_e$ that keep both the spectral radius and the number of $1$'s.
\begin{description}
\item[Permutation on vertices (\cite{LLLL}\label{LLLL}):] For any permutation $\varphi\in \mathfrak{S}_n$ and any tensor
  $A=(a_{i_1i_2\cdots i_r}) \in {\cal T}^r_{n}(e)$,
    define a new tensor as follows: $$\varphi(A)=(a_{\varphi(i_1)\varphi(i_2)\cdots \varphi(i_r)}).$$

\item[Transpose on indexes greater than $1$:] For any permutation $\tau$ on the
      index set $\{2,3, \ldots, r\}$, define a new tensor $A_\tau$  as follows:
      $$A_\tau=(a_{i_1i_{\tau(2)}\cdots i_{\tau(2)}}).$$
    
\item[Deleting/inserting isolated vertices:]
        An index/vertex $v$ is called {\em isolated} if $a_{i_1i_2\cdots i_r}=0$ as long as $v$
        appears in the index $\{i_1,i_2,\ldots, i_r\}$. Deleting/Inserting an isolated vertex
        keeps the spectral radius.
 \end{description}

 We say two tensors in ${\cal T}^r_e$ are {\em equivalent} if one can be obtained from the other one by
 a sequence of the above operations. Denote $J^r_k$ as the $k$-dimension $r$-order all-1-tensor 
 ${\bf 1}_{k\times \ldots \times k}$, 
 it plays a special role in the maximum tensors.

In this paper, we prove the following theorem:
\begin{theorem}\label{t1}
  For any $r$-order $\{0, 1\}$-tensor $A$ with $e$ ones, the spectral radius $\rho(A)$
  satisfies
  $$\rho(A)\leq e^{\frac{r-1}{r}},$$
  with the equality holds if and only if $e=k^r$ for some positive integer $k$ and $A$ is equivalent to 
  $J^r_k$.
\end{theorem}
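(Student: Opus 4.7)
The plan is to produce a nonnegative eigenvector $\mathbf{x}$ for $\rho(A)$ via the Perron--Frobenius theorem (part 1 of Theorem~\ref{propertiesfornongegative}), and then use a single application of H\"older's inequality to the associated multilinear form. Multiplying the $i$-th eigenvalue equation (\ref{eigenvalue}) by $x_i$ and summing over $i$ collapses the left side to $\rho(A)\sum_v x_v^r$ and the right side to $\sum_{(i_1,\ldots,i_r)\in E} x_{i_1}\cdots x_{i_r}$, where $E=\{(i_1,\ldots,i_r):a_{i_1\cdots i_r}=1\}$ has $|E|=e$. After normalizing $\sum_v x_v^r=1$, the task reduces to bounding $\sum_{(i_1,\ldots,i_r)\in E} x_{i_1}\cdots x_{i_r}$ by $e^{(r-1)/r}$.

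For the bound itself, I would apply H\"older with conjugate exponents $r/(r-1)$ and $r$, pairing the constant $1$ against $x_{i_1}\cdots x_{i_r}$:
$$\sum_{(i_1,\ldots,i_r)\in E} x_{i_1}\cdots x_{i_r}\;\le\; e^{(r-1)/r}\Bigl(\sum_{(i_1,\ldots,i_r)\in E} x_{i_1}^r\cdots x_{i_r}^r\Bigr)^{1/r}.$$
Extending the inner sum over all of $[n]^r$ can only enlarge it, and it then factors as $\bigl(\sum_v x_v^r\bigr)^r=1$, yielding $\rho(A)\le e^{(r-1)/r}$. This is the only ingredient needed for the upper bound.

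For the equality case I would trace the two estimates separately. H\"older equality forces $x_{i_1}\cdots x_{i_r}$ to equal a single constant $c$ on all of $E$; equality in the second step forces every tuple in $[n]^r$ with strictly positive product to lie in $E$, i.e.\ $V_0^r\subseteq E$ where $V_0=\{v:x_v>0\}$ is the support of $\mathbf{x}$. Assuming $\rho(A)>0$ (so $c>0$), this rules out any edge with a coordinate outside $V_0$, since such an edge would contribute product $0\ne c$; hence $E=V_0^r$. Comparing the products at $(u,u,\ldots,u)$ versus $(v,u,\ldots,u)$ inside $V_0^r$ then forces $x_u=x_v$ for every $u,v\in V_0$. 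Writing $k=|V_0|$ gives $e=|E|=k^r$, the restriction of $A$ to $V_0$ is the all-ones tensor $J^r_k$, and every vertex outside $V_0$ is isolated, so after deleting isolated vertices $A$ is equivalent to $J^r_k$.

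The upper bound is essentially immediate once one spots the right H\"older exponents, so I do not anticipate difficulty there. I expect the most delicate part of the argument to be the equality analysis: one must track both H\"older equality conditions simultaneously, use the constant-product condition to exclude any edge touching $V\setminus V_0$, and then extract uniformity of $\mathbf{x}$ on $V_0$ by comparing appropriate near-diagonal tuples.
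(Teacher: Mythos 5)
Your proposal is correct and follows essentially the same route as the paper: both reduce $\rho(A)$ to the multilinear form evaluated at a nonnegative (Perron--Frobenius) vector and apply H\"older's inequality with exponents $\tfrac{r}{r-1}$ and $r$. Your equality analysis (constant product on $E$, support $V_0$ with $E=V_0^r$, and the comparison of $(u,\ldots,u)$ with $(v,u,\ldots,u)$ to force $\mathbf{x}$ constant on $V_0$) is in fact spelled out more completely than the paper's rather terse treatment of the equality case.
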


We also characterize the structure of maximum tensors for $e=k^r+l$ with sufficiently large $k$ and
 $l\in \{-r-1, -r,\ldots,-1,0,1,2,\ldots, r\}$.

\begin{theorem}\label{completlycharctersmalll}
Let $r,  k$ be positive integers with $r\geq 3$ and $k$ sufficiently large.
\begin{enumerate}
\item For $e=k^r+1$, the maximum tensors in  ${\cal{T}}_e^r$ are exactly the tensors which can
  be obtained from $J^r_k$ by inserting an $1$ to an arbitrary $0$-position. All these maximum tensors
  have spectral radius $k^{r-1}$.
  
\item For $2\leq l\leq r$,  $e=k^r+l$, the maximum tensors in  ${\cal{T}}_e^r$ is uniquely
  equivalent to the tensor obtained from $J_k^r$ by inserting $l$ ones at first $l$ positions of the  list:
  $$\{a_{(k+1)11\cdots 1}, a_{1(k+1)1\cdots 1}, a_{11(k+1)\cdots 1}, \ldots, a_{11\cdots (k+1)}\}.$$
  
\item For $1\leq l\leq r+1$,  $e=k^r-l$, the maximum tensors in  ${\cal{T}}_e^r$ is uniquely equivalent to 
the tensor obtained from $J_k^r$ by 
placing $l$ zeros at the first  $l$ positions from the list:
$$\{a_{kk\cdots k}, a_{k(k-1)k\cdots k}, a_{kk(k-1)\cdots k}, \ldots, a_{kkk\cdots (k-1)}, a_{(k-1)k\cdots k} \}.$$
\end{enumerate}

\end{theorem}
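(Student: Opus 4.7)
My plan is to combine the stability result developed earlier in the paper (alluded to in the abstract) with a careful perturbation analysis of the Perron eigenvector of $J^r_k$, reducing the problem to a finite comparison of near-optimal placements.

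\textbf{Reduction via stability.} First I would apply the stability theorem to any maximum $A \in \mathcal{T}^r_e$ with $e = k^r + l$ and $-r-1 \le l \le r$. Simple constructions already give $\rho(A) = k^{r-1} + O(k^{-2})$, so the extremal value lies comfortably inside the stability window for large $k$. The stability conclusion then forces $A$, after applying the permutation, transpose, and isolated-vertex operations, to be either a principal extension of $J^r_k$ by a small number of new vertices carrying $l$ extra ones (if $l \ge 0$) or a principal sub-tensor of $J^r_k$ missing exactly $|l|$ entries (if $l < 0$). The core $J^r_k$ is then placed on vertex set $[k]$, and the problem reduces to locating the $|l|$ additional or missing positions optimally.

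\textbf{Eigenvector parameterization.} For the reduced tensor, I invoke Theorem \ref{propertiesfornongegative} to take a nonnegative Perron eigenvector $\mathbf{x}$ associated with $\rho(A)$. Because $J^r_k$ has very high symmetry, only a handful of coordinates of $\mathbf{x}$ can differ in each candidate configuration: writing $a$ for the distinguished core vertex (vertex $1$ in part (2), vertex $k$ in part (3)), $b$ for a potential new vertex, and $c$ for the remaining core vertices, the slice eigenvalue equations yield a small nonlinear system in $a, b, c, \lambda$. Expanding around the unperturbed solution $(a, c, \lambda) = (1, 1, k^{r-1})$ with $b = O(k^{-1})$ produces an asymptotic expansion whose leading perturbation coefficient depends on the configuration.

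\textbf{Case analysis of placements.} For $l = 1$, any placement of the single extra one either leaves vertex $1$'s slice equation unchanged or creates a new vertex $k+1$ with no incoming slice from the core, and in either case $\lambda = k^{r-1}$ is attained exactly; every such placement is optimal. For $2 \le l \le r$, the configuration specified in the theorem creates exactly one new vertex, links it back to the core through the single entry $a_{(k+1)11\cdots 1}$, and injects $l-1$ parallel feedbacks into vertex $1$'s slice equation, producing an asymptotic boost $\lambda - k^{r-1} \sim (l-1)k^{-2}$. Every competing placement either creates additional new vertices (each dangling one dilutes the contribution), distributes the feedbacks across several core vertices (strictly suboptimal by concavity of $t \mapsto t^{r-1}$), or fails to close a cycle through $k+1$ (yielding only a lower-order boost). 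Part (3) is dual: the listed removals concentrate the damage at vertex $k$, minimizing the drop in $\lambda$.

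\textbf{The main obstacle.} The principal technical difficulty is proving strict optimality among configurations that already agree at leading asymptotic order. For example, removing $a_{kk\cdots k}$ and removing $a_{kk\cdots k(k-1)}$ both drop $\lambda$ by $k^{-1} + o(k^{-1})$ at leading order, so distinguishing them requires a second-order expansion combined with a Jensen-type convexity inequality applied to $(x_i^{r-1})_{i \in [k]}$. The hypothesis ``$k$ sufficiently large'' enters essentially here: the configuration-dependent lower-order gap must dominate the remaining error terms in the expansion.
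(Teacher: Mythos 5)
Your overall architecture---a stability reduction to a small perturbation of $J^r_k$, followed by a finite comparison of placements via the Perron eigenvector---is the same as the paper's, and your leading-order asymptotics are right (e.g.\ $\lambda-k^{r-1}\sim(l-1)k^{-2}$ for the winning configuration in part (2), and the reducibility argument giving $g_r(k^r+1)=k^{r-1}$ in part (1)). But the proposal stops exactly where the real work begins, and you say so yourself in your last paragraph: the decisive step is to prove \emph{strict} superiority of the claimed configuration over every competitor, including competitors that tie at leading order, which is also what uniqueness requires. You gesture at ``a second-order expansion combined with a Jensen-type convexity inequality'' without carrying it out, and an expansion-based comparison is delicate here: you must control the error terms uniformly over all admissible configurations and still extract a strict, sign-definite gap. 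As written, the phrases ``strictly suboptimal by concavity'' and ``dilutes the contribution'' are conclusions, not arguments.

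The paper closes this gap with two tools absent from your plan. First, a rearrangement theorem (Theorem \ref{maxima}, supplemented by Corollary \ref{compareiandi+1}): a maximum tensor may be assumed to have, in each slice, its $1$'s packed to the front in dictionary order, which collapses the set of competitors to configurations described by one or two integer parameters (the counts $s,t$ of extra or missing entries in at most two slices). Second, the comparison principle of Lemma \ref{rho(A)>k}: to show a candidate $B$ beats a competitor $A$, one takes the Perron vector $\mathbf{x}$ of $A$, modifies finitely many coordinates to obtain a test vector $\mathbf{y}$, and verifies $B\mathbf{y}^{r-1}\geq\lambda\mathbf{y}^{[r-1]}$ with at least one strict inequality. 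This converts each comparison into a concrete algebraic inequality (such as $l-2<2^{-1/(r-1)}(l-1)$ in Lemma \ref{threeconditionsforl>0}, or $F(w)<0$ in its Item 3, or inequality (\ref{star}) in Lemma \ref{0atlastslicewithotherslice}) and automatically yields the strictness needed for uniqueness. Without a substitute for these two steps your proposal is a correct outline of the strategy rather than a proof; supplying them is precisely the content of Section 5 and the Appendix.
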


A special symmetric tensor, the adjacency tensor $A(H)$ of an $r$-uniform hypergraph $H$ on $n$ vertices is defined as $A=(a_{i_1 \cdots i_r})$ where
$a_{i_1 \cdots i_r}=\frac{1}{(r-1)!}$ if $\{i_1, \ldots, i_r\}\in E(H)$, and equals $0$ otherwise.
In our previous paper\cite{LuBai}, 
we gave a bound on spectral radius of $r$-uniform hypergraph with $e$ edges using an $\alpha$-normal labeling method\cite{LuMan}, which is $\rho(H)\leq f_r(e)$, where 
$f_r(x)$ is a function such that 
$f_r\left({n\choose r}\right)={n-1\choose r-1}$.   
The equality holds if and only if $e={k\choose r}$, for integers $k, r$ and $k\geq r$.
Although the results (of two papers) are comparable, the methods are quite different.
\vspace{5mm}

The paper is organized as follows: In section 2, we prove some important lemmas on nonnegative tensors. In section 3, we prove Theorem \ref{t1} and also give lower bounds of the spectral radius. In section 4, we show the structure of the maximum $\{0, 1\}$-tensor when $e=k^r+l$ with relatively small $l$. In section 5, we determine
the maximum tensors for $-r-1\leq l\leq r$ to finish the proof of Theorem \ref{completlycharctersmalll}.

\section{Lemmas on nonnegative tensors}
In this section, we will prove important properties for nonnegative tensors. We start with some definitions and known facts. 
\begin{definition}
\cite{Lim} An n-dimension r-order tensor $A = (a_{i_1i_2\cdots i_r})$ is called {\em reducible}
 if there exists a nonempty proper subset $I  \subset \{1,\ldots, n\}$ such that
 $a_{i_1i_2\cdots i_r}=0$ for all $i_1 \in I$ and $i_2, \ldots , i_r \notin I$.
 A tensor A is said to be irreducible if it is not reducible.
\end{definition}

\begin{definition}
\cite{HHQ}  A nonnegative matrix $G(A)$ is called the representation associated to the nonnegative tensor $A$, if the
$(i, j)$-th element of $G(A)$ is defined to be the summation of $a_{ii_2\cdots i_r}$ with indices 
$j\in \{i_2,\ldots, i_r\}$.
A nonnegative tensor $A = (a_{i_1i_2\cdots i_r})$ is said to be {\em weakly reducible}
if $G(A)$ is a reducible matrix. It is {\em weakly irreducible} if it is not weakly
reducible.
\end{definition}

\begin{theorem}
\cite{FriedlandGaubetHan, PZ} For an n-dimension r-order tensor $A = (a_{i_1i_2\cdots i_r})$, let $G_A = (V (G_A), E(G_A))$
be the digraph of the tensor A with vertex set $V (G_A) = \{1, 2, \ldots, n\}$ and arc
set $E(G_A) = \{(i, j)| a_{ii_2\cdots i_m}\neq 0, j \in {i_2,\ldots, i_m}\}$.  $A$ is weakly irreducible if the corresponding directed graph $G(A)$ is strongly connected. That is for any pair of vertices 
$i$ and $j$, there exist directed paths from $i$ to $j$ and $j$ to $i$.
\end{theorem}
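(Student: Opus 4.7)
The plan is to reduce the claim to the classical matrix fact that a nonnegative matrix is irreducible if and only if its support digraph is strongly connected, using the representation matrix $G(A)$ as the bridge between the tensor and its associated digraph.

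First, I would compare the two combinatorial objects attached to $A$. By the definition of the representation matrix,
$$G(A)_{ij} = \sum_{\substack{i_2,\ldots,i_r \\ j \in \{i_2,\ldots,i_r\}}} a_{i i_2 \cdots i_r}.$$
Since every entry of $A$ is nonnegative, this sum is strictly positive precisely when there exists some tuple $(i_2,\ldots,i_r)$ with $a_{i i_2 \cdots i_r}\neq 0$ and $j\in \{i_2,\ldots,i_r\}$; this is exactly the defining condition for $(i,j)$ to be an arc of $G_A$. Hence the digraph $G_A$ is identical to the support digraph of the nonnegative matrix $G(A)$.

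Second, I would invoke the standard Perron--Frobenius lemma for nonnegative matrices: a nonnegative $n\times n$ matrix $M$ is irreducible if and only if its support digraph is strongly connected. The easy direction is that a nonempty proper subset $I\subset [n]$ witnessing reducibility (i.e.\ $M_{ij}=0$ for all $i\in I$, $j\notin I$) corresponds exactly to a set $I$ with no outgoing arcs in the digraph, contradicting strong connectivity; conversely, if strong connectivity fails then the reachability closure of any vertex yields such an invariant set $I$.

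Third, chaining the equivalences: by definition $A$ is weakly irreducible iff $G(A)$ is an irreducible matrix, which by the matrix lemma is iff the support digraph of $G(A)$ is strongly connected, which by the first step is iff $G_A$ is strongly connected. The only genuinely new content beyond standard matrix theory is the identification of $G_A$ with the support digraph of $G(A)$, and this is immediate from the nonnegativity of the entries; the rest is bookkeeping, so I do not expect any real obstacle.
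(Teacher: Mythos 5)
Your argument is correct, and it is the standard proof of this fact; the paper itself states this result as a citation to Friedland--Gaubert--Han and Pearson--Zhang without giving a proof, so there is nothing in the text to compare against. Two small points to tighten. First, the identification of $G_A$ with the support digraph of the representation matrix $G(A)$ genuinely requires nonnegativity of $A$ (so that the defining sums cannot vanish by cancellation); this is implicit in the paper's setting, since weak irreducibility is only defined there for nonnegative tensors, but you should state the hypothesis explicitly since the theorem as written says only ``tensor.'' Second, in your converse direction, the reachability closure of an \emph{arbitrary} vertex need not be a proper subset; you want the closure of a vertex $u$ from which some vertex $v$ is not reachable, which exists precisely because strong connectivity fails. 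Note also that the theorem as stated only claims the implication from strong connectivity to weak irreducibility, which is the first (and easier) of your two directions; your proof in fact gives the full equivalence.
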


\begin{theorem}\cite{Shao}\label{decompositionoftensor}
Let A be an n-dimension r-order tensor, $r\geq 2$. Then there exists positive integers $k\geq 1$ and $n_1, \ldots, n_k$ with $n_1+\cdots +n_k=n$ such that A is permutational similar to some $(n_1, \ldots, n_k)$-lower triangular block tensor, where all the diagonal blocks $A_1, \ldots, A_k$ are weakly irreducible. And we have:
\begin{align*}
& Det (A) =\prod\limits_{i=1}^{r} (Det A_i)^{(r-1)^{n-n_i}},
\end{align*}
and thus
\begin{align*}
& \phi_A(\lambda)=\prod\limits_{i=1}^{r} (\phi_{A_i}(\lambda))^{(r-1)^{n-n_i}}
\end{align*}
where $\phi_A(\lambda)$ is the characteristic polynomial of the tensor A, that is $\phi_A(\lambda)=Det(\lambda I-A)$.
\end{theorem}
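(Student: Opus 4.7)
My plan is to prove the theorem in two stages: first the block decomposition, then the determinant (and hence the characteristic polynomial) factorization. The starting point for the decomposition is the representation matrix $G(A)$. Since $G(A)$ is a nonnegative $n\times n$ matrix, the classical Frobenius normal form yields a permutation matrix $P$ such that $P G(A) P^{-1}$ is block lower triangular with diagonal blocks indexed by the strongly connected components of the digraph $G_A$, each such block being irreducible as a matrix. Applying the same permutation to $A$ as a vertex-permutation (in the sense introduced earlier in the paper) transfers this block structure from the matrix level to the tensor level: a nonzero entry $a_{i_1\cdots i_r}$ contributes edges $i_1\to i_j$ in $G_A$ for all $j\geq 2$, so the topological ordering of the SCCs forces $i_2,\ldots,i_r$ to lie in blocks no larger than the one containing $i_1$. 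Each diagonal sub-tensor $A_i$ is then weakly irreducible, since its representation digraph is the sub-digraph of $G_A$ on the corresponding SCC, which is strongly connected by construction.

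For the determinant formula, I would invoke the resultant-based definition of $\mathrm{Det}(A)$: up to a fixed normalization, $\mathrm{Det}(A)$ is the resultant of the polynomial system $A\mathbf{x}^{r-1}=0$, consisting of $n$ homogeneous polynomials of degree $r-1$ in $n$ variables. Under the block lower triangular structure, this system cascades --- the equations indexed by the first block involve only the first block of variables, those indexed by the second block involve the first two blocks, and so on. The Poisson-type product formula for resultants of block-triangular homogeneous systems then factors the overall resultant as $\prod_i \mathrm{Det}(A_i)^{e_i}$. A B\'ezout-style count on the degrees of the ``outer'' equations (each of degree $r-1$, with $n-n_i$ of them above block $i$) pins the exponent down to $e_i=(r-1)^{n-n_i}$; this is consistent with the overall degree $n(r-1)^{n-1}$ of the tensor determinant, since $\sum_i n_i(r-1)^{n_i-1}\cdot (r-1)^{n-n_i}=n(r-1)^{n-1}$. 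The characteristic polynomial identity then follows at once by applying the determinant formula to the tensor $\lambda I - A$, which is itself block lower triangular with diagonal blocks $\lambda I - A_i$.

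The main obstacle is the careful bookkeeping of the exponents in the determinant factorization, which rests on a multiplicativity principle for resultants of block-triangular homogeneous polynomial systems --- a genuine input from elimination theory, and the tensor analogue of the matrix identity $\det M=\prod\det M_i$ for block triangular matrices. Getting the precise exponent $(r-1)^{n-n_i}$ right requires tracking how degrees multiply as one successively eliminates the variables lying outside block $i$; once this Poisson-type formula is in hand the rest of the proof is essentially bookkeeping, but without it one cannot hope to pin down the exponent correctly.
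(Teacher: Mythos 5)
The paper offers no proof of this statement: it is imported verbatim from the cited reference \cite{Shao} (Shao--Shan--Zhang), so there is no internal argument to compare against. Your outline does follow the same route as that source. The structural half is fine: a nonzero entry $a_{i_1\cdots i_r}$ puts arcs from $i_1$ to each of $i_2,\ldots,i_r$ in $G_A$, so ordering the strongly connected components so that all arcs go from higher-indexed to lower-or-equal-indexed components yields exactly the lower-triangular condition, and each diagonal block is weakly irreducible because its associated digraph is a strongly connected component. The reduction of the characteristic-polynomial identity to the determinant identity is also fine, since $\lambda I-A$ inherits the block structure with diagonal blocks $\lambda I-A_i$.

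The gap is in the determinant half. The ``Poisson-type product formula for resultants of block-triangular homogeneous systems,'' with the correct exponents, \emph{is} the substance of the theorem; in \cite{Shao} the two-block identity $\mathrm{Det}(A)=\mathrm{Det}(A_1)^{(r-1)^{n_2}}\,\mathrm{Det}(A_2)^{(r-1)^{n_1}}$ is established from divisibility and irreducibility properties of resultants and the general case follows by induction on the number of blocks, so citing the multiplicativity principle as a black box leaves the hard step unproved rather than reduced to bookkeeping. Moreover, your B\'ezout-style check $\sum_i n_i(r-1)^{n_i-1}(r-1)^{n-n_i}=n(r-1)^{n-1}$ only verifies consistency of the \emph{total} degree; it cannot single out the exponents $e_i$, since many exponent vectors match the total degree. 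The standard way to pin them down, once one knows $\mathrm{Det}(A)=\prod_i\mathrm{Det}(A_i)^{e_i}$, is to compare degrees in the coefficients of a single slice lying in block $i$: the resultant of $n$ forms of degree $r-1$ has degree $(r-1)^{n-1}$ in the coefficients of any one equation, while $\mathrm{Det}(A_i)$ has degree $(r-1)^{n_i-1}$ in each of its slices, forcing $e_i=(r-1)^{n-1}/(r-1)^{n_i-1}=(r-1)^{n-n_i}$. With that per-block degree argument supplied, and the two-block factorization either proved or explicitly cited, your outline becomes a complete proof.
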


Please refer to \cite{Shao} for more details on the definitions of determinants and the characteristic polynomial of tensor $A$. Since $\lambda$ is an eigenvalue of $A$ if and only if it is a root of the
characteristic polynomial of $A$,
Theorem \ref{decompositionoftensor} says that the spectral radius of tensor $A$ is the spectral radius of lower triangular block tensor $A_i$ for some $i$. This allows us to consider weakly irreducible tensor only.

We first prove the following lemma on general nonnegative tensors. 
\begin{lemma}\label{rho(A)>k}
  Let $A$ be an $n$-dimension $r$-order nonnegative tensor. If there exists a nonzero vector $\mathbf{x}\in \mathbb{R}^n_{+}$ and a scalar $\lambda$ such that $A\mathbf{x}^{r-1}\geq \lambda\mathbf{x}^{[r-1]}$, then we have $$\rho(A)\geq \lambda.$$
Moreover, if $A$ is weakly irreducible then the equality holds if and only if $\mathbf{x}$ is an eigenvector corresponding to $\rho(A)$.
\end{lemma}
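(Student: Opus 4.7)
\textbf{The plan} is to prove the inequality $\rho(A)\geq \lambda$ by a perturbation-plus-ratio argument and then to analyze the equality case via the strong connectivity of the representation digraph $G(A)$.

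First I reduce to the case where $A$ is weakly irreducible by setting $A_\epsilon:=A+\epsilon J$, with $J$ the $n$-dimensional $r$-order all-ones tensor. For every $\epsilon>0$ every entry of $A_\epsilon$ is positive, so $G(A_\epsilon)$ is the complete digraph and $A_\epsilon$ is weakly irreducible; the hypothesis still gives $A_\epsilon \mathbf{x}^{r-1}\geq A\mathbf{x}^{r-1}\geq \lambda\mathbf{x}^{[r-1]}$. Once the weakly irreducible case is established we get $\rho(A_\epsilon)\geq \lambda$, and continuity of the roots of $\phi_{A_\epsilon}(\mu)$ in the entries of $A_\epsilon$ yields $\rho(A)=\lim_{\epsilon\to 0^+}\rho(A_\epsilon)\geq \lambda$. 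Assume now that $A$ is weakly irreducible. By Theorem~\ref{propertiesfornongegative}(2) there is a positive Perron eigenvector $\mathbf{y}\in\mathbb{R}^n_{++}$ with $A\mathbf{y}^{r-1}=\rho(A)\mathbf{y}^{[r-1]}$. Set $s:=\max_{i\in[n]} x_i/y_i$; since $\mathbf{y}>\mathbf{0}$ and $\mathbf{x}\neq\mathbf{0}$, $s>0$ and is attained at some $k$, with $x_k=sy_k>0$ and $\mathbf{x}\leq s\mathbf{y}$ componentwise. Nonnegativity of $A$ makes $\mathbf{z}\mapsto A\mathbf{z}^{r-1}$ componentwise monotone on $\mathbb{R}^n_+$, so
\[
(A\mathbf{x}^{r-1})_k\leq (A(s\mathbf{y})^{r-1})_k=s^{r-1}(A\mathbf{y}^{r-1})_k=s^{r-1}\rho(A)y_k^{r-1}=\rho(A)x_k^{r-1}.
\]
Combining with the hypothesis $(A\mathbf{x}^{r-1})_k\geq \lambda x_k^{r-1}$ and dividing by $x_k^{r-1}>0$ yields $\lambda\leq \rho(A)$.

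For the equality statement, the direction ``$\mathbf{x}$ a Perron eigenvector implies $\lambda=\rho(A)$'' is immediate: then $A\mathbf{x}^{r-1}=\rho(A)\mathbf{x}^{[r-1]}$, so $\min_i(A\mathbf{x}^{r-1})_i/x_i^{r-1}=\rho(A)$ and the largest admissible $\lambda$ equals $\rho(A)$. Conversely, suppose $\lambda=\rho(A)$ but $\mathbf{x}$ is not a positive scalar multiple of $\mathbf{y}$. Then $I:=\{i:x_i/y_i<s\}$ is a nonempty proper subset of $[n]$ with $k\notin I$. For each $j\notin I$, chaining the previous bounds gives
\[
\rho(A)x_j^{r-1}=\lambda x_j^{r-1}\leq (A\mathbf{x}^{r-1})_j\leq (A(s\mathbf{y})^{r-1})_j=\rho(A)x_j^{r-1},
\]
so the chain collapses to equality and
\[
\sum_{i_2,\ldots,i_r}a_{j i_2\cdots i_r}\bigl[(sy_{i_2})\cdots (sy_{i_r})-x_{i_2}\cdots x_{i_r}\bigr]=0.
\]
Each bracket is nonnegative, and it is strictly positive whenever some $i_m\in I$ (since then $sy_{i_m}>x_{i_m}\geq 0$ while $sy_{i_\ell}\geq x_{i_\ell}\geq 0$ for the remaining indices, forcing a strict product inequality). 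Thus any nonzero entry $a_{j i_2\cdots i_r}$ must have all of $i_2,\ldots,i_r\notin I$. Hence no arc of $G(A)$ leaves $[n]\setminus I$ and enters $I$, contradicting the strong connectivity of $G(A)$ guaranteed by weak irreducibility.

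\textbf{The main obstacle} is the equality case: converting termwise equality in the tensor identity into a combinatorial, digraph-closure statement on $G(A)$ and then deriving the contradiction from strong connectivity. The inequality itself is a direct tensor analogue of the classical Perron ratio argument once the Perron eigenvector is in hand; the extra care in the equality analysis reflects the multilinear rather than linear nature of $\mathbf{z}\mapsto A\mathbf{z}^{r-1}$.
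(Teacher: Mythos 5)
Your argument is correct, but it follows a genuinely different route from the paper's. For the weakly irreducible case you invoke the Friedland--Gaubert--Han Perron--Frobenius theorem (Theorem~\ref{propertiesfornongegative}(2)) to obtain a positive eigenvector $\mathbf{y}$ and then run a Collatz--Wielandt ratio argument at the index maximizing $x_i/y_i$; the paper instead gives a self-contained variational proof, first rescaling the zero components of $\mathbf{x}$ to make it strictly positive, then showing the set $\Lambda=\{\lambda: S^+_\lambda\neq\emptyset\}$ is open and that $\sup\Lambda$ is an eigenvalue via compactness of the unit sphere --- an argument that, as the authors remark, reproves the existence of the Perron vector rather than assuming it. For general $A$ you perturb to $A+\epsilon J$ and appeal to continuity of the roots of the (monic) characteristic polynomial in the entries, while the paper uses Shao's block-triangular decomposition (Theorem~\ref{decompositionoftensor}) and restricts $\mathbf{x}$ to the first block on which it is nonzero. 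Both treatments of the equality case ultimately rest on strong connectivity of $G(A)$, but again differently: the paper bumps up coordinates to turn equalities into strict inequalities (openness of $\Lambda$), whereas you show that the set $I$ where $x_i/y_i$ is below its maximum would admit no incoming arcs from its complement, contradicting strong connectivity. The trade-off is clear: your proof is shorter and more classical in flavor but leans on two external facts (the cited Perron--Frobenius theorem and spectral continuity), while the paper's is longer but essentially self-contained and yields the existence of the Perron vector as a byproduct. One small point worth making explicit in your write-up: the strictness of the product inequality $(sy_{i_2})\cdots(sy_{i_r})>x_{i_2}\cdots x_{i_r}$ when some $i_m\in I$ uses $y>0$, so every factor $sy_{i_\ell}$ is strictly positive --- you do note this, and it is exactly where weak irreducibility (via positivity of $\mathbf{y}$) enters.
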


Before proving this lemma, we have a simple corollary. Let $A$ and $B$ are two tensors of the same
dimension and the same order. We say $A\geq B$ if $A-B$ is nonnegative. We also write $A>B$
if $A\geq B$ and $A\not=B$.

\begin{corollary}\label{subtensor}
  For any two nonnegative tensors $A$ and $B$, if $A\geq B$, then $\rho(A)\geq \rho(B)$.
  Furthermore, if $B$ is weakly irreducible and $A>B$, then $\rho(A)>\rho(B)$.
\end{corollary}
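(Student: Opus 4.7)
The plan is to reduce both claims directly to Lemma~\ref{rho(A)>k} by testing the inequality $A\mathbf{x}^{r-1}\geq \lambda\mathbf{x}^{[r-1]}$ against a Perron–Frobenius eigenvector.

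For the weak inequality $\rho(A)\geq \rho(B)$, first I would apply Theorem~\ref{propertiesfornongegative}(1) to the nonnegative tensor $B$ to obtain an $H^+$-eigenvector $\mathbf{x}\in \mathbb{R}^n_+\setminus\{\mathbf{0}\}$ satisfying $B\mathbf{x}^{r-1}=\rho(B)\mathbf{x}^{[r-1]}$. Since $A\geq B$ entrywise and $\mathbf{x}\geq \mathbf{0}$, termwise comparison gives $A\mathbf{x}^{r-1}\geq B\mathbf{x}^{r-1}=\rho(B)\mathbf{x}^{[r-1]}$. Applying Lemma~\ref{rho(A)>k} (first part, which does not require irreducibility) to $A$ with $\lambda=\rho(B)$ yields $\rho(A)\geq \rho(B)$.

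For the strict inequality, I would first observe that weak irreducibility is inherited upward: the representation matrix $G(A)$ dominates $G(B)$ entrywise, so if $G(B)$ is irreducible then so is $G(A)$, i.e.\ $A$ is weakly irreducible as well. Now I would take the strictly positive $H^{++}$-eigenvector $\mathbf{x}\in \mathbb{R}^n_{++}$ of $B$ provided by Theorem~\ref{propertiesfornongegative}(2), so that $B\mathbf{x}^{r-1}=\rho(B)\mathbf{x}^{[r-1]}$ and, as before, $A\mathbf{x}^{r-1}\geq \rho(B)\mathbf{x}^{[r-1]}$. Assume for contradiction that $\rho(A)=\rho(B)$. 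Then the inequality in Lemma~\ref{rho(A)>k}, applied to the weakly irreducible tensor $A$, attains equality at its conclusion $\rho(A)\geq\rho(B)$, forcing $\mathbf{x}$ to be an eigenvector of $A$ for $\rho(A)$, i.e.\ $A\mathbf{x}^{r-1}=\rho(A)\mathbf{x}^{[r-1]}=\rho(B)\mathbf{x}^{[r-1]}=B\mathbf{x}^{r-1}$.

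Consequently $(A-B)\mathbf{x}^{r-1}=\mathbf{0}$. Because $A>B$, some entry $(A-B)_{i_1 i_2\cdots i_r}$ is strictly positive; since $\mathbf{x}$ has all coordinates positive, the $i_1$-th component
\[
\bigl((A-B)\mathbf{x}^{r-1}\bigr)_{i_1}=\sum_{j_2,\ldots,j_r}(A-B)_{i_1 j_2\cdots j_r}\,x_{j_2}\cdots x_{j_r}
\]
is a sum of nonnegative terms containing at least one strictly positive term $x_{i_2}\cdots x_{i_r}>0$, hence is strictly positive. This contradicts $(A-B)\mathbf{x}^{r-1}=\mathbf{0}$ and proves $\rho(A)>\rho(B)$. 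The only nontrivial step is verifying that the equality case of Lemma~\ref{rho(A)>k} is applicable, which is why I first need to promote weak irreducibility from $B$ to $A$; everything else is a direct entrywise calculation.
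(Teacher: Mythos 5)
Your proof is correct and follows essentially the same route as the paper: test the inequality $A\mathbf{x}^{r-1}\geq B\mathbf{x}^{r-1}=\rho(B)\mathbf{x}^{[r-1]}$ at a Perron--Frobenius eigenvector of $B$, apply Lemma~\ref{rho(A)>k} for the weak inequality, and invoke its equality case together with the strict positivity of $\mathbf{x}$ to rule out $\rho(A)=\rho(B)$ when $A>B$. Your explicit observation that $A$ inherits weak irreducibility from $B$ (via $G(A)\geq G(B)$) is a point the paper leaves implicit but which is genuinely needed to apply the equality case of the lemma to $A$, so that added care is welcome.
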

\begin{proof}
  Let $\mathbf{x}\in \mathbb{R}^n_+$ be the Perron-Fronbenius vector of $B$. Observing
  \begin{equation}
    \label{eq:AB}
 A\mathbf{x}^{r-1}\geq B\mathbf{x}^{r-1}=\rho(B) \mathbf{x}^{[r-1]}.   
  \end{equation}
  Applying Lemma \ref{rho(A)>k}, we have $\rho(A)\geq \rho(B)$.

  If further $B$ is weakly irreducible, then $\mathbf{x}\in \mathbb{R}^n_{++}$.
  Since $A>B$, one of Inequalities \ref{eq:AB} is strict. In particular, $\mathbf{x}$ is
  not an eigenvector of $A$. Thus, we must have $\rho(A)>\rho(B)$.
\end{proof}

\begin{proof}[Proof of Lemma \ref{rho(A)>k}:]
When $\lambda=0$, it is trivial. Without loss of generality, we assume $\lambda>0$.

  First we consider the case when $A$ is weakly irreducible. 
We claim that we can modify $\mathbf{x}$ so that $\mathbf{x}\in \mathbb{R}^n_{++}$. That is, 
if there exists a nonzero vector $\mathbf{x}\in \mathbb{R}^n_{+}$ and a scalar $\lambda$ such that $A\mathbf{x}^{r-1}\geq \lambda\mathbf{x}^{[r-1]}$, then there exists a new vector ${\bf y}\in \mathbb{R}^n_{++}$, such that $A\mathbf{y}^{r-1}\geq \lambda\mathbf{y}^{[r-1]}$. 
 
 If not, let $J=\{j\in [n] \mid x_j=0\}\not=\emptyset$. Let $J_0=J$ and for $i=1,2,\ldots$, define
 $$J_i=\{j\in J_{i-1} \mid a_{jj_2\cdots j_r}>0 \Rightarrow j_2,\ldots, j_r\in J_{i-1}\}. $$
 We have
 $$J=J_0\supseteq J_1 \supseteq J_2\supseteq \ldots.$$
 Assume $J_i$ is stabilized after $s$ steps; i.e., $J_{s}=J_{s+1}$.
 Since $A$ is weakly irreducible, $J_s=\emptyset$.

 Let $\delta$ be the minimum among all positive entries of $A$. 
 Let
 $\epsilon>0$ be a tiny positive number satisfying 
 $\frac{1}{\epsilon}\gg \log(1/\epsilon)\geq \frac{\lambda}{\delta}$.
 For $i=1,2,\ldots, s$, set $a_i=\sum_{j=1}^i(r-1)^{s-j}$, and $\epsilon_i=\frac{\epsilon}{\log^{a_i}(1/\epsilon)}.$ We have
 $$0<\epsilon_s \ll \epsilon_{s-1} \ll \cdots \ll \epsilon_{2} \ll \epsilon_1 \ll \epsilon\ll
 \frac{\delta}{\lambda}.$$

 We define a new variable $\mathbf{y}=(y_1,y_2,\ldots, y_n)^{T}\in \mathbb{R}^n_{++}$ by
 $$ y_j=
 \begin{cases}
   x_j \mbox{ if } j\not\in J;\\
   \epsilon_i \mbox{ if } j\in J_{i-1}\setminus J_i.\\
 \end{cases}
 $$
 We claim that $A\mathbf{y}^{r-1}\geq \lambda \mathbf{y}^{[r-1]}$.
 When $j\not\in J$, we have
 $$(A\mathbf{y}^{r-1})_j\geq (A\mathbf{x}^{r-1})_j\geq \lambda x_j^{r-1}=\lambda y_j^{r-1}.$$
 When $j\in J_{i-1}\setminus J_i$, then there exist an entry $a_{jj_2\ldots j_r}>0$ and at least one index $j_l\not\in J_{i-1}$ ($l\geq 2$).
 Thus, we have
 \begin{align*}
   (A\mathbf{y}^{r-1})_j &\geq a_{jj_2\ldots j_r}y_{j_2}\cdots y_{j_r}\\
                         &\geq \delta \epsilon_{i-1}\epsilon_s^{r-2}\\
                         &= \delta \frac{\epsilon^{r-1}}{\log^{a_{i-1}+(r-2)a_s} (1/\epsilon)}\\
                         &\geq \lambda\frac{\epsilon^{r-1}}{\log^{a_{i-1}+(r-2)a_s+1} (1/\epsilon)}\\
                         &= \lambda  \frac{\epsilon^{r-1}}{\log^{a_{i}(r-1)} (1/\epsilon)}\\
                         &=\lambda \epsilon_i^{r-1}\\
                         &=\lambda y_j^{r-1}.
                        \end{align*}

 Here we applied the equality
 $$a_{i-1}+(r-2)a_s+1= a_{i}(r-1),$$
 which can be verified directly by the definition of $a_i$.

 Hence, without loss of generality, we can assume $\mathbf{x}\in \mathbb{R}^n_{++}$.
  For any $\lambda>0$, we define two sets $S_{\lambda}$ and $S_{\lambda}^+$ as follows:
  \begin{align*}
    S_{\lambda}&=\{\mathbf{x}\in \mathbb{R}^n_{+}\colon A\mathbf{x}^{r-1}\geq\lambda\mathbf{x}^{[r-1]}\},\\
S_{\lambda}^+&=\{\mathbf{x}\in \mathbb{R}^n_{++}\colon A\mathbf{x}^{r-1}\geq\lambda\mathbf{x}^{[r-1]} \mbox{ and at least one inequality is strict}\}.  
  \end{align*}

Let $\Lambda =\{\lambda\colon S_{\lambda}^+\neq \emptyset\}$. 

\textbf{Claim 1:} $\Lambda\subset \mathbb{R}$ is an open set. 

For any $\lambda\in \Lambda$, there exists $\mathbf{x}\in \mathbb{R}^n_{++}$ satisfying the following system:
\begin{equation}\label{systemequations}
  \sum\limits_{i_2,...,i_r=1}^{n} a_{
    i i_2 \cdots i_r} x_{i_2} \cdots x_{i_r}\geq \lambda x_{i}^{r-1}
  \mbox{ for } i=1,2,\ldots, n.
\end{equation}
Let $A^i$ be the $i$-th equation in (\ref{systemequations}) and $I$ be the index such that the equality holds at $A^i$.
That is,
$I=\{i\in [n] | \sum\limits_{i_2,...,i_r=1}^{n} a_{ ii_2 \cdots i_r} x_{i_2} \cdots x_{i_r}=\lambda x_{i}^{r-1}\}$.

Assume $I\not=\emptyset$. 
Since $G_A$ is strongly connected, there exist at least one pair vertices  $i\in I$ and  $u\in [n]\setminus I$  such that $(i, u)\in E(G_A)$, for this to happen, we have $a_{ ii_2 \cdots i_r}\neq 0$ when $u=i_l$ for some $l\geq 2$.
Then $x_u$ appears in
equation $A^i$. Since $A^u$ is a strictly inequality,  we can add appropriate positive tiny value $\epsilon_u$ to $x_u$ so that $A^u$ remains a strictly inequality.
Now the
$i$-th
equation $A^i$ becomes a strictly inequality while other strictly greater inequalities remain strict.
By induction on $|I|$,
after finite steps, we can obtain a new vector $\mathbf{x'}$ to replace $\mathbf{x}$ and  
we will have a new system with all strictly greater inequalities. That is,
for all $i\in [n]$,
$$\sum\limits_{i_2,...,i_r=1}^{n} a_{ ii_2 \cdots i_r} x'_{i_2} \cdots x'_{i_r}>
\lambda {x'}_{i}^{r-1}.$$
Therefore there exists an $\epsilon>0$ such that $A\mathbf{x'}^{r-1}> (\lambda+\epsilon) \mathbf{x'}^{[r-1]}$. Thus $(\lambda-\epsilon, \lambda+\epsilon) \subset \Lambda$.  $\Lambda$ is an open set. 

Since $\rho(A)$ exists, $\Lambda$ is a bounded set.
Let $\lambda_0=\sup(\Lambda)$.

\textbf{Claim 2:}
$\lambda_0$ is an eigenvalue of $A$. In particular, $\lambda_0\leq \rho(A)$.

In the definition of $S_\lambda$, the system of inequalities are homogeneous in $\mathbf{x}$.
Without loss of generality, we can normalize $\mathbf{x}$ so that $\|\mathbf{x}\|_r=1$.
Note that the sphere in the first quadrant
$\{x\in \mathbb{R}^n_+\colon \|\mathbf{x}\|_r=1\}$ is a compact set.
Thus any sequence has a convergent subsequence and the limit point is also in this set.
It implies that there is a $\mathbf{x}\in  \mathbb{R}^n_+$ so that
$$A \mathbf{x}^{r-1}\geq \lambda_0 \mathbf{x}^{[r-1]}.$$
Now we show that $\mathbf{x}>0$. 
Assume not, let $J=\{i\in [n]\colon x_i=0\}$.
By the previous argument, we can find a $y\in \mathbb{R}^n_{++}$ still satisfying
$$A \mathbf{y}^{r-1}\geq \lambda_0 \mathbf{y}^{[r-1]}.$$
Also notice that $A_i$ are strictly inequality for all $i\in J$. 
Thus $\lambda_0\in \Lambda$. Contradiction to the fact that $\Lambda$ is an open set.

Hence $\mathbf{x}\in  \mathbb{R}^n_{++}$ and $A \mathbf{x}^{r-1}= \lambda_0 \mathbf{x}^{[r-1]}$. Thus $\lambda_0$ is an eigenvalue of $A$. Therefore
$$\lambda\leq \lambda_0\leq \rho(A).$$
If the inequality $\lambda=\rho(A)$ holds, then $\bf x$ is
an eigenvector for $\rho(A)$.

Now we consider general $A$.  By Theorem \ref{decompositionoftensor}, $A$ is  permutationally similar to some $(n_1,n_2,\ldots, n_k)$-lower triangular block tensor, where all the diagonal blocks $A_1, \ldots, A_k$ are weakly irreducible. Denote by $I_i$
the $i$-th block of indexes of size $n_i$.
We have
$$A_{1}({\bf x}|_{I_1})^{r-1}=(A {\bf x}^{r-1})|_{I_1}\geq (\lambda {\bf x}^{[r-1]})|_{I_1}=\lambda ({\bf x} |_{I_1})^{[r-1]}.$$
If $\mathbf{x}|_{I_1}\not=0$,
 then the weakly irreducible tensor $A_1$ satisfies the condition of lemma. Thus by previous argument, we are done:
 $$\rho(A)\geq \rho(A_1)\geq \lambda.$$
 If $\mathbf{x}|_{I_1}=0$, we consider $I_2$, and so on. Let $j$ be the first indexes
 so that $\mathbf{x}|_{I_j}\not=0$.
 We have 
 $$A_{j}({\bf x}|_{I_j})^{r-1}=(A {\bf x}^{r-1})|_{I_j}\geq (\lambda {\bf x}^{[r-1]})|_{I_j}=\lambda ({\bf x} |_{I_j})^{[r-1]}.$$
Now the weakly irreducible tensor $A_j$ satisfies the condition of lemma.
 We still have
 $$\rho(A)\geq \rho(A_j)\geq \lambda.$$
\end{proof} 
Lemma \ref{rho(A)>k}
plays an important role in characterizing the largest eigenvalue and 
thus can be applied to determine the maximum tensors in the last section.
In fact, this lemma gives another proof for the existence of the Perron-Frobenius vector
for nonnegative tensor.
Cooper and Dutle \cite{Cooper} proved a similar result on adjacency tensor of connected uniform hypergraph, that is, on a symmetric nonnegative weakly irreducible tensor.

Next, we will generalize a theorem of Schwarz\cite{Sch} on general nonnegative $r$-order tensors with $r\geq 3$.   For  $n, r\geq 3$,  let $\sigma$ be a given set of $n^r$ nonnegative real numbers   
(not necessarily pairwise distinct) and let $\mathcal{F}(\sigma)$ be the set of all $n$-dimension $r$-order tensors $A$ for which $\sigma$ is the set of their elements. 
Denote $f(\sigma)$ as the largest spectral radius among tensors in $\mathcal{F}(\sigma)$. 
Let  $\mathcal{F}^*(\sigma)$ be the subset of $\mathcal{F}(\sigma)$ consisting of
these tensors having the property that in each slice $A_i$ the elements decrease according to the dictionary order;
i.e. $a_{i i_2 \cdots i_r}\geq a_{i j_2 \cdots j_r}$ whenever $(i_2, \ldots, i_r)\leq (j_2, \ldots, j_r)$
under the dictionary order.
Let $f^*(\sigma)$ be the largest spectral radius among tensors in $\mathcal{F}^*(\sigma)$.
We first show that $f(\sigma)$ is attained by some tensor in $\mathcal{F}^*(\sigma)$.
\begin{theorem}\label{maxima}
  $f(\sigma)=f^*(\sigma)$.
 \end{theorem}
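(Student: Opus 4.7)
The inclusion $\mathcal{F}^*(\sigma) \subseteq \mathcal{F}(\sigma)$ gives $f^*(\sigma) \leq f(\sigma)$ for free, so I will concentrate on the reverse inequality. Start from any $A \in \mathcal{F}(\sigma)$ with $\rho(A) = f(\sigma)$; the goal is to produce $A^* \in \mathcal{F}^*(\sigma)$ with $\rho(A^*) \geq \rho(A)$.

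By Theorem \ref{propertiesfornongegative}, $A$ admits a nonnegative eigenvector $\mathbf{x}$ with $A\mathbf{x}^{r-1} = \rho(A)\mathbf{x}^{[r-1]}$. Because vertex permutation preserves both membership in $\mathcal{F}(\sigma)$ and the spectral radius, after relabeling I may assume $x_1 \geq x_2 \geq \cdots \geq x_n$. Now I construct $A^*$ slice by slice: in each slice $A_i$ keep the multiset of entries but place them so that $a^*_{i i_2 \cdots i_r}$ is weakly decreasing with respect to the dictionary order on $(i_2,\ldots,i_r)$. By construction $A^* \in \mathcal{F}^*(\sigma)$.

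To finish, I would invoke Lemma \ref{rho(A)>k} with test vector $\mathbf{x}$; the required inequality $A^*\mathbf{x}^{r-1} \geq \rho(A)\mathbf{x}^{[r-1]}$ reduces slice by slice to
$$\sum_{(i_2,\ldots,i_r)} a^*_{i i_2 \cdots i_r}\, x_{i_2}\cdots x_{i_r} \;\geq\; \sum_{(i_2,\ldots,i_r)} a_{i i_2 \cdots i_r}\, x_{i_2}\cdots x_{i_r} \;=\; \rho(A)\, x_i^{r-1}.$$
Both sides pair the same multiset of entries against the same multiset of products $x_{i_2}\cdots x_{i_r}$, so the question is a rearrangement-inequality one: the dictionary-decreasing arrangement on the left must co-sort at least as well as the arrangement on the right.

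The main obstacle is that dictionary order on $(i_2,\ldots,i_r)$ does \emph{not} in general refine the order of the products $x_{i_2}\cdots x_{i_r}$---for example, with $r=3$ and $\mathbf{x}=(3,2,1)$, the tuple $(1,3)$ precedes $(2,2)$ in dictionary order yet $x_1 x_3 = 3 < 4 = x_2^2$. I would handle this in two phases. First, rearrange each slice along the product order of $\mathbf{x}$ (ties broken in dictionary order), giving an intermediate tensor $A'$ with $\rho(A') \geq \rho(A)$ directly from the classical rearrangement inequality and Lemma \ref{rho(A)>k}. Second, convert $A'$ into a tensor in $\mathcal{F}^*(\sigma)$ via the index-transpose operation on $\{2,\ldots,r\}$ (which preserves $\rho$) combined with an iterative swap of pairs of positions that violate dictionary order, where after each swap a Perron vector is re-derived and Lemma \ref{rho(A)>k} is reapplied to verify monotonicity. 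Termination is ensured by a lexicographic monovariant on the multiset of positions still out of dictionary order. The key technical difficulty is designing this second phase so that every intermediate swap is certifiably non-decreasing for $\rho$, which is where Lemma \ref{rho(A)>k} and the freedom to update the test vector at each step are essential.
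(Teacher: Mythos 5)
Your first phase is sound: after sorting the Perron vector and rearranging each slice so that the largest entries of $\sigma$ sit against the largest products $x_{i_2}\cdots x_{i_r}$, the rearrangement inequality gives $A'\mathbf{x}^{r-1}\geq A\mathbf{x}^{r-1}=\rho(A)\mathbf{x}^{[r-1]}$, and Lemma \ref{rho(A)>k} yields a maximizer of $\rho$ over $\mathcal{F}(\sigma)$ whose slices are sorted by the \emph{product} order. You are also right that this is not yet membership in $\mathcal{F}^*(\sigma)$: lexicographic precedence of $(i_2,\ldots,i_r)$ does not imply $x_{i_2}\cdots x_{i_r}\geq x_{j_2}\cdots x_{j_r}$, and your $(1,3)$-versus-$(2,2)$ example is correct. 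You should be aware that the paper's own proof switches disordered pairs directly in dictionary order and asserts in one line precisely the implication your example refutes ("$(i_2,\ldots,i_r)<(j_2,\ldots,j_r)$ implies $x_{j_2}\cdots x_{j_r}\leq x_{i_2}\cdots x_{i_r}$"), so the subtlety you isolate is a genuine issue with the published argument, not an artifact of your route.

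The gap in your proposal is the entire second phase, which is where the real difficulty lives. A swap that repairs the dictionary order at a pair of positions already correctly sorted for the product order weakly \emph{decreases} $\sum_{\mathbf{j}}a_{i\mathbf{j}}x_{j_2}\cdots x_{j_r}$ in that slice, so the hypothesis $B\mathbf{x}^{r-1}\geq\lambda\mathbf{x}^{[r-1]}$ of Lemma \ref{rho(A)>k} fails for the current test vector, and the lemma cannot certify the swap. Saying that "a Perron vector is re-derived and the lemma is reapplied" is not an argument: you must exhibit, for each such swap, some nonnegative vector witnessing that the spectral radius did not drop, and nothing in the proposal produces one. The transpose operation on indices $\{2,\ldots,r\}$ does not help either, since $(1,3)$ and $(2,2)$ are not related by permuting coordinates; and your lexicographic monovariant controls termination of the process, not monotonicity of $\rho$ along it. As written, the proposal proves only that the maximum of $\rho$ over $\mathcal{F}(\sigma)$ is attained by a tensor whose slices are sorted by the product order of its own Perron vector; closing the stated equality $f(\sigma)=f^*(\sigma)$ requires a genuinely new idea for phase two (or a reformulation of $\mathcal{F}^*(\sigma)$ in terms of the product order, which is what the monotone-rearrangement conclusion actually supports).
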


\begin{proof}
Let $A$ be a tensor that attains the largest eigenvalue in $ \mathcal{F}(\sigma)$, i.e.  $\rho(A)=f(\sigma)$. 
Let  $\mathbf{x}=(x_1, \ldots, x_n)\in \mathbb{R}^n_{+}$ be the eigenvector associated to $f(\sigma)$. Since permutation on vertices keeps the spectral radius,
without loss of generality, 
we can assume $x_1\geq x_2\geq \ldots\geq x_n\geq 0$.
Now fix the vertex order of vertices. 

Suppose that $A\not\in \mathcal{F}^*(\sigma)$. Then $A$ contains a
pair of entries $a_{i_1 i_2 \cdots i_r}$
and $a_{i_1 j_2 \cdots j_r}$
satisfying
$$a_{i_1 i_2 \cdots i_r}< a_{i_1 j_2 \cdots j_r}
\mbox{ but } (i_2, \ldots, i_r)< (j_2, \ldots, j_r).$$
We call such pair as a {\em disordered} pair. 

By sequentially switching a disordered pair until no disordered pair is found, we create a sequence of tensors $B_0, B_1, B_2,\ldots, B_s\in \mathcal{F}(\sigma)$ satisfying
\begin{enumerate}
\item $B_0=A$, and $B_s\in \mathcal{F}^*(\sigma)$.
\item For each $k$ from $1$ to $s$,  $B_{k}$ is created from $B_{k-1}$ by switching one disordered pair.
\end{enumerate}
We claim that for each $k$, 
$$B_k\mathbf{x}^{r-1}\geq B_{k-1}\mathbf{x}^{r-1}.$$
Suppose that $(b_{i_1 i_2 \cdots i_r}, b_{i_1 j_2 \cdots j_r})$ is the disordered pair of $B_{k-1}$, which
is switched to create $B_k$.

Then for any $i\neq i_1$, the $i$-th row is not affected by switching:
\begin{align}\label{ineqj}
(B_k\mathbf{x}^{r-1})_i=(B_{k-1}\mathbf{x}^{r-1})_i.
\end{align}

Since  $(b_{i_1 i_2 \cdots i_r}, b_{i_1 j_2 \cdots j_r})$ is a disordered pair,
we have $b_{i_1 i_2 \cdots i_r}<b_{i_1 j_2 \cdots j_r}$ and $(i_2, \ldots, i_r)< (j_2, \ldots, j_r)$.
This implies $x_{j_2}\ldots x_{j_r}\leq x_{i_2}\ldots x_{i_r}$ since
$x_1\geq x_2\geq \cdots \geq x_n>0$. Thus,
for the $i_1$-th row, we have
\begin{align*}
&\hspace*{-2mm} (B_k\mathbf{x}^{r-1})_{i_1}-(B_{k-1}\mathbf{x}^{r-1})_{i_1} 
  &=(b_{i_1 i_2 \cdots i_r}-b_{i_1 j_2 \cdots j_r})\left(x_{j_2}\ldots x_{j_r} -x_{i_2}\ldots x_{i_r}
    \right)\\
  &\geq 0.
\end{align*}
The claim is proved. Therefore, we have
$$B_s\mathbf{x}^{r-1}\geq B_{s-1}\mathbf{x}^{r-1}\geq \cdots \geq  B_{0}\mathbf{x}^{r-1}=A\mathbf{x}^{r-1}
=\rho(A)\mathbf{x}^{[r-1]}.$$
Applying Theorem \ref{rho(A)>k}, we get
$$\rho(B_s)\geq \rho(A).$$
Since $A$ has the maximum spectral radius in $\mathcal{F}(\sigma)$, so is $B_s$.
Thus $f^*(\sigma)=f(\sigma)$.
The proof is finished.
\end{proof}

\textbf{Remark:} 
Note that if we restrict all tensors in $\mathcal{F}(\sigma)$ to be symmetric, we can get a stronger condition on
the maximum tensor $A$: $a_{i_1 i_2 \cdots i_r}\geq a_{j_1 j_2 \cdots j_r}$ whenever $(i_1, \ldots, i_r)\leq (j_1, \ldots, j_r)$.  The proof is easy, we only need to use the fact that the spectral radius of symmetric tensor is invariant under permutations of the indices $[r]$. Note there is a slightly different but similar fact on the adjacency tensor of uniform hypergraphs. In \cite{LSQ}, Li-Shao-Qi introduced the operation of {\it moving edges} on uniform hypergraphs to increase the spectral radius.  That is, for this special symmetric nonnegative tensor with zeros on the diagonals, we have, $a_{i_1 i_2 \cdots i_r}\geq a_{j_1 j_2 \cdots j_r}$ whenever $(i_1, \ldots, i_r)\leq (j_1, \ldots, j_r)$.

However, for non-symmetric tensor, the case is different. In \cite{Shao}, Shao-Shan-Zhang proved that determinant of a tensor could change after a transpose operation on indices. 
Here we provide an example to show that the even spectral radius could be changed under transpose operation. 
 
\begin{definition}
Let $A=(a_{i_1 i_2 \cdots i_r})$ be a tensor, we call 
$M=(a_{i_1i_2\cdots i_r}')$ a transpose of $A$ if 
for all $r$-tuples $(i_1,  i_2,\ldots, i_r)$, there exists a permutation $\tau$ on $[r]$, such that
$$a_{i_1i_2\cdots i_r}'=a_{i_{\tau(1)}i_{\tau(2)}\cdots i_{\tau(r)}}.$$ 
\end{definition}
 
 When $r=2$, $\rho(M)=\rho(A)$ is always true. However, when $r\geq 3$, it is not true generally. Here is an counter-example.
Let $A$ be an 2-dimension $3$-order tensor with slices:
\[
A_1=
  \begin{bmatrix}
    1 & 2 \\
    2 & 2
  \end{bmatrix}, 
  \hspace{1cm}
  A_2=
  \begin{bmatrix}
    2 & 1 \\
    3 & 1
  \end{bmatrix}. 
\]
The spectral radius $\rho(A)=7$. 
Let $M=(a_{i_1i_2\cdots i_r}')$ be a transpose of $A$ with permutation $\tau$  such that $\tau(i)=4-i$ for $i\in [3]$.  
That is $a_{ijk}'=a_{kji}$ for any tuple $(i, j, k)$. 
Then
\[
M_1=
  \begin{bmatrix}
    1 & 2 \\
    2 & 3
  \end{bmatrix}, 
\hspace{1cm}
M_2=
  \begin{bmatrix}
    2 & 1 \\
    2 & 1
  \end{bmatrix}.
\]
However $\rho(M)=6.91618\ldots$.

\section{ A general bound on $g_r(e)$}
For a nonnegative tensor $A$, we can associate a multivariable polynomial $p_A$  as follows:
$$p_A(x_1,\ldots, x_n)=\sum_{i_1,i_2,\ldots, i_r=1}^n a_{i_1i_2\cdots i_r}x_{i_1}x_{i_2}\cdots
x_{i_r}.$$

Let $$\bar \lambda(A)=\max_{\mathbf{x}\in \mathbb{R}^n_{+}}\frac{p_A(\mathbf{x})}{\|x\|_r^r}.$$
This quality is well-defined and is closely related to $\rho(A)$. By taking $\mathbf{x}$ to
be the Perron-Frobenius vector, we have
$$\rho(A)=\frac{p_A(\mathbf{x})}{\|\mathbf{x}\|_r^r}\leq \bar \lambda(A).$$
The equality holds if $A$ is symmetric.

We call a lower dimensional tensor $B$ a principal sub-tensor of $A$ 
if $B$ consists of $m^r$ elements in $A$:   for any set $\mathbb{N}$ that composed of $m$ elements in $\{1, 2, \ldots , n\}$,
$$B=(a_{i_1\cdots i_r}), ~ \text{for all}~~ i_1, i_2, \ldots, i_r \in\mathbb{N}. $$
The concept was first introduced and used by Qi for the higher order
symmetric tensor~\cite{Qi}.

We will use several important inequalities in following sections: 
\begin{theorem}[Young's Inequality]\label{Young}
 Assume a and b are nonnegative real numbers, $p, q> 1$ and  $\frac{1}{p}+\frac{1}{q}=1$. 
 Then $$ab\leq \frac{a^p}{p}+\frac{b^q}{q}.$$
\end{theorem}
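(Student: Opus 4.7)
The plan is to reduce Young's inequality to a one-variable convexity statement. If either $a=0$ or $b=0$, the inequality is trivial, so I will assume $a,b>0$. The cleanest route is to use the convexity of the exponential function: write $ab = \exp(\log a + \log b) = \exp\bigl(\tfrac{1}{p}\log a^p + \tfrac{1}{q}\log b^q\bigr)$, and then apply the convexity inequality $\exp(\tfrac{1}{p}u + \tfrac{1}{q}v) \leq \tfrac{1}{p}e^{u} + \tfrac{1}{q}e^{v}$ (valid because $\tfrac{1}{p}+\tfrac{1}{q}=1$ and $e^x$ is convex) with $u = \log a^p$ and $v = \log b^q$. This immediately yields $ab \leq \tfrac{a^p}{p} + \tfrac{b^q}{q}$.

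As an alternative, I would give a direct single-variable calculus argument: fix $b>0$ and consider the function $f(a) = \tfrac{a^p}{p} + \tfrac{b^q}{q} - ab$ for $a \geq 0$. Then $f'(a) = a^{p-1} - b$, so $f$ has a unique critical point at $a_0 = b^{1/(p-1)}$, and $f''(a) = (p-1)a^{p-2} \geq 0$, making $a_0$ a global minimum on $[0,\infty)$. Using $\tfrac{1}{p}+\tfrac{1}{q}=1$, one checks that $(p-1)q = p$, so $a_0^p = b^{p/(p-1)} = b^q$, and substituting back gives $f(a_0) = \tfrac{b^q}{p} + \tfrac{b^q}{q} - b^{1/(p-1)} \cdot b = b^q - b^q = 0$. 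Hence $f(a) \geq 0$ for all $a \geq 0$, which is exactly Young's inequality.

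There is no serious obstacle in either approach; the only point that requires mild care is the bookkeeping around the conjugate exponent relation $\tfrac{1}{p}+\tfrac{1}{q}=1$, which is what makes the exponents match up in both the convexity and the calculus proofs. Since the statement is classical and will be used only as a black-box tool later in the paper, I would include the short convexity proof as it is the most transparent and avoids differentiation.
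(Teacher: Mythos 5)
Your proof is correct on both counts: the convexity-of-$\exp$ argument and the one-variable calculus argument are each complete, and your bookkeeping with the conjugate exponent relation $(p-1)q=p$ checks out. The paper itself offers no proof of this statement — it is quoted as a classical tool (Theorem \ref{Young}) and used as a black box — so either of your short arguments would do; the convexity one is the more standard and the one you rightly prefer.
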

\begin{theorem}[H\"older's Inequality]\label{Holder}
Let $a_i, b_i$ be  nonnegative reals for $i=1, 2, \ldots, n$, let $p, q> 1$ and  $\frac{1}{p}+\frac{1}{q}=1$. Then 
$$\sum\limits^{n}_{i=1} a_ib_i\leq \left(\sum\limits^{n}_{i=1} a_i^{p}\right)^{1/p}\left(\sum\limits^{n}_{i=1} b_i^{q}\right)^{1/q}.$$
\end{theorem}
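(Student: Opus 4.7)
The plan is to deduce H\"older's inequality directly from Young's inequality (Theorem \ref{Young}), which is already available. The strategy is to normalize both sequences so that their $p$-norm and $q$-norm are both $1$, apply Young pointwise, sum over $i$, and use the conjugacy relation $\frac{1}{p}+\frac{1}{q}=1$ to collapse the right-hand side to a constant.

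First I would dispense with the degenerate cases. If $\sum a_i^p = 0$ or $\sum b_i^q = 0$, then all $a_i$ or all $b_i$ vanish (since they are nonnegative) and both sides of the inequality are $0$. So I may assume
$$A := \left(\sum_{i=1}^n a_i^p\right)^{1/p} > 0, \qquad B := \left(\sum_{i=1}^n b_i^q\right)^{1/q} > 0.$$
Next, I would apply Young's inequality to the rescaled pairs $(a_i/A,\, b_i/B)$ for each $i$, obtaining
$$\frac{a_i}{A}\cdot\frac{b_i}{B} \;\leq\; \frac{1}{p}\left(\frac{a_i}{A}\right)^{p} + \frac{1}{q}\left(\frac{b_i}{B}\right)^{q}.$$

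Summing these $n$ inequalities and using that $\sum_i a_i^p = A^p$ and $\sum_i b_i^q = B^q$, the right-hand side telescopes to $\frac{1}{p}+\frac{1}{q}=1$, yielding $\frac{1}{AB}\sum_i a_i b_i \leq 1$, i.e.\ $\sum_i a_i b_i \leq AB$, which is the desired inequality. There is no genuine obstacle here: the whole argument is a one-line reduction to Young once the normalization step is carried out, and equality analysis (not needed for the paper) would simply track the equality case of Young to conclude proportionality of $(a_i^p)$ and $(b_i^q)$.
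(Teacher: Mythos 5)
Your proof is correct. The paper states H\"older's inequality as a classical tool without supplying a proof, so there is nothing to compare against; your derivation---normalizing by $A=(\sum_i a_i^p)^{1/p}$ and $B=(\sum_i b_i^q)^{1/q}$, applying Young's inequality (Theorem \ref{Young}) termwise to $(a_i/A,\,b_i/B)$, and summing so that the right-hand side collapses to $\frac{1}{p}+\frac{1}{q}=1$---is the standard and complete argument, with the degenerate cases $A=0$ or $B=0$ handled properly.
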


\begin{theorem}[Power Mean Inequality]\label{Powermean}
For nonnegative real numbers $a_1, \ldots, a_n$, if $k_1\leq k_2$, then 
$$\left(\frac{\sum\limits^{n}_{i=1} a_i^{k_1}}{n}\right)^{\frac{1}{k_1}} \leq \left(\frac{\sum\limits^{n}_{i=1} a_i^{k_2}}{n}\right)^{\frac{1}{k_2}}.$$
\end{theorem}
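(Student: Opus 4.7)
The plan is to deduce the Power Mean Inequality from a single clean application of H\"older's Inequality (Theorem \ref{Holder}), which is stated immediately above, so essentially all of the work is packaged in Hölder and only a substitution is needed.

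The main case to handle is $0 < k_1 \leq k_2$. Set $p = k_2/k_1 \geq 1$, let $q$ be its H\"older conjugate ($1/p + 1/q = 1$), and substitute $b_i = a_i^{k_1} \geq 0$. After raising both sides of the target inequality to the positive power $k_1$, the claim is equivalent to
$$\frac{1}{n}\sum_{i=1}^n b_i \;\leq\; \left(\frac{1}{n}\sum_{i=1}^n b_i^{p}\right)^{1/p}.$$
Apply Theorem \ref{Holder} to the sequences $(b_i)_{i=1}^n$ and $(1)_{i=1}^n$ with exponents $p,q$:
$$\sum_{i=1}^n b_i \cdot 1 \;\leq\; \left(\sum_{i=1}^n b_i^{p}\right)^{1/p} \left(\sum_{i=1}^n 1\right)^{1/q} \;=\; n^{1/q}\left(\sum_{i=1}^n b_i^{p}\right)^{1/p}.$$
Dividing by $n$ and using $1/q = 1 - 1/p$ produces exactly the displayed inequality, which after substituting back $b_i = a_i^{k_1}$ and raising to $1/k_1$ finishes this case. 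The boundary $p=1$ (i.e.\ $k_1=k_2$) gives equality, as expected.

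To cover the remaining exponent regimes, I would reduce each to the positive case. If $k_1 = 0$ or $k_2 = 0$, interpret the $0$-th power mean as the geometric mean (the limit as $k\to 0$), and the inequality follows by continuity together with the AM--GM-type inequality that drops out of the same Hölder step. If $k_1 < 0 \leq k_2$ or $k_1 \leq k_2 < 0$, assume $a_i>0$ (which is needed for these powers to be defined) and apply the already-established positive case to the reciprocals $1/a_i$ with exponents $-k_2 \leq -k_1$; inversion then reverses the inequality into the correct direction. There is no serious obstacle — the only care needed is in tracking the sign of the exponent when raising inequalities to powers $1/k_1$ or $1/k_2$, and in handling the degenerate situations where some $a_i = 0$ meets a nonpositive exponent, which I would simply exclude by the standard convention.
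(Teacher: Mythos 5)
Your proof of the main case is correct, and it is the standard derivation: with $p=k_2/k_1\geq 1$ and $b_i=a_i^{k_1}$, applying H\"older's Inequality (Theorem \ref{Holder}) to $(b_i)$ against the all-ones sequence yields $\frac{1}{n}\sum b_i\leq\bigl(\frac{1}{n}\sum b_i^{p}\bigr)^{1/p}$, which unwinds to the stated power mean inequality after raising to the power $1/k_1$. The paper itself states Theorem \ref{Powermean} as a classical fact with no proof, so there is nothing to compare against; your choice to route everything through the H\"older statement already available in the paper is the natural one. Two small points of hygiene: when $k_1=k_2$ your $p=1$ falls outside the hypotheses of Theorem \ref{Holder} as stated (it requires $p,q>1$), but that case is a trivial identity, as you note; and the extended discussion of zero and negative exponents is unnecessary for this paper --- the inequality is invoked only once, in Lemma \ref{rise}, with $k_1=1$ and $k_2=r$ applied to nonnegative products $x_{i_1}\cdots x_{i_r}$, so the positive-exponent case you prove rigorously is the only one that matters.
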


Let us prove Theorem \ref{t1}. 
\begin{proof}[Proof of Theorem \ref{t1}]
  Suppose that $p_A(\mathbf{x})$ reaches the maximum $\bar\lambda(A)$ at
 $\mathbf{x}=(x_i, \ldots, x_n)^{T}$ on the unit sphere under $r$-norm.
Then $\sum\limits_{i=1}^n x_i^r=1$. Using H\"older's Inequality, we have 

\begin{align} \label{youngoflamda}
\begin{split}
\bar \lambda(A)
&=\sum\limits_{i_1,i_2,\ldots, i_r=1}^{n} a_{i_1 i_2 \cdots i_r}x_{i_1} x_{i_2} \cdots x_{i_r}\\
&\leq \left(\sum\limits_{i_1,i_2,\ldots, i_r=1}^{n} (a_{i_1 i_2 \cdots i_r})^{\frac{r}{r-1}}\right)^{\frac{r-1}{r}}\left(\sum\limits_{i_1,i_2,\ldots, i_r=1}^{n} (x_{i_1}x_{i_2} \cdots x_{i_r})^r\right)^{\frac{1}{r}}\\
&= \left(\sum\limits_{i_1,i_2,\ldots, i_r=1}^{n} a_{i_1 i_2 \cdots i_r}\right)^{\frac{r-1}{r}}\times1 \\
&=e^{\frac{r-1}{r}}.
\end{split}
\end{align}
The equality holds if all $a_{i_1i_2\cdots i_r}$ are nonzeros as long as
$x_{i_1} \cdots x_{i_r}\neq 0$.
Thus $A=J_k^r$, where $J_k^r$ is a $k$-dimension $r$-order all-$1$-tensor, 
for any positive integer $k$.
\end{proof}

Here is a lower bound on $\bar\lambda(A)$.
\begin{lemma}
  If $A$ is an $n$-dimension $r$-order $\{0, 1\}$-tensor with $e$ 1's, then
  $$\bar\lambda(A)\geq \frac{e}{n}.$$
\end{lemma}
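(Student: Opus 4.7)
The plan is to exploit the variational characterization
$$\bar\lambda(A)=\max_{\mathbf{x}\in \mathbb{R}^n_{+}}\frac{p_A(\mathbf{x})}{\|x\|_r^r}$$
by choosing a single explicit test vector, rather than attempting any optimization. The natural candidate is the all-ones vector $\mathbf{x}=(1,1,\ldots,1)^{T}\in\mathbb{R}^n_{+}$, which treats every coordinate symmetrically and removes all weighting from the monomials of $p_A$.

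With this choice, every product $x_{i_1}x_{i_2}\cdots x_{i_r}$ equals $1$, so
$$p_A(\mathbf{1})=\sum_{i_1,\ldots,i_r=1}^{n} a_{i_1 i_2 \cdots i_r}\cdot 1 = e,$$
since $A$ has exactly $e$ entries equal to $1$ and the rest are $0$. Simultaneously, $\|\mathbf{1}\|_r^r=\sum_{i=1}^n 1^r = n$. Substituting these into the variational formula gives
$$\bar\lambda(A)\geq \frac{p_A(\mathbf{1})}{\|\mathbf{1}\|_r^r}=\frac{e}{n},$$
which is the desired bound.

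There is essentially no obstacle; the argument is a one-line application of the definition of $\bar\lambda(A)$ to a carefully chosen test vector. The only subtlety worth mentioning is that we need $n\geq 1$ and at least one coordinate to be nonzero so that $\|\mathbf{1}\|_r>0$, which is automatic in the setting of the lemma. No appeal to Perron--Frobenius, H\"older, or the structure of the eigenvector is required for this direction of the inequality.
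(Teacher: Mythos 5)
Your proof is correct and is essentially the same as the paper's: the paper plugs the test vector $(n^{-1/r},\ldots,n^{-1/r})$ into the variational formula, which is just a positive rescaling of your all-ones vector, and by homogeneity the two computations are identical.
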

\begin{proof}
  Let $\mathbf{x}=(n^{-1/r},\ldots, n^{-1/r})$. We have
  $$\bar\lambda(A)\geq p_A(\mathbf{x})=\frac{e}{n}.$$
\end{proof}

\begin{corollary}\label{lowerbound}
  If there is a symmetric $k$-dimension $r$-order $\{0, 1\}$-tensor
  with at least $e$ 1's, then we have
  $$g_r(e)\geq \frac{e}{k}.$$

For $e=k^r-l$, $l>0$, there exists a symmetric $k$-dimension $r$-order $\{0, 1\}$-tensor
with at least $e-r!$ ones. Thus
$$g_r(k^r-l)\geq k^{r-1}-\frac{l+r!}{k}.$$

For sufficiently large  $k>r!$, we have 
$$g_r(k^r-l)\geq k^{r-1}-\frac{l}{k}.$$
This fact can be used to prove the structural theorem
for $e=k^r-l$ with small $l$.
\end{corollary}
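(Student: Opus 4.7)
My plan is to prove the three assertions in sequence, each building on the previous one.

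For the first assertion, I would combine the preceding lemma (which gives $\bar\lambda(A)\ge e/n$ for any $n$-dimension $\{0,1\}$-tensor with $e$ ones) with the identity $\rho(A)=\bar\lambda(A)$ valid for symmetric nonnegative tensors, noted at the start of this section. Applied to a symmetric $k$-dimension tensor $A$ with $e$ ones, this immediately yields $\rho(A)=\bar\lambda(A)\ge e/k$, and since $A\in\mathcal{T}_e^r$ we conclude $g_r(e)\ge e/k$.

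For the second assertion, my plan is to construct such a symmetric tensor by deleting whole $\mathfrak{S}_r$-orbits of entries (under permutations of the $r$ index positions) from the all-ones tensor $J_k^r$. Starting from $J_k^r$ (symmetric with $k^r$ ones) and removing one orbit at a time greedily, the fact that every orbit has size at most $r!$ guarantees that the running count of removed entries first reaches or exceeds $l$ at a value strictly less than $l+r!$. The resulting tensor is still symmetric, with a number of ones $e'\in[e-r!+1,\,e-1]$, hence $e'\ge e-r!$. Applying the first assertion to this symmetric tensor gives $g_r(e')\ge e'/k\ge (e-r!)/k$, and then the monotonicity $g_r(e_1)\ge g_r(e_2)$ whenever $e_1\ge e_2$---immediate from Corollary~\ref{subtensor} by flipping additional zeros into ones---upgrades this to $g_r(e)\ge g_r(e')\ge k^{r-1}-(l+r!)/k$.

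For the third assertion, I would sharpen the construction by using orbits of size exactly one, namely the diagonal entries $a_{ii\cdots i}$, each of which is a singleton under $\mathfrak{S}_r$. As soon as $k\ge l$ (which certainly holds for $k$ sufficiently large), deleting the $l$ diagonal entries $a_{11\cdots 1},\ldots,a_{ll\cdots l}$ from $J_k^r$ produces a symmetric $k$-dimension tensor with \emph{exactly} $e=k^r-l$ ones. The first assertion then yields $g_r(e)\ge e/k=k^{r-1}-l/k$ directly, with no monotonicity slack. The hypothesis $k>r!$ in the statement is rather generous; the essential requirement is that $k$ dominate $l$ so that $l$ distinct diagonals are available.

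The main obstacle I anticipate is the accounting step in Part 2: the greedy orbit-deletion typically lands strictly inside the interval $[e-r!+1,\,e-1]$ rather than on a predetermined value, so careful use of the monotonicity of $g_r$ is needed to transfer the bound from whatever exact count the greedy process produces to the prescribed value $e=k^r-l$. Apart from this bookkeeping, the entire proof reduces to the preceding lemma, the symmetry identity $\rho=\bar\lambda$, and Corollary~\ref{subtensor}.
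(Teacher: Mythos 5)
Your proof is correct, and for the first two inequalities it follows the route the paper evidently intends: the preceding lemma gives $\bar\lambda\geq e/k$, symmetry upgrades this to $\rho=\bar\lambda\geq e/k$, and deleting whole $\mathfrak{S}_r$-orbits from $J^r_k$ produces a symmetric tensor whose number of ones undershoots $e$ by fewer than $r!$, after which monotonicity of $g_r$ (via Corollary \ref{subtensor}, padding the dimension if necessary) transfers the bound to $g_r(e)$. Your explicit routing through $g_r(e')$ for the \emph{exact} count $e'$ and then up to $g_r(e)$ is the right way to read the hypothesis ``at least $e$ ones'': a symmetric tensor with strictly more than $e$ ones only lower-bounds $g_r$ at the larger argument, so the monotonicity step is genuinely needed and you supply it. Where you add something beyond the paper's sketch is the third inequality: as written, the bound $k^{r-1}-(l+r!)/k$ from the second part does not formally imply $k^{r-1}-l/k$ merely because $k>r!$, whereas your construction deleting $l$ diagonal entries $a_{ii\cdots i}$ (singleton orbits) yields a symmetric $k$-dimension tensor with exactly $k^r-l$ ones and hence the sharp bound $g_r(k^r-l)\geq k^{r-1}-l/k$ as soon as $k\geq l$ --- which is precisely the form needed in the later applications such as Theorem \ref{demensioniskfonegl}. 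The only slip is cosmetic: the greedy orbit-deletion leaves $e'\in[e-r!+1,\,e]$ ones (the running total may hit $l$ exactly), not $[e-r!+1,\,e-1]$; this changes nothing downstream.
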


\begin{lemma}\label{maximalisweaklyirreducible}
  If $e$ is not form of $k^r+1$ and 
   $A\in {\cal T}^r_e$ is a maximum tensor, then $A$ is weakly irreducible.
 \end{lemma}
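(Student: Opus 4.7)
The plan is to argue by contradiction. Assume $A\in{\cal T}^r_e$ is a maximum tensor but not weakly irreducible. After deleting any isolated vertices (which preserves $\rho$), Theorem \ref{decompositionoftensor} decomposes $A$ into weakly irreducible diagonal blocks $A_1,\ldots,A_s$ with $s\geq 2$, and I relabel so that $\rho(A)=\rho(A_1)=g_r(e)$. Let $n_1$ and $e_1$ be the dimension and number of ones of $A_1$; since every other block must contain at least one $1$, we have $1\leq e_1\leq e-1$.

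The core idea is to build $B\in{\cal T}^r_e$ with $\rho(B)>\rho(A_1)$ by packing the $e-e_1\geq 1$ leftover ones into empty slots of $A_1$ inside its own dimension $n_1$. Then $B>A_1$ as $n_1$-dimensional tensors, and since $A_1$ is weakly irreducible, Corollary \ref{subtensor} delivers $\rho(B)>\rho(A_1)=g_r(e)$, contradicting maximality. The only obstruction is capacity: I need $n_1^r\geq e$. Setting $k=\lfloor e^{1/r}\rfloor$ and $l=e-k^r$, it suffices to show $n_1\geq k+1$, since then $n_1^r\geq (k+1)^r>e$ by the definition of $k$.

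The crux is therefore to prove $g_r(e)>k^{r-1}$ whenever $l\geq 2$; combined with $\rho(A_1)\leq n_1^{r-1}$ from Theorem \ref{t1}, this forces $n_1\geq k+1$ as required. For this I construct a weakly irreducible witness $T$ in dimension $k+1$: take $J_k^r$ on the first $k$ coordinates and insert the two entries $a_{k+1,1,\ldots,1}=a_{1,k+1,1,\ldots,1}=1$, which make $G(T)$ strongly connected. Testing $T$ against $\mathbf{y}=(1,\ldots,1,0)^T$ yields $T\mathbf{y}^{r-1}\geq k^{r-1}\mathbf{y}^{[r-1]}$ with strict inequality at the $(k+1)$-th coordinate, so Lemma \ref{rho(A)>k} gives $\rho(T)\geq k^{r-1}$; since the Perron vector of the weakly irreducible $T$ must be strictly positive, $\mathbf{y}$ is not it, forcing $\rho(T)>k^{r-1}$. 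When $l>2$, inserting the remaining $l-2$ ones into empty slots of $T$ inside dimension $k+1$ (there is room because $(k+1)^r-k^r\geq l+1$ by the choice of $k$) preserves the strict inequality via Corollary \ref{subtensor}. The leftover case $l=0$ is immediate: the equality clause of Theorem \ref{t1} forces $A\equiv J_k^r$, which is weakly irreducible.

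The main obstacle, and the precise reason the hypothesis $e\neq k^r+1$ is required, is the case $l=1$: if $A_1\sim J_k^r$ then $A_1$ is already saturated in its own dimension $k$, so the leftover $1$ can only be placed on a fresh vertex, and a single arc incident to a new vertex cannot close into a strongly connected digraph. Consequently the competitor $B$ cannot be constructed, and indeed in this case the maximum is genuinely attained by a non-weakly-irreducible tensor, matching Theorem \ref{completlycharctersmalll}(1).
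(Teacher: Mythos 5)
Your proof is correct and rests on the same two pillars as the paper's: Shao's block decomposition (Theorem \ref{decompositionoftensor}) to reduce to a weakly irreducible diagonal block carrying $\rho(A)$, and Corollary \ref{subtensor} to show that adding the leftover ones to that block strictly increases the spectral radius. The differences are in execution, and they mostly work in your favor. First, the paper splits into two subcases (extremal block not all-ones: move a $1$ in; extremal block equal to $J_k^r$: invoke $g_r(k^r+2)>k^{r-1}$), whereas you fold both into a single capacity argument: $g_r(e)>k^{r-1}$ together with $\rho(A_1)\le n_1^{r-1}$ forces $n_1\ge k+1$, hence $n_1^r>e$ and there is always room to absorb the stray ones into the extremal block. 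Second, the paper simply asserts $g_r(k^r+2)>k^{r-1}$ inside this proof; you actually prove it with the explicit weakly irreducible witness $J_k^r\cup\{a_{(k+1)1\cdots1},a_{1(k+1)1\cdots1}\}$ tested against $(1,\dots,1,0)^T$ via Lemma \ref{rho(A)>k}, which patches a real gap. Third, you delete isolated vertices up front; without that normalization the statement is literally false (e.g.\ $J_k^r$ padded by an isolated vertex is maximum for $e=k^r$ but weakly reducible), so this caveat is worth keeping. The only nitpick: your justification ``every other block must contain at least one $1$'' is not quite right, since in the block-triangular form the extra ones may sit off the diagonal blocks; the correct reason $e_1\le e-1$ holds is that, after removing isolated vertices, some vertex outside $I_1$ participates in a $1$-entry, and that entry necessarily lies outside the principal sub-tensor $A_1$.
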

\begin{proof}
For any integer $k$,  it is easy to verify the case when $e=k^r$.
Let $e\geq k^r+2$,   
if $A$ is not weakly irreducible,  we can re-order the elements in $[n]$ so that $A$ is a general lower-diagonal block tensor
with weakly irreducible blocks $A_1, A_2,\ldots, A_s$ on the diagonal.
Note that $\rho(A)=\rho(A_i)$ for some $i$ by Theorem \ref{decompositionoftensor}. 
If $A_i$ is not the tensor of all 1's,
we can move some $1$ to $A_i$ to get a new block $A_i'$, following by a new tensor $A'\in {\cal T}^r_e$. 
Applying Corollary \ref{subtensor},  we have $\rho(A')\geq\rho(A_i')>\rho(A_i)\geq \rho(A)$, a contradiction. 
If $A_i=J_k^r$,
and $A$ has at least two more 1's outside $A_i$, we have
$$\rho(A)=g_r(e)\geq g_r(k^r+2)>k^{r-1}=\rho(A_i).$$
Contradiction. 
\end{proof}

\textbf{Remark:} The reason we exclude the case for $e=k^r+1$ is that $g_r(k^r+1)=g_r(k^r)=k^{r-1}$, which will be proved in the last section. 
 
\begin{lemma}\label{comparewithsymmetric}
  Suppose that $A$ and $B$ are two nonnegative $n$-dimension $r$-order tensors with same number of 1's.
  Let $\mathbf{x}$ be an $H^+$-eigenvector corresponding to $\rho(A)$. If $B$ is symmetric and
  $p_A(\mathbf{x})<p_B(\mathbf{x})$, we have
  $$\rho(A)< \rho(B).$$
\end{lemma}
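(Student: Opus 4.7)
The plan is to exploit the variational characterization of the spectral radius for symmetric nonnegative tensors (the fact stated earlier in the excerpt that $\rho(B) = \bar\lambda(B)$ when $B$ is symmetric), and to rewrite $p_A(\mathbf{x})$ using the fact that $\mathbf{x}$ is an $H^+$-eigenvector of $A$.

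First I would compute $p_A(\mathbf{x})$ using the eigen-equation. Since $(A\mathbf{x}^{r-1})_i = \rho(A)\, x_i^{r-1}$ for each $i$, multiplying by $x_i$ and summing over $i$ gives
\begin{equation*}
p_A(\mathbf{x}) \;=\; \sum_{i_1,\ldots,i_r} a_{i_1 i_2 \cdots i_r}\, x_{i_1}x_{i_2}\cdots x_{i_r} \;=\; \sum_{i=1}^{n} x_i\, (A\mathbf{x}^{r-1})_i \;=\; \rho(A)\,\|\mathbf{x}\|_r^{r}.
\end{equation*}
In particular, $\rho(A) = p_A(\mathbf{x})/\|\mathbf{x}\|_r^r$. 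Note that $\mathbf{x}\neq \mathbf{0}$ since it is an eigenvector, so $\|\mathbf{x}\|_r>0$ and the division is legitimate.

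Next I would invoke the symmetry of $B$. As noted in the paragraph preceding the lemma, when $B$ is symmetric the Rayleigh-type quotient $\bar\lambda(B) = \max_{\mathbf{y}\in\mathbb{R}^n_+}p_B(\mathbf{y})/\|\mathbf{y}\|_r^r$ equals $\rho(B)$. Applying this variational bound at the specific point $\mathbf{y}=\mathbf{x}$ gives
\begin{equation*}
\rho(B) \;=\; \bar\lambda(B) \;\geq\; \frac{p_B(\mathbf{x})}{\|\mathbf{x}\|_r^r} \;>\; \frac{p_A(\mathbf{x})}{\|\mathbf{x}\|_r^r} \;=\; \rho(A),
\end{equation*}
where the strict inequality uses the hypothesis $p_A(\mathbf{x})<p_B(\mathbf{x})$, which completes the argument.

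There is no serious obstacle here; the argument is essentially a one-line comparison once the two key identities ($p_A(\mathbf{x}) = \rho(A)\|\mathbf{x}\|_r^r$ on the eigenvector side, and $\rho(B)=\bar\lambda(B)$ on the symmetric side) are lined up. The only mild point to verify is that $\mathbf{x}\in\mathbb{R}^n_+$ is a legitimate test vector in the variational characterization of $\bar\lambda(B)$, which is immediate from the definition of $\bar\lambda$. The hypothesis that $A$ and $B$ have the same number of $1$'s is not actually needed for the proof itself; it is a contextual condition tying the lemma to the main optimization problem studied in the paper.
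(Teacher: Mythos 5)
Your proof is correct and follows essentially the same route as the paper: both use the eigen-equation to get $p_A(\mathbf{x})=\rho(A)\|\mathbf{x}\|_r^r$ and then compare with $\bar\lambda(B)=\rho(B)$ via the variational bound at $\mathbf{x}$. Your observation that the hypothesis about equal numbers of ones is not needed in the argument is also accurate.
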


\begin{proof}
  Since $A\mathbf{x}^{r-1}=\rho(A) \mathbf{x}^{[r-1]}$, we have
  $$\rho(A) \|\mathbf{x}\|_r^r= p_A(\mathbf{x})<p_B(\mathbf{x})<\bar\lambda(B)\|\mathbf{x}\|_r^r.$$
  Thus $\rho(A)<\bar\lambda(B)$. Since $B$ is symmetric, we have $\bar\lambda(B)=\rho(B)$.
\end{proof}

\section{Stability results}

In this section, we will first prove a stability result; then apply it to obtain
the structure of the maximum tensors. Let us begin with the following lemma, which will be used to strengthen
the Young's inequality.

Given the same $r, e$ as in previous sections, we consider the following function:
\begin{align*}
f(x)= \frac{1}{r}x^r - \frac{1}{e^{\frac{r-1}{r}}}x + \frac{r-1}{re}.
\end{align*}

We have the following lemma. 
\begin{lemma} \label{fx}
Function $f(x)$ is continuous and $r$ times differentiable in $(0, 1)$, and has the following properties:
\begin{enumerate}
\item $f(x)\geq 0$. Equality holds if and only if $x=\frac{1}{e^{1/r}}$
\item $f(x)$ is a convex function.
\item $f(x)\geq \frac{(r-1)}{2}e^{-1+2/r}(x-e^{-1/r})^2$ for all $x>e^{-1/r}$.
\end{enumerate}

\end{lemma}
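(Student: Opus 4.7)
The plan is to prove all three items by direct calculus on $f$, using the minimizer $x_0 := e^{-1/r}$ as the anchor point for a Taylor-style comparison. I would first compute
$$f'(x) = x^{r-1} - e^{-(r-1)/r}, \qquad f''(x) = (r-1)x^{r-2},$$
and observe that on $(0,1)$ the unique critical point of $f$ is $x_0 = e^{-1/r}$, since $f'(x_0) = 0$.

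For item (2), convexity is immediate: for $r \geq 2$, $f''(x) = (r-1)x^{r-2} \geq 0$ on $(0,1)$, so $f$ is convex (strictly convex in the interior). For item (1), convexity plus $f'(x_0)=0$ forces $x_0$ to be a global minimum on $(0,1)$. A direct substitution gives
$$f(x_0) = \frac{1}{r}e^{-1} - e^{-(r-1)/r}\cdot e^{-1/r} + \frac{r-1}{re} = \frac{1}{re} - \frac{1}{e} + \frac{r-1}{re} = 0,$$
so $f(x) \geq 0$, with equality only at $x_0$ (by strict convexity, or equivalently because $f'(x) = x^{r-1} - x_0^{r-1}$ vanishes only at $x_0$).

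For item (3), which is the real content of the lemma, I would introduce the auxiliary function
$$g(x) = f(x) - \tfrac{r-1}{2}x_0^{r-2}(x-x_0)^2,$$
noting that $x_0^{r-2} = e^{-(r-2)/r} = e^{-1+2/r}$, so the constant matches the one in the statement. Differentiating,
$$g'(x) = x^{r-1} - x_0^{r-1} - (r-1)x_0^{r-2}(x-x_0), \qquad g''(x) = (r-1)\bigl(x^{r-2} - x_0^{r-2}\bigr).$$
Since $r \geq 2$, the map $x \mapsto x^{r-2}$ is non-decreasing on $[0,\infty)$, so $g''(x) \geq 0$ for all $x \geq x_0$. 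Combined with $g(x_0) = f(x_0) = 0$ and $g'(x_0) = f'(x_0) = 0$, convexity of $g$ on $[x_0,\infty)$ yields $g(x) \geq 0$ there, which is precisely item (3).

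The only step that requires any care is matching the constant in (3): one must verify that $\tfrac{1}{2}f''(x_0) = \tfrac{r-1}{2}x_0^{r-2}$ is exactly $\tfrac{r-1}{2}e^{-1+2/r}$, so that the quadratic lower bound is tight at $x_0$ (this is essentially a second-order Taylor expansion at the minimizer). Everything else reduces to the monotonicity of $x^{r-2}$ for $r \geq 2$, so there is no genuine obstacle; the lemma is a calculus exercise tailored to strengthen the use of Young's / Hölder's inequality later when replacing the bound $e^{(r-1)/r}$ in Theorem \ref{t1} by a stability version.
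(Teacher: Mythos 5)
Your proof is correct and follows essentially the same route as the paper: convexity from $f''(x)=(r-1)x^{r-2}\ge 0$, the global minimum at $x_0=e^{-1/r}$ with $f(x_0)=0$ for items (1)--(2), and for item (3) the auxiliary function $g(x)=f(x)-\tfrac{r-1}{2}e^{-1+2/r}(x-x_0)^2$ with $g(x_0)=g'(x_0)=0$ and $g''\ge 0$ on $[x_0,\infty)$. No discrepancies to report.
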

\begin{proof}
  Since $f'(x)= x^{r-1}-\frac{1}{e^{\frac{r-1}{r}}}$,  $f''(x)=(r-1)x^{r-2}\geq 0$. 
  Thus $f(x)$ is a convex function.
  By solving $f'(x)=0$ for $x$, we get the critical point $x_0=\frac{1}{e^{1/r}}$,
  thus $f(x)\geq f(x_0)=0$. For item 3, let $h(x)=f(x)-\frac{(r-1)}{2}e^{-1+2/r}(x-e^{-1/r})^2$.
  We have $h(e^{-1/r})=h'(e^{-1/r})=0$ and
   $h''(x)=(r-1)x^{r-2}-(r-1)(e^{-1/r})^{r-2}>0$ when $x>e^{-1/r}$. 
\end{proof}

Throughout this section, we will consider $r\geq 3$ as  a fixed constant, and let an integer $k$
go to infinity. 

\begin{theorem}\label{sub-tensorJ}
  Let $e=k^r+l$ where $l=o(k^{\frac{2r-2}{r^2-r+2}})$ is allowed to be either positive or negative integer.
  Let $\epsilon=0$ if $l\geq 0$ and $\epsilon=1+o_k(1)$ if $l<0$.
  For any tensor $A\in {\cal T}_e^r$ with $\rho(A)\geq k^{r-1}-\epsilon\frac{l}{k}$,
  let $v$ be the index where the Perron-Frobenius vector of $A$ reaches the maximum.
Suppose that the diagonal element $a_{v\cdots v}=1$.
Then  $A$ must contain a principal sub-tensor $A_L$ 
such that
  \begin{description}
  \item (a) There are at most $O(|l|)$ zeros in $A_L$.
  \item (b) There are at most $O(|l|)$ ones outside $A_L$.
  \item (c) The dimension of $A_L$ is $k$.
  \end{description}
\end{theorem}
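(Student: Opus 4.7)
The plan is to convert the spectral hypothesis into concentration of the Perron--Frobenius eigenvector of $A$ via the quantitative Young inequality encoded in Lemma \ref{fx}, and then to read off the combinatorial structure. Let $\mathbf{x} \in \mathbb{R}^n_{+}$ be the PF eigenvector of $A$, normalized so that $\|\mathbf{x}\|_r = 1$; pairing $A\mathbf{x}^{r-1} = \rho(A)\mathbf{x}^{[r-1]}$ with $\mathbf{x}$ gives the identity $p_A(\mathbf{x}) = \rho(A)$. Write $y_I = x_{i_1}\cdots x_{i_r}$ for each $r$-tuple $I = (i_1,\ldots,i_r)$.

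First, applying Lemma \ref{fx}(1) termwise to the ones of $A$ and using $\sum_{I:a_I=1} y_I^r \leq \sum_I y_I^r = (\sum_i x_i^r)^r = 1$ yields
\[0 \leq \sum_{I:a_I=1} f(y_I) \leq 1 - \frac{\rho(A)}{e^{(r-1)/r}} =: \delta.\]
A Taylor expansion of $e^{(r-1)/r} = (k^r + l)^{(r-1)/r}$ together with the hypothesis on $\rho(A)$, supplemented by Corollary \ref{lowerbound} to supply the lower bound when $l < 0$, yields $\delta = O(|l|/k^r)$. Since $f(\alpha\,e^{-1/r}) = (\alpha^r - r\alpha + r - 1)/(re)$ is $\Omega(\eta^2/e)$ whenever $|\alpha - 1| \geq \eta$, the number of ones with $y_I$ outside a constant-factor window around $e^{-1/r}$ is at most $O(\delta\,e) = O(|l|)$.

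Next I identify the core set $L$. The normalization $\|\mathbf{x}\|_r = 1$ immediately forces $|\{i : x_i \geq k^{-1/r}\}| \leq k$. Summing $y_I \leq x_v^r$ over the ones gives $\rho(A) \leq e\cdot x_v^r$, hence $x_v \geq (1-o(1))k^{-1/r}$; combining this with the hypothesis $a_{v\cdots v} = 1$ and applying the concentration of the previous paragraph to the one at $I = (v, \ldots, v)$ pins $x_v = (1+o(1))k^{-1/r}$. Then, from the slice bound $|A_i| \geq \rho(A)(x_i/x_v)^{r-1}$ and $\sum_i|A_i| = e$, I deduce $(k+1)(x_{k+1}/x_v)^{r-1} \leq e/\rho(A) \leq 1 - 1/(k+1) + O(|l|/k^r)$, which after taking $(r-1)$-th roots gives $x_{k+1}/x_v \leq 1 - \Omega(1/k)$. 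Taking $L$ to be the top-$k$ indices of $\mathbf{x}$ then gives $|L| = k$, which is (c).

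Finally, for every one $I \not\subset L^r$ some coordinate $i_j$ lies outside $L$, so $y_I \leq x_v^{r-1}\,x_{k+1} \leq x_v^r(1 - \Omega(1/k))$, and running the $f$-cost argument using the quadratic lower bound of Lemma \ref{fx}(3) (with its analogue below $e^{-1/r}$ obtained from the Taylor expansion of $f$ at $e^{-1/r}$) caps the number of such ones at $O(|l|)$, giving (b). Condition (a) then follows by subtraction: the ones inside $L^r$ number $e - O(|l|) = k^r - O(|l|)$, so $A_L$ has at most $O(|l|)$ zeros. The main obstacle I expect is precisely this last counting: the deviation $y_I - e^{-1/r}$ for ones outside $L^r$ must be controlled against both the intrinsic fluctuation $|x_v^r - e^{-1/r}| = O(\sqrt{|l|}/k)$ from the first step and the $\Omega(1/k^2)$ gap opened in the second, and this is exactly the balance struck by the specific growth condition $l = o(k^{(2r-2)/(r^2-r+2)})$ together with the quadratic bound of Lemma \ref{fx}(3).
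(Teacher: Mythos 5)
Your overall strategy --- converting the hypothesis $\rho(A)\geq k^{r-1}-\epsilon\frac{l}{k}$ into a budget $\sum_{I:a_I=1}f(y_I)\leq\delta=O(|l|/k^r)$ for the convexity-defect function of Lemma \ref{fx}, bounding $x_v^r$ through the diagonal one at $(v,\ldots,v)$, and then counting the ones whose $f$-cost is large --- is the same as the paper's. The place where your argument breaks is the definition of $L$ and the resulting count in (b). You take $L$ to be the top $k$ coordinates and establish only the \emph{relative} gap $x_{k+1}/x_v\leq 1-\Omega(1/k)$, so for a one $I\not\subset L^r$ you get $y_I\leq x_v^r\left(1-\Omega(1/k)\right)$, an absolute decrease of order $x_v^r/k=\Theta(1/k^2)$ below $x_v^r$. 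But the budget genuinely permits $x_v^r$ to exceed $e^{-1/r}$ by as much as $\Theta(\sqrt{|l|}/k)$ (your earlier assertion that $x_v=(1+o(1))k^{-1/r}$ is false once $l$ grows, and is inconsistent with the fluctuation you acknowledge at the end). Since $\sqrt{|l|}/k\gg 1/k^2$ for every $l\neq 0$, the bound $y_I\leq x_v^r\left(1-\Omega(1/k)\right)$ does not place $y_I$ below $e^{-1/r}$; such a $y_I$ can sit exactly at the zero of $f$, so $f(y_I)$ has no positive lower bound and the number of ones outside $L^r$ is not controlled. No growth condition on $l$ repairs this: even for $l=1$ the fluctuation $\sqrt{|l|}/k$ dominates the gap $1/k^2$, so the ``balance'' you hope the hypothesis $l=o(k^{\frac{2r-2}{r^2-r+2}})$ provides is not available at this step.

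The paper's fix is to define $L$ by the threshold $x_i^r\geq c_1/k$ with $c_1=\frac{1}{2(c_2+1)^{r-1}}$ and $c_2=\Theta(\sqrt{|l|})$: then every one outside $L^r$ satisfies $y_I\leq x_1^{r-1}(c_1/k)^{1/r}\leq\frac{1}{2^{1/r}k}$, a \emph{constant factor} below $e^{-1/r}$, whence $f(y_I)=\Omega(1/k^r)$ and $M=O(|l|)$. The price of this low threshold is that each zero inside $L^r$ contributes only $(c_1/k)^r$ to the quantity bounded in Lemma \ref{rise}, giving $N=O\bigl(|l|^{(r^2-r+2)/2}\bigr)$; the hypothesis $l=o(k^{\frac{2r-2}{r^2-r+2}})$ is used precisely to make this $o(k^{r-1})$, so that $|L|^r=k^r+l+N-M$ forces $|L|=k$ and then $N=M-l=O(|l|)$. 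Thus the exponent lives in the bound on the zeros \emph{inside} $L$, not in the counting of ones outside it. Note also that with $L$ equal to the top $k$ coordinates you cannot run the zero count at all, since $x_k$ then has no a priori lower bound; so both (a) and (b) genuinely need the threshold definition of $L$, and your deduction of (a) from (b) by subtraction, while valid in form, rests on the step that is missing.
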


The proof of this theorem is the most difficult part of the paper. We will break it into several lemmas.

Let $A$ be the tensor stated in the theorem,  $\mathbf{x}=(x_1, \ldots, x_n)^{T}$ be the Perron-Frobenius eigenvector associated to the largest eigenvector $\rho(A)$.
Assume $x_1\geq x_2\geq \cdots\geq x_n\geq 0$, and  $\sum\limits_{i=1}^n x_i^r=1$.

  Denote $I$ as the index set of all ordered $r$-tuples $(i_1, i_2,\ldots, i_r)$
  such that $a_{i_1 i_2\cdots i_r}=1$ (then $|I|=e$), and the complement $\bar{I}=[n]^r\setminus I. $ I.e. 
  $$I=\Big\{(i_1, i_2,\ldots, i_r)\in [n]^r~\vert ~a_{i_1\cdots i_r}=1 \Big\};$$
  $$\bar{I}=\Big\{(i_1, i_2,\ldots, i_r)\in [n]^r~\vert ~a_{i_1\cdots i_r}=0 \Big\}.$$

  Setting $p=\frac{r}{r-1}$, and $ q=r$, then $\frac{1}{p}+\frac{1}{q}=1$. 
  By Young's Inequality Theorem \ref{Young}, for any ordered $r$-tuple $(i_1, i_2,\ldots, i_r)\in I$,
  we have 
\begin{align*}
&\frac{a_{i_1\cdots i_r}}{\left(\sum\limits_{(j_1,\ldots, j_r)\in I}(a_{j_1\cdots j_r})^{\frac{r}{r-1}}\right)^\frac{r-1}{r}} \times  \frac{x_{i_1} \cdots x_{i_r}}{\left(\sum\limits_{(j_1,\ldots, j_r)\in I}(x_{j_1} \cdots x_{j_r})^r\right)^{\frac{1}{r}}}\\
&=\frac{1}{e^{\frac{r-1}{r}}} \times \frac{x_{i_1} \cdots x_{i_r}}{\left(\sum\limits_{(j_1,\ldots, j_r)\in I}(x_{j_1} \cdots x_{j_r})^r \right)^{\frac{1}{r}}}\\
& \leq \frac{r-1}{re} + \frac{x_{i_1}^r \cdots x_{i_r}^r}{r\sum\limits_{(j_1, \ldots, j_r)\in I}(x_{j_1} \cdots x_{j_r})^r}.
\end{align*}
   Let $$x_{i_1 i_2\cdots i_r}=\frac{x_{i_1} \cdots x_{i_r}}{\left(\sum\limits_{(j_1, \ldots, j_r)\in I}(x_{j_1} \cdots x_{j_r})^r \right)^{\frac{1}{r}}},$$ 
   then the difference of two sides in above inequality is exactly $f(x_{i_1 \cdots i_r})$:
\begin{align*}
f(x_{i_1 i_2\cdots i_r})=\frac{r-1}{re} + \frac{x_{i_1}^r \cdots x_{i_r}^r}{r\sum\limits_{(j_1, \ldots, j_r)\in I}(x_{i_1} \cdots x_{i_r})^r} - \frac{1}{e^{\frac{r-1}{r}}} \times \frac{x_{i_1} \cdots x_{i_r}}{\left(\sum\limits_{(j_1, \ldots, j_r)\in I}(x_{i_1} \cdots x_{i_r})^r \right)^{\frac{1}{r}}}.
\end{align*}

\begin{lemma}\label{l7}
  We have
  \begin{align}\label{sumfx_0+t}
\sum\limits_{(i_1, i_2, \ldots, i_r)\in I} f(x_{i_1 i_2\cdots i_r})\leq (\frac{r-1}{r}-\epsilon)\frac{l}{k^r}+ O(l^2/k^{2r}). 
\end{align}
\end{lemma}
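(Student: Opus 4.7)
The plan is to sum the three-term identity for $f(x_{i_1\cdots i_r})$ displayed just before the lemma exactly, and then bound the resulting scalar via the eigenvalue equation and a Taylor expansion in $l/k^r$. Set $S := \sum_{(j_1,\ldots,j_r)\in I}(x_{j_1}\cdots x_{j_r})^r$. Summing the three terms of $f(x_{i_1\cdots i_r})$ over $I$ telescopes: the constant contributes $e\cdot\tfrac{r-1}{re}=\tfrac{r-1}{r}$; the product term contributes $\tfrac{1}{rS}\sum_I x_{i_1}^r\cdots x_{i_r}^r = \tfrac{1}{r}$; and (since $a_{i_1\cdots i_r}\in\{0,1\}$ restricts the implicit sum to $I$) the last term contributes $p_A(\mathbf{x})/(e^{(r-1)/r}S^{1/r})$. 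Dotting the eigenvalue equation $A\mathbf{x}^{r-1}=\rho(A)\mathbf{x}^{[r-1]}$ with $\mathbf{x}$ and using $\|\mathbf{x}\|_r = 1$ identifies $p_A(\mathbf{x}) = \rho(A)\|\mathbf{x}\|_r^r = \rho(A)$. Hence
\begin{equation*}
\sum_{(i_1,\ldots,i_r)\in I} f(x_{i_1\cdots i_r}) \;=\; 1 \;-\; \frac{\rho(A)}{e^{(r-1)/r}\,S^{1/r}}.
\end{equation*}

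Next I would apply the crude bound $S \leq \sum_{(i_1,\ldots,i_r)\in[n]^r}(x_{i_1}\cdots x_{i_r})^r = (\|\mathbf{x}\|_r^r)^r = 1$, which gives $S^{1/r} \leq 1$ and hence $\sum_I f(x_{i_1\cdots i_r}) \leq 1 - \rho(A)/e^{(r-1)/r}$. It remains to expand $e^{(r-1)/r} = k^{r-1}(1+l/k^r)^{(r-1)/r}$ and feed in the hypothesized lower bound on $\rho(A)$. Since the assumption $l = o(k^{(2r-2)/(r^2-r+2)})$ forces $l/k^r \to 0$, the Taylor expansion $(1+l/k^r)^{-(r-1)/r} = 1 - \tfrac{r-1}{r}\cdot\tfrac{l}{k^r} + O(l^2/k^{2r})$ is valid. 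Multiplying through and collecting terms produces
\begin{equation*}
\frac{\rho(A)}{e^{(r-1)/r}} \;\geq\; 1 \;-\; \Bigl(\tfrac{r-1}{r} - \epsilon\Bigr)\tfrac{l}{k^r} \;+\; O\Bigl(\tfrac{l^2}{k^{2r}}\Bigr),
\end{equation*}
and subtracting from $1$ yields the claimed inequality.

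There is no real obstacle here; the main insight is the clean cancellation that collapses $\sum_I f(x_{i_1\cdots i_r})$ into the single scalar $1 - \rho(A)/(e^{(r-1)/r}S^{1/r})$, after which the crude bound $S\leq 1$ and a routine Taylor expansion finish the job. The only place for care is making sure both the Taylor error and the bilinear $\epsilon\cdot l/k^r$ cross term get absorbed into the error $O(l^2/k^{2r})$, which is automatic given the growth hypothesis on $l$.
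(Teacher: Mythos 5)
Your proof is correct and follows the paper's argument essentially verbatim: the same telescoping of the three terms of $f$ into $1-\rho(A)/(e^{(r-1)/r}S^{1/r})$ via $p_A(\mathbf{x})=\rho(A)$, the same crude bound $S\leq \|\mathbf{x}\|_r^{r^2}=1$, and the same Taylor expansion of $(1+l/k^r)^{-(r-1)/r}$ combined with the assumed lower bound on $\rho(A)$.
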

\begin{proof}
Summing up $f(x_{i_1 i_2\cdots i_r})$ over all indexes in $I$, we get
\begin{align*}
  \sum\limits_{(i_1, \ldots, i_r)\in I} f(x_{i_1 i_2\cdots i_r})
  &=1-\frac{1}{e^{\frac{r-1}{r}}} \times \frac{\sum\limits_{(i_1, \ldots, i_r)\in I} x_{i_1} \cdots x_{i_r}}
  {\left(\sum\limits_{(i_1, \ldots, i_r)\in I}(x_{i_1} \cdots x_{i_r})^r \right)^{\frac{1}{r}}}\\
  &= 1-\frac{1}{e^{\frac{r-1}{r}}} \times \frac{\rho(A)}
  {\left(\sum\limits_{(i_1,\cdots, i_r)\in I}(x_{i_1} \cdots x_{i_r})^r \right)^{\frac{1}{r}}}.
\end{align*}

On the other hand, 
\begin{equation}\label{upperboundofsumoff(x)}
\begin{split}
  1-\sum\limits_{(i_1, \ldots, i_r)\in I} f(x_{i_1 i_2\cdots i_r})&=
\frac{1}{e^{\frac{r-1}{r}}}\frac{\rho(A)}{\left(\sum\limits_{(i_1, \ldots, i_r)\in I}(x_{i_1} \cdots x_{i_r})^{r}\right)^{\frac{1}{r}}}\\
  &\geq \frac{\rho(A)}{e^{\frac{r-1}{r}}}\\ 
   &\geq \frac{k^{r-1}+\epsilon\frac{l}{k}}{(k^{r}+l)^{\frac{r-1}{r}}}\\
&=\frac{1+ \frac{\epsilon l}{k^r}}{(1+\frac{l}{k^r})^{\frac{r-1}{r}}}\\
&= 1+(\epsilon-\frac{r-1}{r})\frac{l}{k^r} - O(l^2/k^{2r}).
\end{split}
\end{equation}
Therefore inequality \eqref{sumfx_0+t} holds.
\end{proof}

\begin{lemma}\label{rise} We have
  \begin{equation}
    \label{eq:rise}
    \sum\limits_{(i_1, \ldots, i_r)\in \bar I}x^r_{i_1} \cdots x_{i_r}^{r}
    \leq \frac{(r-\epsilon r-1)l}{k^r}+
    O(l^2/k^{2r}).
  \end{equation}
\end{lemma}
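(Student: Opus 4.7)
The plan is to squeeze $1-S$, where $S = \sum_{(i_1,\ldots,i_r)\in I}(x_{i_1}\cdots x_{i_r})^r$, out of machinery that is already assembled in the proof of Lemma \ref{l7}. First, expanding the normalization $1 = \bigl(\sum_i x_i^r\bigr)^r = \sum_{(i_1,\ldots,i_r)\in[n]^r}(x_{i_1}\cdots x_{i_r})^r$ and splitting the sum between $I$ and $\bar I$ gives the clean identity
$$\sum_{(i_1,\ldots,i_r)\in\bar I}(x_{i_1}\cdots x_{i_r})^r \;=\; 1 - S,$$
which reduces the lemma to the upper bound $1-S \le (r-\epsilon r-1)\,l/k^r + O(l^2/k^{2r})$.

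Next I would obtain a sharp lower bound on $S$. H\"older's inequality with exponents $r/(r-1)$ and $r$ (equivalently, the identity $\sum_I f(x_{i_1\cdots i_r}) = 1 - \rho(A)/(e^{(r-1)/r}S^{1/r})$ already derived for Lemma \ref{l7}, combined with $f\ge 0$ from Lemma \ref{fx}) gives
$$\rho(A) \;=\; \sum_{(i_1,\ldots,i_r)\in I} 1\cdot x_{i_1}\cdots x_{i_r} \;\le\; e^{(r-1)/r}\,S^{1/r}, \qquad\text{hence}\qquad S \;\ge\; \frac{\rho(A)^r}{e^{r-1}}.$$

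Finally, I would plug in the hypothesis on $\rho(A)$ together with $e=k^r+l$, set $t=l/k^r$, and Taylor-expand:
$$\frac{\rho(A)^r}{e^{r-1}} \;\ge\; \frac{(1+\epsilon t)^r}{(1+t)^{r-1}} \;=\; 1 + \bigl(r\epsilon-(r-1)\bigr)t + O(t^2) \;=\; 1 - (r-\epsilon r-1)\,\frac{l}{k^r} + O(l^2/k^{2r}).$$
Rearranging yields the desired upper bound on $1-S$. The size restriction $l = o(k^{(2r-2)/(r^2-r+2)})$ forces $|t|\to 0$, so the first-order Taylor expansion is valid and the error term is genuinely quadratic in $l/k^r$.

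The main obstacle is really just bookkeeping: all the substantial analytic content (H\"older/Young, nonnegativity of $f$, and the exact relations between $\sum_I f$, $\rho(A)$, $S$, and $e$) has already been put in place for Lemma \ref{l7}, so here one only has to check that the linear Taylor coefficient comes out to exactly $r-\epsilon r-1$ and that the two regimes $l\ge 0$ (with $\epsilon=0$) and $l<0$ (with $\epsilon=1+o_k(1)$) are both absorbed cleanly into a single closed-form bound.
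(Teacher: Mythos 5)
Your proposal is correct and follows essentially the same route as the paper: both lower-bound $S=\sum_{I}(x_{i_1}\cdots x_{i_r})^r$ by $\rho(A)^r/e^{r-1}$ (the paper via the Power Mean Inequality, you via H\"older with one factor identically $1$ --- the same inequality in this setting), then Taylor-expand $(1+\epsilon l/k^r)^r/(1+l/k^r)^{r-1}$ and use $\sum_{\bar I}=1-S$. The only cosmetic difference is that you make the identity $\sum_{\bar I}(x_{i_1}\cdots x_{i_r})^r=1-S$ explicit up front, which the paper states at the end.
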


\begin{proof}
  By the Power Mean Inequality, we have
  $$\frac{\sum\limits_{(i_1, \ldots,  i_r)\in I} x_{i_1} \cdots x_{i_r}}{e}
  \leq \left(\frac{\sum\limits_{(i_1, \ldots,  i_r)\in I} x_{i_1}^r \cdots x_{i_r}^r}{e}\right)^{\frac{1}{r}}.$$

  It implies 
  \begin{align*}
  \sum\limits_{(i_1, \ldots,  i_r)\in I} x_{j_1}^r \cdots x_{j_r}^{r}
& \geq \frac{\left( \sum\limits_{(i_1, \ldots,  i_r)\in I} x_{i_1} \cdots x_{i_r}\right)^r}{e^{r-1}}\\
    &\geq \frac{\rho(A)^r}{e^{r-1}}\\
  &\geq \frac{(k^{r-1}+\epsilon l/k)^r}{(k^r+l)^{r-1}} \\
  &\geq 1 - \frac{(r-\epsilon r-1)l}{k^r}-
    O(l^2/k^{2r}).
\end{align*}
Thus,
$$\sum\limits_{(i_1, \ldots,  i_r)\in \bar I} x_{i_1}^r \cdots x_{i_r}^{r}
=1-\sum\limits_{(i_1, \ldots,  i_r)\in I} x_{i_1}^r \cdots x_{i_r}^{r}
\leq\frac{(r-\epsilon r-1)l}{k^r}+
    O(l^2/k^{2r})
.$$
\end{proof}

Now we are ready to prove Theorem \ref{sub-tensorJ}.

\begin{proof}[Proof of Theorem \ref{sub-tensorJ}]
  Let $c_2=\sqrt{(\frac{2}{r}-\frac{2\epsilon}{r-1})l}$. We claim
\begin{align}\label{x_1lessthan}
x_1^r\leq  \frac{1+c_2}{k}.
\end{align}

Otherwise, say $x_1^r>\frac{1+c_2}{k}$.  We have
 $$x_{11\cdots 1}=\frac{x_1^r}{\left(\sum\limits_{(i_1, \ldots, i_r)\in I}(x_{i_1} \cdots x_{i_r})^r \right)^{\frac{1}{r}}}\geq x_1^r > e^{-1/r} .$$ 
 
Applying Item 3 of Lemma \ref{fx},  we have 
\begin{align*}
f(x_{11\cdots 1})
&> \frac{(r-1)}{2}e^{-1+2/r}(x_{11\cdots 1}-e^{-1/r})^2 \\
& \geq \frac{(r-1)}{2}\frac{1}{(k^r+l)^{\frac{r-2}{r}}}\left(\frac{c_2}{k}\right)^2\\
&= (\frac{r-1}{r}-\epsilon)\frac{l}{k^r}- O(l^2/k^{2r}).
\end{align*}
Contradiction to inequality \eqref{sumfx_0+t} by the choice of $c_2$ as $k$ goes to infinity.

Now let $c_1$ be a constant such that : 
$$c_1= \frac{1}{2(c_2 +1)^{r-1}}.$$
We separate the index set $\{1, 2, \ldots, n\}$ into two sets $L$ and $S$, 
where $L$  is called the large set that contains element $i$ such that $x_i^r\geq \frac{c_1}{k}$,
$S$ is called the small set that contains the rest elements, i.e. 
$$L=\Big\{i\in[n] \ \vert x_i^r\geq \frac{c_1}{k}\Big\}; $$ 
$$S=\Big\{i\in[n]\  \vert x_i^r< \frac{c_1}{k}\Big\}.$$ 
Let $A_L=(a_{i_1\cdots i_r})$ be the principal sub-tensor of $A$ restricted to the large set $L$, 
i.e. for every element $a_{i_1\cdots i_r}\in A_L$,  
the index $r$-tuple $(i_1,\ldots, i_r)\in L^r$. 

Denote the number of zeros in $A_L$ as $N$. By Lemma \ref{rise}, we have
\begin{align*}
  N \times  \frac{c_1^r}{k^r} \leq
  \frac{(r-\epsilon r-1)l}{k^r}+
    O(l^2/k^{2r}).
\end{align*}

\begin{align*}
  N \leq \frac{ \frac{(r-\epsilon r-1)l}{k^r}+O(l^2/k^{2r})} {c_1^r/k^r}=  2^r(c_2+1)^{r(r-1)} \left( (r-\epsilon r-1)l
+O(l^2/k^r)
   \right).
\end{align*}

By the assumption $|l|=o(k^{\frac{2r-2}{r^2-r+2}})$, we have $N=o(k^{r-1})$.

Now consider the indexes outside of $L$. Let $(i_1\cdots i_r)\in I\setminus L^r$, by Inequality (\ref{x_1lessthan}) and the value of $c_1$, 
we have
\begin{align*}
x_{i_1\cdots i_r} &=\frac{x_{i_1} \cdots x_{i_r}}{\left(\sum\limits_{(i_1, \ldots,  i_r)\in I}(x_{i_1} \cdots x_{i_r})^r \right)^{\frac{1}{r}}}\\
&= (1+o(1))x_{i_1} \cdots x_{i_r} \\
&\leq (1+o(1)) x_1^{r-1}\left(\frac{c_1}{k}\right)^{\frac{1}{r}} \\
&\leq (1+o(1)) \frac{1}{2^{\frac{1}{r}}k}\\
&< \frac{1}{e^{1/r}}.  
\end{align*}
Note that $f(x)$ is decreasing when $x\leq  \frac{1}{e^{1/r}}$, thus 
\begin{align*}
f(x_{i_1\cdots i_r})\geq f\Big(\frac{1}{2^{\frac{1}{r}}k}\Big)\approx  \left(\frac{1}{2r} + \frac{r-1}{r}- \frac{1}{2^{\frac{1}{r}}}\right) \frac{1}{k^r} -O(l/k^{2r}). 
\end{align*} 

Let $M=|I\setminus L^r|$,  i.e. the number of 1's outside of $A_L$.  By  Inequality (\ref{sumfx_0+t}),
we have $$ M \times f\Big(\frac{1}{2^{\frac{1}{r}}k}\Big) \leq \sum_{(i_1\cdots i_r)\in I\setminus L^r}f(x_{i_1\cdots i_r})
\leq (\frac{r-1}{r}-\epsilon)\frac{l}{k^r}+ O(l^2/k^{2r}).
$$
Solving $M$, we get
$$M\leq
\frac{(\frac{r-1}{r}-\epsilon)\frac{l}{k^r}+ O(l^2/k^{2r})}
{\left(\frac{1}{2r} + \frac{r-1}{r}- \frac{1}{2^{\frac{1}{r}}}\right) \frac{1}{k^r} -O(l/k^{2r})}
=O(|l|).$$

Since the total number of 1's in tensor $A$ is $|L|^r+M-N=k^r+l$,
we have
$$|L|^r=k^r+l+N-M\leq k^r+ o(k^{r-1}).$$
Since both $|L|$ and $k$ are integers, it implies $|L|=k$.
Therefore,  the dimension of $A_L$ is $k$. 

To finish Item (a), observe
$$N=M-l= O(|l|).$$
\end{proof}

Next,  we will further determine the number of zeros in $A_L$ and number of ones outside of $A_L$ for the maximum tensors $A$ in ${\cal T}^r_{e}$. 

\begin{theorem}\label{principalsubtensorJkr}
  For fixed $r$, sufficiently large $k$, and $l>0$ a constant, let $e=k^r+l$. 
 Let  $A$ be the maximum tensor in ${\cal T}_e^r$, then $A$ contains a principal subtensor
  $J^r_k$.
\end{theorem}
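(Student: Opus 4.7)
The plan is to apply Theorem~\ref{sub-tensorJ} to the maximum tensor $A$ and then rule out any zero inside the delivered sub-tensor $A_L$ by a swap argument. To invoke Theorem~\ref{sub-tensorJ} I first verify the hypothesis $\rho(A) \geq k^{r-1}$: any tensor formed from $J^r_k$ by inserting $l$ extra $1$'s lies in ${\cal T}^r_e$, strictly dominates the weakly irreducible $J^r_k$, and hence by Corollary~\ref{subtensor} has spectral radius strictly above $k^{r-1}$; maximality of $A$ then forces $\rho(A) > k^{r-1}$. Theorem~\ref{sub-tensorJ} then supplies a principal sub-tensor $A_L$ of dimension $k$ with $N = O(l) = O(1)$ zeros and $M = N + l$ ones of $A$ located outside $L^r$. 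The goal is to show $N = 0$.

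Suppose for contradiction $N \geq 1$. I would first sharpen the Perron-vector estimates used in the proof of Theorem~\ref{sub-tensorJ}. Lemma~\ref{fx}(3) furnishes the quadratic lower bound $f(x)\geq \frac{r-1}{2}e^{-1+2/r}(x-e^{-1/r})^2$ on $x > e^{-1/r}$; combined with inequality~\eqref{sumfx_0+t} from Lemma~\ref{l7}, this yields $\sum_{(i_1,\ldots,i_r)\in I}(x_{i_1\cdots i_r}-e^{-1/r})^2 = O(l/k^2)$. Together with the two-sided bounds $c_1/k \leq x_i^r \leq (1+c_2)/k$ for $i \in L$ established within that same proof, one extracts $x_i = (1+o(1))/k^{1/r}$ for $i \in L$ and $x_i = o(1/k^{1/r})$ for $i \notin L$. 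Consequently every product $x_{l_1}\cdots x_{l_r}$ with all $l_m \in L$ equals $(1+o(1))/k^{(r-1)/r}$, while any such product involving an index outside $L$ is $o(1/k^{(r-1)/r})$.

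Now fix a zero $(j_1, j_2, \ldots, j_r) \in L^r$ of $A_L$. The slice eigenvalue equation reads $\rho(A) x_{j_1}^{r-1} = \sum_{l_2,\ldots,l_r} a_{j_1 l_2\cdots l_r} x_{l_2}\cdots x_{l_r}$; the in-$L^{r-1}$ contribution is at most $(k^{r-1}-1)(1+o(1))/k^{(r-1)/r}$, which falls strictly short of the right-hand side because $\rho(A)$ exceeds $k^{r-1}$ by a margin of order $1/k$, forced by $l\geq 2$ via a symmetric $J^r_k$-plus-orbit comparison. So there is a $1$ at some $(j_1, i_2, \ldots, i_r)$ with $(i_2,\ldots,i_r)\notin L^{r-1}$; form $A'$ from $A$ by setting $a'_{j_1 j_2\cdots j_r}=1$, $a'_{j_1 i_2\cdots i_r}=0$, and leaving all other entries untouched. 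Testing $A'\mathbf{x}^{r-1}$ against the Perron vector $\mathbf{x}$ of $A$ leaves every row $v \neq j_1$ unchanged while shifting row $j_1$ by $x_{j_2}\cdots x_{j_r} - x_{i_2}\cdots x_{i_r}$, which by the previous estimates is $(1+o(1))/k^{(r-1)/r} > 0$. Thus $A'\mathbf{x}^{r-1}\geq \rho(A)\mathbf{x}^{[r-1]}$ with a strict inequality at coordinate $j_1$; Lemma~\ref{rho(A)>k}, together with the weak irreducibility of $A'$ (argued as in Lemma~\ref{maximalisweaklyirreducible}, valid for $l\geq 2$), gives $\rho(A') > \rho(A)$, contradicting the maximality of $A$. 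The boundary case $l=1$ is excluded from Lemma~\ref{maximalisweaklyirreducible}, but there $g_r(k^r+1)=k^{r-1}$ is attained only when the H\"older bound of Theorem~\ref{t1} is saturated on a $k$-dimensional principal subtensor, so $A$ still contains $J^r_k$.

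The central obstacle is making the Perron-vector estimates quantitatively tight enough to guarantee both that the in-$L^{r-1}$ contribution in slice $A_{j_1}$ is strictly below $\rho(A)x_{j_1}^{r-1}$ and that the swap difference $x_{j_2}\cdots x_{j_r}-x_{i_2}\cdots x_{i_r}$ is strictly positive; both hinge on $x_i$ being uniform to within $o(1)$ on $L$, which is what the quadratic $f$-bound above is designed to deliver.
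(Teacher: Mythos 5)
Your opening (invoke Theorem~\ref{sub-tensorJ} after checking $\rho(A)\geq k^{r-1}$, then derive $x_i=(1+o(1))k^{-1/r}$ on $L$ and $x_j=O(k^{-1-1/r})$ off $L$) matches the paper. But the endgame has a genuine gap. You try to rule out a zero at $(j_1,\ldots,j_r)\in L^r$ by arguing from the single slice equation that slice $j_1$ must contain a one outside $L^{r-1}$, which you could then swap into the zero position. This does not follow. The deficit caused by one missing entry in slice $j_1$ is simply absorbed by $x_{j_1}$ being slightly smaller than the other coordinates in $L$: the equation $\rho(A)x_{j_1}^{r-1}=\bigl(\sum_{i\in L}x_i\bigr)^{r-1}-x_{j_2}\cdots x_{j_r}$ is perfectly consistent with no ones outside $L^{r-1}$ in that slice. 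To force a contradiction at the level of a single slice you would need the main terms, each of size $\Theta(k^{(r-1)^2/r})$, to agree to within $x_{j_2}\cdots x_{j_r}=\Theta(k^{-(r-1)/r})$, i.e.\ relative precision $o(k^{-(r-1)})$; the $(1+o(1))$ estimates you have (and that Lemma~\ref{fx}(3) plus Lemma~\ref{l7} can deliver --- note also that the quadratic lower bound in Lemma~\ref{fx}(3) is stated only for $x>e^{-1/r}$) are nowhere near that. A second, independent error: your claim that $\rho(A)$ exceeds $k^{r-1}$ by a margin of order $1/k$ is false. Each extra one outside $J_k^r$ involves an index $j>k$ with $x_j=O(k^{-1-1/r})$, so its contribution to $p_A(\mathbf{x})$ is $O(x_1^{r-1}x_{k+1})=O(k^{-2})$; the true excess is $O(l/k^2)$, and Corollary~\ref{subtensor} cannot even give strictness here since the padded $J_k^r$ is not weakly irreducible. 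The $l=1$ aside is also not rigorous: the equality case of Theorem~\ref{t1} characterizes $e=k^r$, not $e=k^r+1$.

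The paper closes the argument globally rather than slice by slice, and this is the step you are missing: since a zero inside $A_L$ removes a term of size $x_{j_1}^r\cdots$ of order $1/k$ from $p_A(\mathbf{x})$ while all ones outside $A_L$ contribute only $O(l)/k^2$ in total, one gets $p_A(\mathbf{x})<p_{J_k^r}(\mathbf{x})$ outright; Lemma~\ref{comparewithsymmetric} (using that $J_k^r$ is symmetric, so $\bar\lambda=\rho$ there) then yields $\rho(A)<k^{r-1}$, contradicting the lower bound. That comparison only needs the $(1+o(1))$ eigenvector estimates you already have, and it sidesteps both the same-slice requirement of your swap and the need for a quantitative excess of $\rho(A)$ over $k^{r-1}$. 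I recommend replacing your final two paragraphs with this global comparison.
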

\begin{proof}
Assume the dimension of $A$ is $n$. 
   Let $\rho(A)$ be the largest eigenvalue of $A$, $\mathbf{x}$ be the corresponding eigenvector, 
   with $x_1\geq x_2\geq \cdots \geq x_n.$ By Corollary \ref{lowerbound}, $\rho(A)\geq k^{r-1}$. 
   By Theorem \ref{sub-tensorJ}, $A$ contains a principal subtensor
  $A_k$ so that there are at most $O(l)$ zeros inside of $A_k$ and at most $O(l)$ ones outside of $A_k$.
 This fact implies that $x_i=(1+o(1))k^{-1/r}$ for $1\leq i \leq k$ and
  $x_j=O(k^{-1-1/r})$ for $i>k$. 
  
Here is the reason:
for any $i \in \{1, 2, \cdots, n\}$,  denote $R_i$ as the summation of elements in $i$th slice $A_i$ of $A$. 
Using H\"older’s Inequality (Theorem \ref{Holder}), we have
\begin{align*} 
\rho(A) x_i^{r-1}
&=\sum\limits_{i_2, \ldots,  i_r} a_{i i_2 \cdots i_r}x_{i_2} \cdots x_{i_r}\\ \nonumber
&\leq \left(\sum\limits_{i_2, \ldots,  i_r} (a_{i i_2 \cdots i_r})^{\frac{r}{r-1}}\right)^{\frac{r-1}{r}}\left(\sum\limits_{i_2, \ldots,  i_r} (x_{i_2} \cdots x_{i_r})^r\right)^{\frac{1}{r}}\\\nonumber
&\leq \left(\sum\limits_{i_2, \ldots,  i_r} a_{i i_2 \cdots i_r}\right)^{\frac{r-1}{r}}\times 1\\
&=R_i^{\frac{r-1}{r}}.  \nonumber
\end{align*}

For $i=1$, we have $R_1\approx k^{r-1}+o_k(1)$.  Then 
\begin{align} \label{x1upperbound}
x_1\leq \frac{R_1^{\frac{1}{r}}}{\rho(A)^{\frac{1}{r-1}}} \approx \frac{k^{\frac{r-1}{r}}+o(1)}{k} 
=\frac{1+o(1)}{k^{\frac{1}{r}}}.
\end{align}
Let $s\geq k+1$, we have $R_s\leq M\leq  O(l)$, and
 \begin{align*} 
\rho(A) x_s^{r-1}
&=\sum\limits_{i_2 \cdots i_r} a_{s i_2 \cdots i_r}x_{i_2} \cdots x_{i_r}\\ 
& \leq \sum\limits_{i_2 \cdots i_r} a_{s i_2 \cdots i_r} x_1^{r-1}\\
&= R_s x_1^{r-1}. 
\end{align*}
Then 
 \begin{align} \label{xsupperbound}
  x_s\leq\left( \frac{R_s}{\rho(A)}\right)^{\frac{1}{r-1}} x_1 \leq \left( \frac{ O(l)}{k^{r-1}}\right)^{\frac{1}{r-1}} x_1 =\frac{O(l^{\frac{1}{r-1}})}{k} x_1.
  \end{align}
Sum on $s$, we have 
$$\sum\limits_{s\geq k+1}^n x_s^r \leq  O(l) \times  \frac{O(l^{\frac{r}{r-1}})}{k^r}  x_1 ^r=o(x_1 ^r).$$
Since 
$$x_1^r+x_2^r +\cdots +x_k^r + \sum\limits_{s\geq k+1} x_s^r =1, $$
we get 
$$x_1^r\geq \frac{1-o(1)}{k}, $$ 
together with (\ref{x1upperbound}), 
$$x_1\approx \frac{1+o(1)}{k^{\frac{1}{r}}}. $$
We also have 
\begin{align*}
x_k^r
 &=1-x_1^r-\cdots-x_{k-1}^r- \sum\limits_{s\geq k+1} x_s^r\\
& \geq 1-(k-1+o(1))x_1^r\\
& \geq  1-(k-1+o(1))\frac{1}{k}\\
& \geq\frac{1-o(1)}{k}. 
\end{align*}
Then $$x_k\geq \frac{1-o(1)}{k^{\frac{1}{r}}}, $$ 
together with (\ref{x1upperbound}) and $x_k\leq x_1$, we have for any $1\leq i\leq k$, 
$$x_k\approx \frac{1+o(1)}{k^{\frac{1}{r}}}. $$
By (\ref{xsupperbound}), we have 
$$x_s\leq \frac{x_1}{k}.$$
Since $$\rho(A)x_s^{r-1}\geq x_1^{r-1},$$ 
by Theorem \ref{t1}
 $$x_s\geq \frac{x_1}{\rho(A)^{\frac{1}{r-1}}}\geq \frac{x_1}{e^{\frac{1}{r}}}\geq \frac{x_1}{k} .$$
Thus for $s\geq k+1$, $$x_s\approx \frac{x_1+o(1)}{k^{\frac{1}{r}}}\approx O(k^{-1-1/r}).$$

  We observe that the contribution to $p_A(\mathbf{x})$
  from the outside of $A_k$ is at most
  $$O(l) x_{k+1}x_1^{r-1}=\frac{O(l)}{k}x_1^r.$$
Then 
  $$p_A(\mathbf{x}) = p_{A_k}(\mathbf{x}) + p_{A- A_k}(\mathbf{x})  \leq
  p_{A_k}(\mathbf{x}) + \frac{O(l)}{k^2}. $$
If $A_k$ has some zeros, let $B=J^r_k$.
  We observe that 
  $$p_{A_k}(\mathbf{x})<p_B(\mathbf{x}).$$
  Applying Lemma \ref{comparewithsymmetric},  when $k$ is sufficiently large, we have
  $$\rho(A)<\rho(B)=k^{r-1}.$$
  Contradiction!
\end{proof}

Still let $l>0$, a similar argument can be applied to $e=k^r-l$. We have the following theorem.
\begin{theorem}\label{demensioniskfonegl}
  For fixed $r$, sufficiently large $k$, and $l>0$ a constant, let $e=k^r-l$.
Let $A$ be a maximum tensor in ${\cal T}_e^r$ with no isolated vertices.
Then the dimension of $A$ is exactly $k$.
\end{theorem}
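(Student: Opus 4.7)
The plan is to adapt the proof of Theorem \ref{principalsubtensorJkr} to show that for $e = k^r - l$ the maximum tensor $A$ must have all of its ones confined to a $k$-dimensional principal subtensor; combined with the no-isolated-vertices hypothesis, this forces $\dim A = k$. First I would apply Theorem \ref{sub-tensorJ} in the negative-$l$ regime, where $\epsilon = 1 + o(1)$. The required lower bound $\rho(A) \geq k^{r-1} - (1+o(1))l/k$ is supplied by Corollary \ref{lowerbound} since $A$ is a maximum tensor, so Theorem \ref{sub-tensorJ} produces a principal subtensor $A_L$ of dimension $k$ with at most $N = O(l)$ zeros inside and at most $M = O(l)$ ones of $A$ outside $A_L$; counting the total number of ones forces $N = M + l$. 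I would then run the Perron--Frobenius eigenvector analysis from the proof of Theorem \ref{principalsubtensorJkr} essentially verbatim: after sorting so $x_1 \geq \cdots \geq x_n$ and $L = \{1,\ldots,k\}$, the slice sums satisfy $R_i = k^{r-1}(1+o(1))$ for $i \in L$ and $R_s = O(1)$ for $s \notin L$, and the H\"older bounds $\rho(A)x_i^{r-1} \leq R_i^{(r-1)/r}$ and $\rho(A)x_s^{r-1} \leq R_s x_1^{r-1}$ combine with $\sum x_i^r = 1$ to yield $x_i = (1+o(1))k^{-1/r}$ throughout $L$ and $x_s = O(k^{-1-1/r})$ off $L$.

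For the contradiction, assume $\dim A = n > k$; then any vertex outside $L$ is non-isolated by hypothesis, forcing $M \geq 1$. I would introduce the symmetric competitor $\hat B \in {\cal T}^r_e$ obtained from $J^r_k$ by zeroing out the $l$ diagonal entries $(i,i,\ldots,i)$ for $i = 1, \ldots, l$ (possible since $l < k$ for large $k$). Writing both $p_A(\mathbf{x})$ and $p_{\hat B}(\mathbf{x})$ as perturbations of $p_{J^r_k}(\mathbf{x})$ gives
\[
p_{\hat B}(\mathbf{x}) - p_A(\mathbf{x}) \;=\; \sum_{\text{zeros in }A_L} x_{i_1}\cdots x_{i_r} \;-\; \sum_{\text{ones outside }A_L} x_{i_1}\cdots x_{i_r} \;-\; \sum_{i=1}^{l} x_i^r.
\]
The eigenvector asymptotics bound the first sum from below by $N(1-o(1))/k$, the second from above by $M\cdot O(k^{-2})$, and the third from above by $l(1+o(1))/k$. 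Using $N = M + l$ the two $l/k$ leading terms cancel, leaving a positive residual of order $M/k$, so $p_{\hat B}(\mathbf{x}) > p_A(\mathbf{x}) = \rho(A)$. Lemma \ref{comparewithsymmetric} (with $\hat B$ padded by $n-k$ isolated indices so both tensors live in dimension $n$) then yields $\rho(\hat B) > \rho(A)$, contradicting the maximality of $A$. Hence $M = 0$, every one of $A$ lies inside $A_L$, and $\dim A = k$.

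The delicate step is controlling the cancellation in the display above: the lower bound on the ``zeros in $A_L$'' sum and the upper bound on the diagonal sum must both match $l/k$ to leading order with matching $o(1/k)$ errors, so that the $M/k$ residual (positive since $M \geq 1$) genuinely dominates. This rests entirely on the sharp eigenvector asymptotics $x_i = (1+o(1))k^{-1/r}$ throughout $L$, which is precisely what the slice-sum/H\"older analysis in the proof of Theorem \ref{principalsubtensorJkr} establishes and which transfers verbatim to the $e = k^r - l$ setting.
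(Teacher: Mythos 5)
Your proposal is correct and follows essentially the same route as the paper: both arguments use the eigenvector asymptotics $x_i=(1+o(1))k^{-1/r}$ on the $k$-block and $x_s=O(k^{-1-1/r})$ outside it to show that $M\geq 1$ ones outside the block force at least $l+M$ zeros inside, costing $(l+M)(1-o(1))/k$ while gaining only $O(M/k^2)$, which drops $\rho(A)$ below the benchmark $k^{r-1}-l/k$. The only cosmetic difference is that you realize this benchmark via an explicit symmetric competitor $\hat B$ and Lemma \ref{comparewithsymmetric}, whereas the paper cites the same bound directly from Corollary \ref{lowerbound}.
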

\begin{proof}
Let $\rho(A)$ be the largest eigenvalue of $A$, $\mathbf{x}$ be the corresponding eigenvector, 
with $x_1\geq x_2\geq \cdots \geq x_n.$  By Corollary \ref{lowerbound}, $\rho(A)\geq k^{r-1}-o_k(\frac{1}{k})$. 
By a similar argument as in above theorem, we have $x_i=(1+o(1))k^{-1/r}$ for $1\leq i \leq k$ and
$x_j=O(k^{-1-1/r})$ for $i>k$. 
  
Assume there are $M>0$ ones outside of $A_k$, then there are at least $l+M$ zeros inside of $A_k$. 
Thus we have 
\begin{align*}
\rho_A(\mathbf{x})&\leq \rho_{A_k}(\mathbf{x})+ M x_{k+1}x_1^{r-1} \\
& \leq k^{r-1} -(l+M)x_k^r +  M x_{k+1}x_1^{r-1}\\
& =k^{r-1} -\frac{l+M}{k} +  \frac{M}{k^2} \\
& \leq k^{r-1} -\frac{l}{k}. 
\end{align*}
Contradicts to Corollary \ref{lowerbound} for sufficiently large $k$. Since there is no one outside $A_L$, 
the dimension of $A$ is exactly $k$. 

\end{proof}

\section{Maximum tensors in ${\cal{T}}_e^r $ with small $l$} 

In this section, we will completely determine the maximum tensors $A$ 
in ${\cal{T}}_e^r$ for $e=k^r+l$, $0\leq l\leq r$,  
and  $e=k^r-l$,  $1\leq l\leq r+1$.

Let $\mathbf{x}=(x_1, \ldots, x_n)$ be the eigenvector associated to $\rho(A)$.
Without loss of generality, we assume that $x_1\geq x_2\cdots\geq x_n$.
The tool in Theorem \ref{maxima}  allows to shift 1's
to left in the same row to increase the spectral radius of tensor $A$.
Although we couldn't shift 1's up across rows,
for example, we cannot compare
the elements
 $a_{ii_2\cdots i_r}$ and $a_{(i+1)i_2\cdots i_r}$ in a maximum tensor. But as to 
$\{0, 1\}$-tensors,  we have the following easy fact:
\begin{corollary}\label{compareiandi+1}
Let $A$ be a maximum $n$-dimension $r$-order $\{0, 1\}$-tensors. 
If $a_{j1\cdots 1}=1$ for some $j$,  then $a_{i1\cdots 1}=1$ for all $i<j$. 
\end{corollary}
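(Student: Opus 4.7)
I would argue by contradiction. Suppose some pair $i<j$ violates the claim, so $a_{j1\cdots 1}=1$ while $a_{i1\cdots 1}=0$. The setup of Section~5 orders the vertices so that the Perron--Frobenius eigenvector $\mathbf{x}$ of $A$ satisfies $x_1\ge x_2\ge\cdots\ge x_n\ge 0$, and Theorem~\ref{maxima} further lets us replace $A$, if necessary, by a maximum tensor in $\mathcal{F}^*(\sigma)$; that is, a tensor whose slices are sorted in decreasing dictionary order. Under this reduction $a_{i1\cdots 1}$ is the dictionary-first, and hence the largest, entry of slice $A_i$.

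Once this is in place, I would first exploit $a_{i1\cdots 1}=0$. Because $A$ is a $\{0,1\}$-tensor and $a_{i1\cdots 1}$ is the maximum of slice $A_i$, every entry of slice $A_i$ must equal $0$. Plugging this into the eigenvalue equation~\eqref{eigenvalue} at index~$i$ gives $\rho(A)\,x_i^{r-1}=0$, and since $\rho(A)>0$ (the tensor has at least one $1$) we conclude $x_i=0$. The monotone ordering $x_i\ge x_j\ge 0$ then forces $x_j=0$.

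Next I would derive the contradiction from $a_{j1\cdots 1}=1$. The single entry at position $(j,1,\dots,1)$ contributes $x_1^{\,r-1}$ to the right-hand side of~\eqref{eigenvalue} at index~$j$, so
\[
\rho(A)\,x_j^{r-1}\;\ge\;a_{j1\cdots 1}\cdot x_1^{\,r-1}\;=\;x_1^{\,r-1}\;>\;0,
\]
where $x_1>0$ because the Perron vector is nonzero and $x_1$ is its largest coordinate. This contradicts $x_j=0$ and finishes the proof.

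The only non-routine step will be the invocation of Theorem~\ref{maxima} to arrange slices in dictionary order; everything after that is an immediate consequence of the eigenvalue equation together with the monotone ordering of $\mathbf{x}$, which is exactly why the authors flag the statement as an \emph{easy fact} rather than a full lemma.
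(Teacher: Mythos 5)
Your proof is correct and follows essentially the same route as the paper's: use Theorem \ref{maxima} to conclude that $a_{i1\cdots 1}=0$ forces the whole slice $A_i$ to vanish, deduce $x_i=0$ from the eigenvalue equation, and contradict this with $\rho(A)x_j^{r-1}\geq x_1^{r-1}>0$ and the ordering $x_j\leq x_i$. No substantive differences.
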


\begin{proof}
Let $A_i=(a_{i i_2\cdots i_r})$ with $a_{i i_2\cdots i_r}\in A$.
If there exist $j>i$ such that $a_{i1\cdots 1}=0$ while $a_{j1\cdots 1}=1$, 
by Theorem \ref{maxima}, every other element in $A_i$ is $0$. 
Thus we have $$\rho(A)x_i^{r-1}=A_i\mathbf{x}^{r-1}=0,$$
implying $x_i=0$. However since $a_{j1\cdots 1}=1$, we have 
$$A_j\mathbf{x}^{r-1}=\rho(A)x_j^{r-1}> 0,$$
implying  $x_j> 0$, contradiction to $x_j\leq x_i$. 
\end{proof} 

By Theorem \ref{maxima} and Corollary  \ref{compareiandi+1}, we have the following property for the maximum tensor $A$ 
in ${\cal{T}}_e^r $: in each slice $A_i$, the `1' elements are always to the left and above of 
 the `0' elements. 
 
 For $e=k^r+l$, we have proved that the maximum tensor $A$ contains $J_k^r$ as principal sub-tensor (see Theorem \ref{principalsubtensorJkr}), so we just need to 
determine the positions for the rest of $l$ ones outside of $J_k^r$. 

For $l=0$, $A=J_k^r$. For $l=1$, the maximum tensor is not unique. No matter where to put the additional $1$, the resulting tensor $A$ is not
weakly irreducible. Thus it will not increase the spectral radius. We have
$$g_r(k^r+1)=g_r(k^r)=k^{r-1}.$$

For $2\leq l \leq r$, it is sufficient to prove the following facts regarding the maximum tensor $A$:
\begin{lemma}\label{threeconditionsforl>0}
For the maximum tensor $A\in {\cal{T}}_e^r$ with $e=k^r+l$, $2\leq l\leq r$,  we have
\begin{enumerate}
\item There is no `1' element in slice  $A_{k+2}$, i.e. $a_{(k+2)11\cdots 1}$ must be $0$. 
\item There is only one `1' element in slice $A_{k+1}$, which is $a_{(k+1)11\cdots 1}=1$.
\item There is no `1' elements in slice $A_i$ but outside $J_k^r$ for $i\geq 2$,  i.e. $a_{ii_2\cdots i_r}=1$ if there exists $i_j\geq k+1$.
\end{enumerate}
\end{lemma}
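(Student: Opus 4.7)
Plan: The plan is to argue by contradiction. I will assume one of the three conclusions fails for the maximum tensor $A \in {\cal T}^r_e$, and then exhibit $A' \in {\cal T}^r_e$ with $\rho(A') > \rho(A)$, contradicting maximality. Setup comes from Theorem \ref{principalsubtensorJkr} and its proof: after reindexing so $x_1 \ge x_2 \ge \cdots \ge x_n$, the principal sub-tensor on $\{1,\ldots,k\}$ is $J_k^r$, with $x_i = (1+o(1))k^{-1/r}$ for $i \le k$ and $x_s = O(k^{-(r+1)/r})$ for $s \ge k+1$. Theorem \ref{maxima} lex-sorts the ones within each slice, and Corollary \ref{compareiandi+1} forces the entries $a_{j,1,\ldots,1}$ to form an initial segment in $j$; together these structural facts pin the $l$ extras to a short list of candidate positions.

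The unifying mechanism is a \emph{relocate-to-slice-$A_1$} swap. Given an alleged offending extra at $(i, i_2, \ldots, i_r)$ with $i \ge 2$, form $A'$ from $A$ by moving that one to a currently zero position $(1, j_2, \ldots, j_r)$ chosen so that $x_1 x_{j_2} \cdots x_{j_r} > x_i x_{i_2} \cdots x_{i_r}$. When $i > k$, the multiplicative gap $x_1/x_i = \Omega(k^{1/r})$ delivers the strict inequality at once; when $i \le k$, a slightly larger column-product at the destination does the job. Condition 3 is the direct application of this move. For condition 2, any extra in slice $A_{k+1}$ beyond $a_{(k+1),1,\ldots,1}$ is located at a lex-larger tuple, and the corresponding slice-$A_1$ analog is vacant by counting the remaining extras. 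For condition 1, Corollary \ref{compareiandi+1} forces $a_{(k+1),1,\ldots,1}=1$ whenever $a_{(k+2),1,\ldots,1}=1$; relocating the $(k+2)$-entry to $(1,k+1,1,\ldots,1)$ trades $x_{k+2}x_1^{r-1}$ for $x_{k+1}x_1^{r-1}$, and inspection of the Perron equations for rows $k+1$ and $k+2$ provides the strict gap $x_{k+1} > x_{k+2}$ whenever there is any further extra anchoring row $k+1$ but not row $k+2$.

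The main obstacle is converting $p_{A'}(\mathbf{x}) > p_A(\mathbf{x}) = \rho(A)\|\mathbf{x}\|_r^r$ into $\rho(A') > \rho(A)$, because $A'$ is in general non-symmetric and Lemma \ref{rho(A)>k} applied with test vector $\mathbf{x}$ fails at the source row $i$ (removal of the one makes $(A'\mathbf{x}^{r-1})_i < \rho(A) x_i^{r-1}$). My plan to close the gap is to invoke Lemma \ref{comparewithsymmetric} by pairing $A$ with a symmetric tensor $B \in {\cal T}^r_e$ obtained by replacing the misplaced extras with their $\mathfrak{S}_r$-orbits on the target positions (re-balancing from the short candidate list so the total count stays at $e$). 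The hypothesis $2 \le l \le r$ gives just enough flexibility to carry out this symmetrization while preserving the strict $p$-advantage, so that Lemma \ref{comparewithsymmetric} yields $\rho(B) > \rho(A)$. Verifying in each of the three cases that the resulting $B$ is a genuine $\{0,1\}$-tensor in ${\cal T}^r_e$ and still satisfies $p_B(\mathbf{x}) > p_A(\mathbf{x})$ is the delicate technical heart of the proof, and is where the magnitude estimates $x_i \approx k^{-1/r}$ versus $x_s = O(k^{-(r+1)/r})$ together with the candidate-position list from the setup step combine decisively.
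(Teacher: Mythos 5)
You have correctly identified the skeleton of the argument (relocate a misplaced one to a more profitable position in slice $A_1$ and contradict maximality) and, crucially, the real obstacle: after the relocation the Perron vector $\mathbf{x}$ of $A$ fails as a test vector at the source row, so Lemma \ref{rho(A)>k} cannot be applied naively and a raw inequality $p_{A'}(\mathbf{x})>p_A(\mathbf{x})$ does not by itself give $\rho(A')>\rho(A)$. However, your proposed repair --- symmetrize the relocated tensor and invoke Lemma \ref{comparewithsymmetric} --- does not go through for general $l$ in the stated range. The support of a symmetric $\{0,1\}$-tensor is a union of full $\mathfrak{S}_r$-orbits, and any orbit lying outside $J^r_k$ whose entries make a non-negligible contribution to $p_B(\mathbf{x})$ (i.e.\ a tuple with exactly one index equal to $k+1$ and the rest in $[k]$) has size at least $r$. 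Hence for $2\le l\le r-1$ there is no symmetric tensor in ${\cal T}^r_{k^r+l}$ containing $J^r_k$ whose extras sit on such positions; the only ways to hit the count $k^r+l$ exactly (diagonal singleton orbits $a_{(k+j)\cdots(k+j)}$, or deleting diagonal orbits from $J^r_k$ to compensate for an orbit of size $r$) contribute terms of order $x_{k+1}^r=O(k^{-r-1})$ or cost terms of order $x_j^r\approx k^{-1}$, both of which destroy rather than preserve the $p$-advantage. The phrase ``re-balancing from the short candidate list so the total count stays at $e$'' is exactly where the argument breaks, and no amount of the magnitude estimates $x_i\approx k^{-1/r}$ vs.\ $x_s=O(k^{-1-1/r})$ rescues it. (A secondary issue: in your treatment of condition 1, the Perron equations for rows $k+1$ and $k+2$ both read $\lambda x_{k+1}^{r-1}=x_1^{r-1}=\lambda x_{k+2}^{r-1}$ when each of those slices carries only the entry $a_{\cdot 1\cdots 1}$, so the strict gap $x_{k+1}>x_{k+2}$ you rely on is not available.)

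The paper closes the gap by a different device: after moving the one to form $B$, it builds a \emph{modified} test vector $\mathbf{y}$ from $\mathbf{x}$ --- e.g.\ deleting the coordinate $x_{k+2}$ in Item 1, rescaling $y_{k+1}=2^{-1/(r-1)}x_{k+1}$ in Item 2, or setting $y_2=\lambda^{-1/(r-1)}z$ and $y_1=x_1+x_2-y_2$ in Item 3 --- chosen so that the eigen-inequality is restored (and made strict somewhere) at every row of the non-symmetric tensor $B$, and then applies Lemma \ref{rho(A)>k} directly to $(B,\mathbf{y})$. If you replace your symmetrization step with such a coordinate perturbation at the source row and verify the resulting system $B\mathbf{y}^{r-1}\geq\lambda\mathbf{y}^{[r-1]}$ entrywise, your outline becomes essentially the paper's proof.
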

The details of the proof for Lemma \ref{threeconditionsforl>0} are in Appendix. 
By above analysis and  Lemma \ref{threeconditionsforl>0}, one can easily verify Item 1 and Item 2 in Theorem \ref{completlycharctersmalll}.  \\

For $e=k^r-l$, we have proved that the dimension of $A$ with $e$ ones is exactly $k$ (see Theorem \ref{demensioniskfonegl}), thus we only need to 
determine the positions for punching $l$ 0's in $A$.
For $l=1$ or $l=r+1$, we have the following results for part of Item 3 in Theorem \ref{completlycharctersmalll}.
\begin{corollary} \label{maximumtensor}
Let $r\geq 3$, $k\geq 1$ be positive integers.
\begin{enumerate}
\item Let $e=k^r-1$, the maximum tensor in ${\cal{T}}_e^r $ is obtained from $J_k^r$ by putting zero 
at $a_{kk\cdots k}$. 
\item Let $e=k^r-r-1$, the maximum tensor in ${\cal{T}}_e^r $ is obtained from $J_k^r$ by  placing zeros 
at $a_{(k-1)k\cdots k}$ and $a_{k(k-1)\cdots k},  \cdots, a_{kk\cdots (k-1)}$ and $a_{kk\cdots k}$. 
\end{enumerate}
\end{corollary}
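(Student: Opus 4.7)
The plan is, for each item, to compare any claimed maximum tensor $A$ with the specific symmetric candidate $B$ described in the corollary, and to conclude via Lemma \ref{comparewithsymmetric} that $\rho(A)<\rho(B)$ unless the zero positions of $A$ match those of $B$ up to the allowed equivalences.

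For Item 1, Theorem \ref{demensioniskfonegl} forces $\dim A=k$, so $A=J_k^r-e_p$ for a single zero at some position $p=(p_1,\ldots,p_r)\in[k]^r$. Take $B=J_k^r-e_{(k,\ldots,k)}$, which is symmetric because the modification is purely diagonal. By Lemma \ref{maximalisweaklyirreducible}, $A$ is weakly irreducible, so its Perron-Frobenius eigenvector $\mathbf{x}$ is strictly positive. Writing out the row equations of $A$, every $j\neq p_1$ satisfies the same equation, forcing a common value $x_j=c$; the slice $A_{p_1}$ is missing the entry at $(p_2,\ldots,p_r)$, forcing $x_{p_1}<c$. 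After permuting vertices so that $x_1\geq\cdots\geq x_k$, we have $p_1=k$ and $x_k=x_{p_1}<c$. Then
$$p_B(\mathbf{x})-p_A(\mathbf{x})=x_{p_1}x_{p_2}\cdots x_{p_r}-x_k^r\geq 0,$$
and a strict inequality would give $\rho(A)<\rho(B)$ by Lemma \ref{comparewithsymmetric}, contradicting maximality. Equality requires each $x_{p_i}=x_k$, and since $x_j>x_k$ for every $j\neq k$, this forces $p_i=k$ for all $i$.

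For Item 2, the candidate $B$ is the tensor with the $r+1$ zeros listed in the statement; these positions form the union of the $\mathfrak{S}_r$-orbit of $(k,k-1,k,\ldots,k)$ (of size $r$) with the diagonal position $(k,\ldots,k)$, so $B$ is symmetric. Again $\dim A=k$ by Theorem \ref{demensioniskfonegl}, so $A$ is $J_k^r$ with zeros at some positions $q_1,\ldots,q_{r+1}$. With $\mathbf{x}$ the eigenvector of $A$ and vertices relabeled so that $x_1\geq\cdots\geq x_k$, one computes
$$p_B(\mathbf{x})-p_A(\mathbf{x})=\sum_{j=1}^{r+1}\prod_{i=1}^r x_{q_j^i}-\bigl(r\,x_{k-1}x_k^{r-1}+x_k^r\bigr).$$
The key claim is that the $r+1$ smallest coordinate products over $[k]^r$ are precisely $x_k^r$ together with the $r$ products of the form $x_{k-1}x_k^{r-1}$ obtained by placing $k-1$ in each of the $r$ coordinates. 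Granting this, $p_B(\mathbf{x})\leq p_A(\mathbf{x})$; combined with $\rho(A)\geq\rho(B)$ and Lemma \ref{comparewithsymmetric}, equality is forced and the zero positions of $A$ coincide with those of $B$ up to vertex permutations preserving the eigenvector ordering, yielding $A\equiv B$.

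The main obstacle is the uniqueness of the minimizing set in Item 2: one must establish the strict separation $x_{k-2}>x_{k-1}>x_k$ in $A$'s eigenvector (so that the $r+1$ smallest coordinate products are unambiguously those associated with $B$'s zero positions), handling the case where $A$'s eigenvector has ties at $x_{k-2}=x_{k-1}$. I expect to deal with this by iterating the row-equation analysis from Item 1 together with Theorem \ref{maxima}'s slice-ordering: ties in $\mathbf{x}$ correspond to vertex permutations that fix $A$'s zero layout, so any zero configuration distinct from $B$'s even modulo equivalence produces an eigenvector whose $r+1$ smallest coordinate products strictly exceed $r\,x_{k-1}x_k^{r-1}+x_k^r$ at the chosen zeros, delivering the contradiction needed to invoke Lemma \ref{comparewithsymmetric}.
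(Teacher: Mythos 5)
Your proposal follows the same route as the paper's own (very terse) proof: both compare an arbitrary competitor against the symmetric candidate $B$ via Lemma \ref{comparewithsymmetric}, using that $B$'s zeros occupy exactly the positions where the coordinate products of the competitor's decreasingly-sorted Perron--Frobenius eigenvector are smallest, so $p_{A}(\mathbf{x})\leq p_{B}(\mathbf{x})$ and maximality forces equality. You are in fact more thorough than the paper: the row-equation analysis of the equality case in Item 1 and the flagged tie-handling ($x_{k-2}=x_{k-1}$) in Item 2 address the uniqueness assertion, which the paper's one-line argument does not discuss at all, and your outlined resolution of the ties via the slice equations is sound.
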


\begin{proof}
The indicated tensor $A$ in each case is a symmetric tensor in ${\cal{T}}_e^r $, 
for any other tensor $A'\in {\cal T}^r_e$, let $\mathbf{x}$ be the vector corresponding to $\rho(A')$
with $x_1\geq x_2\geq \cdots \geq x_{k}.$ By comparing two formulas $\rho_{A'}(x)$ and $\rho_{A}(x)$, 
we can see that $\rho_{A'}(x)\leq \rho_{A}(x)$. 
Thus by Lemma \ref{comparewithsymmetric}, 
$\rho(A')\leq \rho(A)$ .
\end{proof}

For $2\leq l \leq r$, we need to prove the following lemma:
\begin{lemma}\label{0atlastslicewithotherslice}
Let  $A\in {\cal T}^r_e$  be the tensor that `0' elements appear at the end of  slices $A_{k-1}$ and $A_k$, let $B\in {\cal T}^r_e$ be the tensor that `0' elements only appear at the end of slice $A_{k}$.
Then $\rho(B)\geq \rho(A)$. 
\end{lemma}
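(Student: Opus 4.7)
The plan is to use the Perron--Frobenius eigenvector $\mathbf{x}$ of $A$ and apply Lemma~\ref{rho(A)>k} to a carefully perturbed version of it. By Theorem~\ref{demensioniskfonegl} both $A$ and $B$ are $k$-dimensional; by Theorem~\ref{maxima}, all zeros sit at dictionary-order tails. Write $l_1+l_2=l$ with $l_1\geq 1$ for the numbers of zeros in slices $A_{k-1}$ and $A_k$ respectively. Because slices $A_1,\ldots,A_{k-2}$ are identical all-ones blocks, the eigenvector equation for $A$ forces $x_1=\cdots=x_{k-2}$; denote this common value $a$ and let $b=x_{k-1}$, $c=x_k$, with $a\geq b$ and $c>0$.

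A direct row-by-row computation of $(B\mathbf{x}^{r-1})_i-\rho(A)\,x_i^{r-1}$ gives $0$ for $i\leq k-2$, the slack $\rho(A)(a^{r-1}-b^{r-1})\geq 0$ at $i=k-1$ (slice $B_{k-1}$ is full while $A_{k-1}$ is not), and the lone wrong-sign deficit $-\,l_1\,c^{r-2}\,a<0$ at $i=k$. To close the gap at row $k$, I would perturb to $\tilde x_i=a-\epsilon$ for $i\leq k-2$, $\tilde x_{k-1}=b$, $\tilde x_k=c-\delta$, with small $\epsilon,\delta>0$. A first-order analysis at $(\epsilon,\delta)=(0,0)$ shows the rows $i\leq k-2$ inequality is preserved provided
\[
\epsilon\,(b+c)\;\geq\;a\,\delta,
\]
while the row $k$ deficit shrinks at rate proportional to $(r-1)[(1+\rho(A))\,c^{r-2}-S_x^{r-2}]\,\delta$ in $\delta$ (plus an $O(\epsilon)$ correction), with $S_x=(k-2)a+b+c$. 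The leading $\delta$-coefficient is strictly positive because $\rho(A)\,c^{r-2}=S_x^{r-1}/c$ dominates $S_x^{r-2}$ by a factor $S_x/c$ of order $k$. Combining these linear constraints yields an open cone of feasible $(\epsilon,\delta)$ that simultaneously closes the row $k$ deficit and preserves the other rows; with $B\tilde{\mathbf{x}}^{r-1}\geq\rho(A)\,\tilde{\mathbf{x}}^{[r-1]}$ holding componentwise, Lemma~\ref{rho(A)>k} yields $\rho(B)\geq\rho(A)$.

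The main obstacle is the quantitative feasibility of the perturbation: one must show that the first-order linear system is consistent (the slope fixing row $k$ dominates the cost imposed by the row $i\leq k-2$ constraint) and that the second-order corrections do not reverse any inequality at the chosen $(\epsilon,\delta)$. This reduces to an explicit comparison between $\rho(A)$, $S_x$, $a$, $b$, $c$, which I expect to verify using the asymptotics $\rho(A)=k^{r-1}+O(l/k)$ and $a,b,c=\Theta(k^{-1/r})$ established in Theorem~\ref{principalsubtensorJkr}, together with the hypothesis $l_1\geq 1$. A small modification handles the boundary case $l_2=0$ (slice $A_k$ full, eigenvector satisfies $x_k=a$), where the perturbation is instead concentrated at a different coordinate but the same Collatz--Wielandt finish applies.
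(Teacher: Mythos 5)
Your overall strategy---perturb the Perron vector $\mathbf{x}$ of $A$ and feed the perturbed vector into Lemma~\ref{rho(A)>k} for $B$---is exactly the paper's strategy, and your row-by-row accounting of where the slack and the deficit sit is correct. But the proof is not complete, and the part you defer is precisely the part that cannot be waved away. With your perturbation ($\tilde x_i=a-\epsilon$ for $i\le k-2$, $\tilde x_{k-1}=b$, $\tilde x_k=c-\delta$) there are \emph{three} families of first-order constraints, and you only record two of them. The one you omit is row $k-1$: there $\tilde x_{k-1}$ is unchanged, so the right-hand side $\rho(A)b^{r-1}$ is fixed, while the left-hand side $\tilde S^{r-1}$ drops by $(r-1)S^{r-2}\bigl((k-2)\epsilon+\delta\bigr)$; this loss must be absorbed by the slack $t\,b\,c^{r-2}+c^{r-1}$. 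Plugging in the minimal $\delta\approx (t+1)b/\bigl((r-1)\rho(A)\bigr)$ needed to close the row-$k$ deficit and the minimal $\epsilon = a\delta/(b+c)$ forced by your rows-$\le k-2$ constraint, the loss at row $k-1$ comes out to $a^{r-1}(t+1)b/(b+c)$, which is of \emph{exactly the same order} as the available slack (both are $\Theta\bigl((t+1)k^{-(r-1)/r}\bigr)$); feasibility survives only because $a/(b+c)\to 1/2$, i.e.\ by a constant factor close to $2$. So there is no ``open cone'' of feasible $(\epsilon,\delta)$ in any soft sense: the feasible region is a bounded sliver whose nonemptiness is the whole content of the lemma, and your write-up neither states the binding constraint nor performs the comparison.

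For contrast, the paper sidesteps this three-way budget by perturbing only the last two coordinates in a \emph{sum-preserving} way: it sets $y_i=x_i$ for $i\le k-2$, chooses $y_{k-1}=\lambda^{-1/(r-1)}z$ so that row $k-1$ of $B$ becomes an exact equality, and puts $y_k=x_{k-1}+x_k-y_{k-1}$. Since $\sum y_i=\sum x_i=z$, rows $1,\dots,k-1$ are automatically tight, and the entire lemma reduces to a single scalar inequality at row $k$ (inequality~(\ref{star})), which is then checked asymptotically using $w=x_{k-1}/x_k$ and $\lambda(w^{r-1}-1)=(s-t)w$. If you want to keep your version, you must (i) add the row-$(k-1)$ constraint to your linear system and (ii) carry out the constant comparison above, including a check that second-order terms do not consume the factor-of-$2$ margin; otherwise I would recommend switching to the sum-preserving perturbation, which eliminates two of the three constraints by construction.
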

The details of the proof of Lemma \ref{0atlastslicewithotherslice} can be found at Appendix.

Now let us we prove Theorem \ref{completlycharctersmalll}, Item 3.
\begin{proof}[Proof of Theorem \ref{completlycharctersmalll}, Item 3. ]
The idea in Lemma \ref{0atlastslicewithotherslice} is to compare the tensor when `0' elements only appear at the slice $A_k$ with tensor when `0' elements also appear at  slice $A_{k-1}$. 
Following this idea in Lemma \ref{0atlastslicewithotherslice},  we repeatedly 
compare the tensor when `0' elements only appear at slice $A_k$ with tensor when `0' elements also appear at slice $A_{k-i}$, for $i\geq 2$. There are only finite cases. It is tedious to include all computations
here.  The proof and result for each comparing is similar with the proof of 
Lemma \ref{0atlastslicewithotherslice}. In the end we conclude: For a maximum tensor $A$, the $l$ `0' elements can only appear at slice $A_k$. The proof is complete. 
\end{proof}

\section{Appendix}

\begin{proof}[Proof of Lemma \ref{threeconditionsforl>0}]

\textbf{For Item 1:}
Suppose $a_{(k+2)11\cdots 1}=1$,
by Corollary \ref{compareiandi+1}, $a_{(k+1)11\cdots 1}=1$.
By Lemma \ref{maximalisweaklyirreducible}, $A$ is a weakly irreducible tensor,  
then $a_{1(k+2)\cdots 1)}$ must be $1$.
Let $$R=\{a_{1(k+1)\cdots 1}, a_{11(k+1)\cdots 1}, \ldots, a_{11\cdots (k+1)}\}.$$  
The above assumption will force each element in set $R$ is one, then the number of ones outside of $J_k^r$ 
would be $3+r-1=r+2\geq l$, a contradiction. 
To see this, 
assume there are $s< |R|=r-1$ ones in set $R$. 
Let $\lambda$ be the largest eigenvalue of $A$, $\mathbf{x}$ be the corresponding eigenvector, 
with $x_1\geq x_2\geq \cdots \geq x_{k+2}.$ 
Then we have
\begin{align*}
&\lambda x_1^{r-1}=(x_1+\cdots +x_k)^{r-1}+ s x_{k+1}x_1^{r-2}+ x_{k+2}x_1^{r-2}\\\
&\lambda x_2^{r-1}=(x_1+\cdots +x_k)^{r-1}\\
&\cdots\\
&\lambda x_k^{r-1}=(x_1+\cdots +x_k)^{r-1}\\
&\lambda x_{k+1}^{r-1}=x_1^{r-1}\\
&\lambda x_{k+2}^{r-1}=x_1^{r-1}.
\end{align*}
Note that $x_1$ is strictly greater than $x_2$. 
Let $B$ be a new tensor obtained from $A$ by
moving `1' from $a_{(k+2)1\cdots 1}$ to one of `0' elements in set $R$. 
Let $\mathbf{y}$ be a $(k+1)-$vector obtained from $\mathbf{x}$ such that $y_i=x_i$, for $1\leq i\leq k+1$. 
By comparing $B\mathbf{y}^{r-1}$ and $\lambda \mathbf{y}^{r-1}$, we have the following system:
\begin{align*}
&\lambda y_1^{r-1}<(y_1+\cdots +y_k)^{r-1}+ (s+1) y_{k+1}y_1^{r-2}\\\
&\lambda y_2^{r-1}=(y_1+\cdots +y_k)^{r-1}\\
&\lambda y_3^{r-1}=(y_1+\cdots +y_k)^{r-1}\\
&\cdots\\
&\lambda y_{k+1}^{r-1}=y_1^{r-1}.
\end{align*}
Thus $B\mathbf{y}^{r-1}\geq \lambda \mathbf{y}^{r-1}$.
By Lemma \ref{rho(A)>k}, we have 
$\rho(B)>\lambda$, a contradiction.
Therefore $a_{(k+2)11\cdots 1}$ must be $0$,  it follows the dimension of $A$ is at most $k+1$.

\textbf{For Item 2:}
Suppose $a_{(k+1)12\cdots 1}=0$. 
Let $\lambda$ be the largest eigenvalue of $A$, $\mathbf{x}$ be the corresponding eigenvector, 
with $x_1\geq x_2\geq \cdots \geq x_{k+1}.$
Then 
\begin{align*}
&\lambda x_1^{r-1}=(x_1+\cdots +x_k)^{r-1}+ (l-2)x_{k+1}x_1^{r-2}\\
&\lambda x_2^{r-1}=(x_1+\cdots +x_k)^{r-1}\\
&\cdots\\
&\lambda x_k^{r-1}=(x_1+\cdots +x_k)^{r-1}\\
&\lambda x_{k+1}^{r-1}=x_1^{r-1}+ x_1^{r-2}x_2.
\end{align*}
Let $B$ be a tensor obtained from $A$ by moving `1' from $a_{(k+1)12\cdots 1}$ to some `0' elements in set $R$, here $R$ is defined as above. 
Let $\mathbf{y}$ be a new vector obtained from $\mathbf{x}$ such that $y_i=x_i$ for $1\leq i\leq k$,
 and $y_{k+1}=2^{-\frac{1}{r-1}}x_{k+1}$. 
Then we have 
 \begin{align*}
&\lambda y_1^{r-1}<(y_1+\cdots +y_k)^{r-1}+ (l-1)y_{k+1}y_1^{r-2}\\
&\lambda y_2^{r-1}=(y_1+\cdots +y_k)^{r-1}\\
&\lambda y_3^{r-1}=(y_1+\cdots +y_k)^{r-1}\\
&\cdots\\
&\lambda y_{k+1}^{r-1}<y_1^{r-1}.
\end{align*}
To see above system, we only need to verify the first and the last inequalities.  
Note that $x_2< x_1$, then
$$\lambda y_{k+1}^{r-1}=  \lambda (2^{-\frac{1}{r-1}}x_{k+1})^{r-1} =\frac{1}{2}x_1^{r-1}+ 
\frac{1}{2}x_1^{r-2}x_2 < x_1^{r-1}=y_1^{r-1}. $$
For the first inequality, we need to show that 
$$\lambda y_1^{r-1}=(x_1+\cdots +x_k)^{r-1}+ (l-2)x_{k+1}x_1^{r-2} 
<(y_1+\cdots +y_k)^{r-1}+ (l-1)y_{k+1}y_1^{r-2}. $$
It is equivalent to show
$$l-2 < 2^{-\frac{1}{r-1}}(l-1).$$
Let $f(r)=\frac{l-2}{l-1}-2^{-\frac{1}{r-1}}$, it is decreasing on $r$, 
then $f(r)\leq f(l)$. Since $f(l)$ is increasing on $l$, and $\lim_{l\rightarrow \infty}f(l)=0$, then 
$f(r)<0$ for all $r\geq l$. 
Now we have $B\mathbf{y}^{r-1}\geq \lambda \mathbf{y}^{r-1}$, 
by Lemma \ref{rho(A)>k}, we get $\rho(B)> \lambda$. A contradiction.

\textbf{For Item 3:}
Without loss of generality, 
we assume there are $s$ ones in $A_1$ and $t$ ones in $A_2$. 
Let $\lambda$ be the largest eigenvalue of $A$, $\mathbf{x}$ be the corresponding eigenvector, 
with $x_1\geq x_2\geq \cdots \geq x_{k+1}.$
Then 
\begin{align*}
&\lambda x_1^{r-1}=(x_1+\cdots +x_k)^{r-1}+ sx_{k+1}x_1^{r-2}\\
&\lambda x_2^{r-1}=(x_1+\cdots +x_k)^{r-1}+ tx_{k+1}x_1^{r-2}\\
&\lambda x_3^{r-1}=(x_1+\cdots +x_k)^{r-1}\\
&\cdots\\
&\lambda x_{k+1}^{r-1}=x_1^{r-1}.
\end{align*}
Replace $x_{k+1}$ by $x_1$, and let $z=(x_1+\cdots +x_k)$,  the above system is equivalent to the following:
\begin{align*}
&\lambda x_1^{r-1}=z^{r-1}+ s \lambda^{-\frac{1}{r-1}}x_1^{r-1}\\
&\lambda x_2^{r-1}=z^{r-1}+ t\lambda^{-\frac{1}{r-1}}x_1^{r-1}\\
&\lambda x_3^{r-1}=z^{r-1}\\
&\cdots\\
&\lambda x_k^{r-1}=z^{r-1}.
\end{align*}
Solve $x_1$ and $x_2$ in above system, we get 
$$x_1=(\lambda-s\lambda^{-\frac{1}{r-1}})^{-\frac{1}{r-1}}z,  \ x_2=(\frac{1-(s-t)\lambda^{-\frac{r}{r-1}}}{\lambda-s\lambda^{-\frac{1}{r-1}}})^{\frac{1}{r-1}}z. $$

Let $B$ be a tensor obtained from $A$ by moving these $t$ `0' elements in $A_2$ to set $R$. 
Still apply Lemma \ref{rho(A)>k},
we want to find a new vector  $\mathbf{y}$
such that $B\mathbf{y}^{r-1} \geq \lambda \mathbf{y}^{r-1}. $
I.e.
\begin{equation}\label{By>lambday}
\begin{split}
&\lambda y_1^{r-1}\leq z^{r-1}+ (s+t) \lambda^{-\frac{1}{r-1}}y_1^{r-1}\\ 
&\lambda y_2^{r-1}=z^{r-1}\\
&\lambda y_3^{r-1}=z^{r-1}\\
&\cdots\\
&\lambda y_k^{r-1}=z^{r-1}\\
&\lambda y_{k+1}^{r-1}=y_1^{r-1}.
\end{split}
\end{equation}

Let $y_i=x_i$ for $3\leq i\leq k+1$. 
Let $y_2= \lambda^{-\frac{1}{r-1}}z$, then
$\lambda y_2^{r-1}= z^{r-1}$.

Let $y_1=x_1+x_2-y_2$, to verify the first inequality we need to show
$y_1< (\lambda-(s+t)\lambda^{-\frac{1}{r-1}})^{-\frac{1}{r-1}}z$. 
I.e.
\begin{align*}
&\left((\lambda-s\lambda^{-\frac{1}{r-1}})^{-\frac{1}{r-1}}\right)z + \left(\frac{1-(s-t)\lambda^{-\frac{1}{r-1}}}{\lambda-s\lambda^{-\frac{1}{r-1}}}\right)^{\frac{1}{r-1}}z - \lambda^{-\frac{1}{r-1}}z \\
& <  \left(\lambda-(s+t)\lambda^{-\frac{1}{r-1}}\right)^{-\frac{1}{r-1}}z. 
\end{align*}
After divided by $\lambda^{-\frac{1}{r-1}}z$ from both sides and further simplification by letting $w=\lambda^{-\frac{r}{r-1}}>0$,  
the above inequality is equivalent to 
\begin{align*}
F(w)=\left\lbrace(1-sw)^{-\frac{1}{r-1}} -(1-(s+t)w)^{-\frac{1}{r-1}}\right\rbrace+ \left\lbrace \left(1+\frac{tw}{1-sw}\right)^{\frac{1}{r-1}}-1\right\rbrace< 0, 
\end{align*}

By Cauchy's Mean Value Theorem, we have 
\begin{align*}
F(w)=-\frac{1}{r-1}(1-\alpha)^{-\frac{1}{r-1}-1} (tw) + 
\frac{1}{r-1}(1+ \beta)^{\frac{1}{r-1}-1} \frac{tw}{1-sw} .
\end{align*}
where $sw<\alpha < (s+t)w$ and $0< \beta< \frac{tw}{1-sw}$.
Since $-(1-\alpha)^{-\frac{1}{r-1}-1}$ and $(1+ \beta)^{\frac{1}{r-1}-1}$ are decreasing functions 
on $\alpha$ and $\beta$ respectively, 
we have
\begin{align*}
F(w)& < -\frac{1}{r-1}(1-sw)^{-\frac{1}{r-1}-1} (tw) + 
\frac{1}{r-1}(1+ 0)^{\frac{1}{r-1}-1} \frac{tw}{1-sw}\\
&=  -\frac{1}{r-1} \frac{tw}{(1-sw)^{\frac{r}{r-1}}} + \frac{1}{r-1} \frac{tw}{1-sw}\\
&= \frac{tw}{(r-1)(1-sw)^{\frac{r}{r-1}}} \left((1-sw)^{\frac{1}{r-1}}-1\right)\\
& <0.
\end{align*}
Now the system (\ref{By>lambday}) is verified, by Lemma \ref{rho(A)>k}, we have 
$\rho(B)>\lambda $. A contradiction. 
\end{proof}

\begin{proof}[Proof of Lemma \ref{0atlastslicewithotherslice}.]
Let $A$ and $B$ be given tensors as stated in the lemma. 
Suppose there are $t+1$ zeros in $A_{k-1}$ and $s+1$ zeros in $A_{k}$, i.e. 
Suppose there are $t$ zeros in the set of $\{a_{(k-1)(k-1)k\cdots k}, \ldots,a_{(k-1)kk\cdots (k-1)}\}$ and one zero at $a_{(k-1)kk\cdots k}$ in slice $A_{k-1}$;  
and there are $s$ zeros in the set of 
$\{a_{k(k-1)k\cdots k}, \ldots,a_{kkk\cdots (k-1)}\}$ and one zero at $a_{kkk\cdots k}$ in slice $A_{k}$.

Let $\lambda$ be largest eigenvalue of $A$, $\mathbf{x}$ be the corresponding eigenvector with $x_1\geq x_2\geq \cdots \geq x_{k}.$
Then $s+t+2=l$, and $s\geq t\geq 0$. 
We have
\begin{align*}
&\lambda x_1^{r-1}=(x_1+\cdots +x_k)^{r-1}\\
&\ldots \\
&\lambda x_{k-2}^{r-1}=(x_1+\cdots +x_k)^{r-1}\\
&\lambda x_{k-1}^{r-1}=(x_1+\cdots +x_k)^{r-1}- tx_{k-1}x_k^{r-2}-x_k^{r-1}\\
&\lambda x_{k}^{r-1}=(x_1+\cdots +x_k)^{r-1}- sx_{k-1}x_k^{r-2}-x_k^{r-1}.
\end{align*}
From the last two equations, we get
$\lambda (x_{k-1}^{r-1}- x_{k}^{r-1})=(s-t)x_{k-1}x_k^{r-2}$. 
Let $w=\frac{x_{k-1}}{x_k}$, we have $\lambda(w^{r-1}-1)=(s-t)w$.
Let $z=(x_1+\cdots +x_k)$,
since $\lambda x_{k}^{r-1}=z^{r-1}- (sw+1)x_{k}^{r-1}$, then $ x_{k}=(\lambda+sw+1)^{-\frac{1}{r-1}}z$
and $ x_{k-1}=w(\lambda+sw+1)^{-\frac{1}{r-1}}z$. 

Note $B$ is the tensor with all zeros in the following set 
$$\{a_{k(k-1)k\cdots k}, \ldots,a_{kkk\cdots (k-1)}, a_{kkk\cdots k}\}.$$

Still apply Lemma \ref{rho(A)>k},
we want to find a new vector  $\mathbf{y}$
such that $B\mathbf{y}^{r-1} \geq \lambda \mathbf{y}^{r-1}. $
Specifically, 
\begin{equation}\label{2By>lambday}
\begin{split}
&\lambda y_1^{r-1}=(y_1+\cdots +y_k)^{r-1}\\ 
& \ldots\\
&\lambda y_{k-2}^{r-1}=(y_1+\cdots +y_k)^{r-1}\\
&\lambda y_{k-1}^{r-1}= (y_1+\cdots +y_k)^{r-1}\\
&\lambda y_k^{r-1}<(y_1+\cdots +y_k)^{r-1}-(s+t+1)y_{k-1}y_k^{r-2}-y_k^{r-1}.
\end{split}
\end{equation}

Let $y_i=x_i$ for $1\leq i\leq k-2$. 
Let $y_{k-1}= \lambda^{-\frac{1}{r-1}}z$, then
$\lambda y_{k-1}^{r-1}= z^{r-1}$. 
Let $y_k=x_k+x_{k-1}-y_{k-1}=(\lambda+sw+1)^{-\frac{1}{r-1}}z (1+w)- \lambda^{-\frac{1}{r-1}}z$.

Clearly, $\lambda y_i^{r-1}=\lambda x_i^{r-1}=x_1+\cdots +x_k=y_1+\cdots +y_k$, for $i\leq k-1$.   

We only need to verify the last inequality in system (\ref{2By>lambday}). 
I.e.
\begin{align*}
&\lambda\left((\lambda+sw+1)^{-\frac{1}{r-1}}z (1+w)- \lambda^{-\frac{1}{r-1}}z\right)^{r-1} 
\\ 
& +(s+t+1)\lambda^{-\frac{1}{r-1}}z \left((\lambda+sw+1)^{-\frac{1}{r-1}}z (1+w)- \lambda^{-\frac{1}{r-1}}z\right)^{r-2} \\
& < z^{r-1}. 
\end{align*}
After divided by $z^{r-1}$,  we have
\begin{align} \label{star}
&\lambda\left((\lambda+sw+1)^{-\frac{1}{r-1}}(1+w)- \lambda^{-\frac{1}{r-1}}\right)^{r-1} \\ \nonumber
& +(s+t+1)\lambda^{-\frac{1}{r-1}} \left((\lambda+sw+1)^{-\frac{1}{r-1}} (1+w)- \lambda^{-\frac{1}{r-1}}\right)^{r-2} \\\nonumber
& < 1\nonumber. 
\end{align}
Since $w=\frac{x_{k-1}}{x_k}\geq 1$, when $s=t$, $w=1$,   it is easy to verify that 
the left hand-side of (\ref{star}) is increasing on $\lambda$ and goes to $1$ as $\lambda\rightarrow \infty$.  Thus inequality (\ref{star}) is verified. We consider the case $s>t$,  
so $w> 1$. Let $w=1+\epsilon$, we have 
\begin{align*}
 \lambda((1+\epsilon)^{r-1}-1)=(s-t)(1+\epsilon)
\end{align*}
Solve for $\epsilon$, we get $\epsilon\approx \frac{s-t}{\lambda(r-1)}$,  
then $w\approx 1+ \frac{s-t}{\lambda(r-1)}$ . 
Then
\begin{align*}
Y& =(\lambda+sw+1)^{-\frac{1}{r-1}}(1+w)- \lambda^{-\frac{1}{r-1}}\\
&= \lambda^{-\frac{1}{r-1}}\left( \frac{1+w}{(1+\frac{sw+1}{\lambda})^{\frac{1}{r-1}}}-1\right)\\
& \approx \lambda^{-\frac{1}{r-1}} \left(w-\frac{(sw+1)(1+w)}{\lambda(r-1)}\right)\\
& \leq  \lambda^{-\frac{1}{r-1}} \left(w-\frac{2(s+1)}{\lambda(r-1)}\right)\\
& \approx \lambda^{-\frac{1}{r-1}} \left( 1-   \frac{s+t+2}{\lambda(r-1)}\right). 
\end{align*}
Then insert $Y$ to the left hand-side of (\ref{star}), we get
\begin{align*}
&\lambda Y^{r-1}+(s+t+1)\lambda^{-\frac{1}{r-1}} Y^{r-2}\\
& =  \left( 1-   \frac{s+t+2}{\lambda(r-1)}\right)^{r-1} + \frac{(s+t+1)}{\lambda}\left( 1-   \frac{s+t+2}{\lambda(r-1)}\right)^{r-2} \\
& \approx \left( 1-   \frac{s+t+2}{\lambda}\right) +  \frac{(s+t+1)}{\lambda}\left( 1-   \frac{(s+t+2)(r-2)}{\lambda(r-1)}\right)+ O(\frac{1}{\lambda^2})\\
& = 1-   \frac{1}{\lambda} + O(\frac{1}{\lambda^2})\\
& <1.
\end{align*}
Thus inequality (\ref{star}) is verified. By Lemma \ref{rho(A)>k}, we have 
$\rho(B)\geq \lambda $. A contradiction.
\end{proof}

\end{document}